\documentclass[12pt]{amsart}

\usepackage{amsmath,latexsym,amssymb,verbatim,euscript,graphicx}
\usepackage{fullpage}
\usepackage[text={6.5in,9in},centering]{geometry}
\usepackage{mathabx}
\usepackage[thinlines]{easymat}
\newcommand{\R}{\mathbb{R}}

\newcommand{\II}{\operatorname{II}}

\newcommand{\e}{\mathbf{e}}
\newcommand{\bfh}{\mathbf{h}}
\newcommand{\bfu}{\mathbf{u}}
\newcommand{\bff}{\mathbf{f}}
\newcommand{\bfv}{\mathbf{v}}
\newcommand{\bfw}{\mathbf{w}}
\newcommand{\bfx}{\mathbf{x}}
\newcommand{\bfy}{\mathbf{y}}
\newcommand{\bfz}{\mathbf{z}}
\newcommand{\scE}{{\EuScript E}}

\newtheorem{ourtheorem}{Theorem}
\newtheorem{theorem}{Theorem}[section]
\newtheorem{non-theorem}{Non-Theorem}[section]
\newtheorem{lemma}[theorem]{Lemma}

\newtheorem{proposition}[theorem]{Proposition}
\newtheorem{theoremvoid}{Theorem} 
{\theoremstyle{definition}
\newtheorem{example}[theorem]{Example}
\newtheorem{definition}[theorem]{Definition}
\newtheorem{remark}[theorem]{Remark}
\newtheorem{assumption}[theorem]{Assumption}
\newtheorem{notation}[theorem]{Notation}

\setcounter{MaxMatrixCols}{20}

\numberwithin{equation}{section}

\begin{document}

\title[Isometric embedding]{Isometric embedding via strongly symmetric positive systems}

\author{Gui-Qiang Chen}
\address{Mathematical Institute, University of Oxford, Oxford, OX2 6GG, UK}
\email{chengq@maths.ox.ac.uk}

\author{Jeanne Clelland}
\address{Department of Mathematics, 395 UCB, University of
Colorado, Boulder, CO 80309-0395, USA}
\email{Jeanne.Clelland@colorado.edu}

\author{Marshall Slemrod}
\address{Department of Mathematics, University of Wisconsin, Madison, WI 53706, USA}
\email{slemrod@math.wisc.edu}

\author{Dehua Wang}
\address{Department of Mathematics, University of Pittsburgh, Pittsburgh, PA 15260, USA}
\email{dwang@math.pitt.edu}

\author{Deane Yang}
\address{Department of Mathematics, New York University, New York, NY 10012, USA}
\email{deane.yang@nyu.edu}

\subjclass[2010]{Primary(53B20, 53C42), Secondary(35F50)}
\keywords{isometric embedding, strongly symmetric positive systems}

\begin{abstract}
We give a new proof for the local existence of a smooth isometric embedding of a smooth $3$-dimensional Riemannian manifold with nonzero Riemannian curvature tensor
into $6$-dimensional Euclidean space.  Our proof avoids the sophisticated arguments via microlocal analysis used in earlier proofs.

In Part 1, we introduce a new type of system of partial differential equations (PDE), which is not one of the standard types (elliptic, hyperbolic, parabolic)
but satisfies a property called strong symmetric positivity.
Such a PDE system is a generalization of and has properties similar to a system of ordinary differential equations with a regular singular point.
A local existence theorem is then established by using a novel local-to-global-to-local approach.
%In Part 1, we prove a local existence theorem for nonlinear, first-order PDE systems that do not fall under one of the standard
%types (elliptic, hyperbolic, and parabolic), under an assumption that we call {\em strong symmetric positivity}.
In Part 2, we apply this theorem to prove the local existence result for isometric embeddings.
\end{abstract}

\maketitle

\tableofcontents

\section{Introduction}

Let $(M,g)$ be an $n$-dimensional $C^\infty$ Riemannian manifold.
Recall that a $C^\infty$ map $\bfy: M \rightarrow \R^N$ is called an {\em isometric embedding} if $\bfy$ is injective and
the restriction of the Euclidean metric on $\R^N$ to the image $\bfy(M)$ agrees with the metric $g$ on $M$.
In terms of local coordinates $\bfx = (x^1, \ldots, x^n)$ on $M$, this is equivalent to the condition that
\begin{equation}\label{equation:isometric}
\partial_i\bfy\cdot\partial_j\bfy = g_{ij}, \qquad 1 \leq i,j \leq n,
\end{equation}
where $g = g_{ij} dx^i dx^j$ and $\partial_i$ denotes $\frac{\partial}{\partial x^i}$.

In this paper, we study the local isometric embedding problem, which asks whether, given a Riemannian manifold $(M,g)$ and a point $\bfx_0 \in M$,
there exists an isometric embedding of some neighborhood  of $\bfx_0$ into $\R^N$---i.e., whether the PDE system \eqref{equation:isometric}
has local $C^\infty$ solutions in some neighborhood of $\bfx_0$.
The system \eqref{equation:isometric} consists of $\tfrac{1}{2} n (n+1)$ partial differential equations
for $N$ unknown functions $\bfy = (y^1, \ldots, y^N)$;
thus it is overdetermined when $N < \tfrac{1}{2} n (n+1)$, underdetermined when $N > \tfrac{1}{2} n (n+1)$, and determined when $N = \tfrac{1}{2} n (n+1)$.

The isometric embedding problem has a long and active history.
The famous theorem of Cartan and Janet (see, e.g., \cite{Jacobowitz82}) guarantees that,
when the metric $g$ is real analytic, local real analytic solutions to \eqref{equation:isometric}
always exist in the determined case $N = \tfrac{1}{2} n (n+1)$.
In the $C^\infty$ category, much less is known.  Nash \cite{Nash56} proved a global existence theorem
in the highly underdetermined case $N = \tfrac{1}{2}n (n+1) (3n+11)$.
Later, refinements were given by Greene \cite{Greene70} and Gunther \cite{Gunther89}
for the local existence problem  that improved the upper bound on the embedding dimension to $N = \tfrac{1}{2} n (n+1) + n$.

When $N = \tfrac{1}{2} n (n+1)$, known results for $g$ in the $C^\infty$ category are limited to $n\leq 4$.
Most research activity has been concentrated on the case $n=2$, where local isometric embeddings of
varying regularity have been shown to exist in a neighborhood of any point $\bfx_0 \in M$
where either the Gauss curvature $K(\bfx_0)$ is nonzero, $K(\bfx_0) = 0$ and $\nabla K(\bfx_0) \neq 0$,
or $K(\bfx_0)$ vanishes to finite order in certain precise ways (cf. \cite{Han05, Han06, HHL03, HK10, Khuri07, Lin85, Lin86}).
For a detailed account, see \cite{HH06}.

For $n \geq 3$, there are fewer results.
Bryant, Griffiths, and Yang \cite{BGY83} showed that, for $n=3$, local $C^\infty$ isometric embeddings %$\bfy:\Omega \subset M^3 \to \R^6$
exist in a neighborhood of any point $\bfx_0 \in M$ where the Einstein tensor has rank greater than $1$.
Subsequent work was able to relax this restriction on the Einstein tensor: In \cite{NM89},
Nakamura and Maeda extended the existence theorem to a neighborhood of any point where the Riemann curvature tensor does not vanish,
and in \cite{Poole10}, Poole extended the existence theorem to a neighborhood of any point
where the Riemann curvature tensor vanishes but its covariant derivative does not.
Meanwhile, for $n=4$, the results of \cite{BGY83}, \cite{GY88}, and \cite{NM89} imply that
there exists a finite set of algebraic relations among the Riemann curvature tensor and its covariant derivatives,
with the property that a local isometric embedding exists in a neighborhood of any point where these relations do not all hold.

Our main result is a new, simpler proof of the following theorem of Nakamura-Maeda \cite{NM89} when $n=3$ and $N=6$ (also see Goodman-Yang \cite{GY88}):

\begin{theoremvoid}[cf. Theorem \ref{main-theorem-3D}]
Let $(M,g)$ be a $C^\infty$ Riemannian manifold of dimension $3$; let $\bfx_0 \in M$ so that
the Riemann curvature tensor $R(\bfx_0)$ is nonzero.
Then there exists a neighborhood $\Omega \subset M$ of $\bfx_0$ for which there is a $C^\infty$ isometric embedding $\bfy:\Omega \to \R^6$.
\end{theoremvoid}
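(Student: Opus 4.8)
The plan is to derive the theorem from the local existence theorem for strongly symmetric positive systems proved in Part~1, after recasting the isometric embedding problem in that form. Fix local coordinates $\bfx = (x^1,x^2,x^3)$ centered at $\bfx_0$. By the fundamental theorem of submanifold theory (Gauss--Codazzi--Ricci), to produce a local isometric embedding $\bfy\colon\Omega\to\R^6$ it is enough to produce, on some neighborhood $\Omega$ of $\bfx_0$, a rank-$3$ Riemannian vector bundle $E$ with a compatible connection $\nabla^\perp$ and an $E$-valued symmetric $2$-tensor $\II$ (the prospective normal bundle and second fundamental form) satisfying the Gauss, Codazzi, and Ricci equations: given such data, $\bfy$ is recovered by integrating an auxiliary first-order (total differential) system whose integrability conditions are exactly those equations, and the resulting map is an embedding after shrinking $\Omega$ since $d\bfy$ has full rank at $\bfx_0$. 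So it suffices to solve for the pair $(\II,\nabla^\perp)$.

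Because $\dim M = 3$, the Riemann curvature tensor is equivalent, by a pointwise linear isomorphism, to a single symmetric $2$-tensor $G$ of Einstein type, and in an orthonormal frame of $E$ the tensor $\II$ is recorded by three symmetric $3\times3$ matrices $h^4, h^5, h^6$; the Gauss equation then becomes the purely algebraic relation expressing $G$ (up to sign) as the sum of the cofactor matrices $\operatorname{cof}(h^4) + \operatorname{cof}(h^5) + \operatorname{cof}(h^6)$. This system has more unknowns than equations, and the hypothesis $R(\bfx_0)\neq 0$ --- equivalently $G(\bfx_0)\neq 0$ --- lets one choose a solution $\II_0$ whose value at $\bfx_0$ is suitably nondegenerate; this nondegeneracy is what will force strong symmetric positivity below, and it fails when $R(\bfx_0)=0$, which is precisely why that borderline case requires $\nabla R\neq 0$. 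Next, parametrize the solution variety of the Gauss equation near $\II_0$ and substitute the parametrization into the Codazzi and Ricci equations. The outcome is a first-order quasilinear system $\mathcal{L}[\bfu] = \bff$ for the remaining unknowns $\bfu$ (the components of $\II$ transverse to the Gauss constraint, together with the coefficients of $\nabla^\perp$), normalized so that $\bfu\equiv 0$ is an approximate solution at $\bfx_0$.

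The crux is to show that, after multiplication by a suitable symmetrizer and possibly a change of frame and of coordinates, $\mathcal{L}[\bfu]=\bff$ is strongly symmetric positive on an honest neighborhood of $\bfx_0$. Its principal part is assembled from the coefficients of $\II_0$, so the system is genuinely of mixed type --- it changes type with the signature of $G$, which is why the classical elliptic/hyperbolic/parabolic theory is unavailable --- while the zeroth-order term is made positive definite by first shrinking the neighborhood and rescaling so that it dominates the principal part, using the nondegeneracy of $\II_0(\bfx_0)$. Granting this, the Part~1 theorem supplies a $C^\infty$ solution $\bfu$ on some $\Omega\ni\bfx_0$; unwinding the parametrization gives $(\II,\nabla^\perp)$ satisfying Gauss--Codazzi--Ricci, and the integration step of the first paragraph then yields the desired $C^\infty$ isometric embedding $\bfy\colon\Omega\to\R^6$.

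The main obstacle, I expect, is this verification of strong symmetric positivity: coordinates, normal-frame gauge, and symmetrizer must be chosen together so that the symbol is symmetric and the lower-order term is definite on a genuine neighborhood, all while controlling the quasilinear corrections introduced by solving the Gauss equation. A secondary point is fitting the quasilinear system $\mathcal{L}[\bfu]=\bff$ to the exact hypotheses of the Part~1 theorem --- either because that theorem is already formulated for quasilinear systems, or through a contraction/iteration argument built on its linear case.
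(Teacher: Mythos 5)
Your overall architecture differs from the paper's: you propose to solve the Gauss--Codazzi--Ricci system directly for $(\II,\nabla^\perp)$ and then integrate, whereas the paper never leaves the embedding map itself --- it builds a real analytic approximate embedding $\bfy_0$ via Cartan--Janet for a nearby analytic metric, linearizes the isometric embedding system \eqref{equation:isometric} at $\bfy_0$, splits the linearization into a tangential $n\times n$ first-order subsystem plus algebraic equations for the normal components, and runs Nash--Moser on that. Your route is not absurd in principle, but it carries extra burdens you do not address: the Ricci equations are genuine PDEs here (the normal bundle has rank $3$, so $\nabla^\perp$ has nontrivial curvature), the resulting system in $(\II,\nabla^\perp)$ is overdetermined rather than an $s\times s$ system of the form \eqref{nonlinear-sys}, and the recovery of $\bfy$ by integration requires the compatibility equations to hold exactly, which a Nash--Moser iteration does not automatically preserve. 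The paper's decomposition avoids all of this because the normal components are determined \emph{algebraically} from the tangential ones.

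The decisive gap, however, is in your treatment of strong symmetric positivity, which you dispose of by ``shrinking the neighborhood and rescaling so that [the zeroth-order term] dominates the principal part.'' This cannot work. Positivity of $Q_0$ in \eqref{define-Q0} is a pointwise algebraic condition on the coefficients and is not improved by shrinking the domain; and under a dilation $\bfx\mapsto\epsilon\bfx$ the forms $Q_0$ and $Q_1$ of \eqref{define-Q0}--\eqref{define-Q1} scale by the \emph{same} factor, so no rescaling makes one dominate the other. Worse, in normal coordinates the zeroth-order coefficient vanishes at $\bfx_0$, so $Q_0(\mathbf{0})=-\sum_i\partial_iA^i(\mathbf{0})$ while the diagonal blocks of $Q_1(\mathbf{0})$ are $+2\,\partial_iA^i(\mathbf{0})$: the two positivity conditions are, on their face, mutually exclusive. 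This is exactly the obstruction the paper flags as making the situation ``appear hopeless,'' and overcoming it is the entire content of Theorem \ref{main-ssp-theorem}: one must exploit (i) the change of variables \eqref{change-of-variables-form}, whose effect on $Q_0$ and $Q_1$ (Lemma \ref{Q0-Q1-transformation-lemma}) is \emph{not} the same, and (ii) the residual freedom in the $1$-jet of the second fundamental form at $\bfx_0$ subject to the Gauss, Codazzi, and differentiated Gauss equations --- a surjectivity computation that is where the hypothesis $R(\bfx_0)\neq 0$ (through the normal form with $\sigma\neq 0$) actually enters. Your proposal names the right difficulty in its last paragraph but offers no mechanism for resolving it, so as written the proof does not go through.
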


Our proof, like the previous ones, uses the Nash-Moser implicit function
theorem (cf. Theorem \ref{Nash-Moser-theorem}) to obtain a solution.  This requires showing that the
linearized system has a solution that satisfies certain estimates known as ``smooth tame estimates" (this terminology 
is due to Hamilton; see \cite{HamiltonIFT}).
The advantage of our approach is that it completely eliminates the need for the microlocal analysis and Fourier integral operators used
in the proofs of Nakamura-Maeda and Goodman-Yang; instead, it is based on Friedrichs's theory of symmetric positive systems.

Friedrichs \cite{Friedrichs58}
 introduced the notion of a symmetric positive partial
differential operator $P$ to study
a class of first order linear systems of PDEs
\begin{equation}\label{system}
P\bfv = A^i\partial_i\bfv + B\bfv = \mathbf{h}
\end{equation}
that do not necessarily fall into one of
the standard types (elliptic, hyperbolic, parabolic). He proved,
under suitable boundary conditions on the domain $\Omega$, the
existence and uniqueness of an $L^2(\Omega)$ solution to the system \eqref{system}.
No higher order regularity of solutions is guaranteed, even if the functions $A^i$, $B$, and $\mathbf{h}$ are $C^\infty$.

We call a domain that satisfies Friedrichs's boundary condition {\em {P}-convex} (cf. Definition \ref{P-convex-def}).
Such a domain $\Omega$ has the remarkable property that any solution $\bfv$ to a symmetric positive system \eqref{system}
on $\Omega$ is {\em unique} in $L^2(\Omega)$, {\em without assuming any boundary
conditions on $\bfv$.}  This surprising rigidity occurs because a symmetric positive operator $P$ always
has a subtle type of singularity in the interior of a $P$-convex domain.  In \S \ref{state-theorem-sec},
we give a $1$-dimensional example, where the system reduces to a scalar ODE, that illustrates how this occurs.

We introduce in this paper a new positivity condition that we call {\em strong symmetric positivity} (cf. Definition \ref{ssp-def})
and prove a local existence and regularity theorem for first order linear and nonlinear systems satisfying it (cf. Theorem \ref{nonlinear-theorem}).
As the name indicates, this condition is a strengthening of Friedrichs's notion of symmetric positivity.
Moser \cite{Moser1} introduced a similar but weaker assumption, closely related to the Legendre-Hadamard condition,
and proved that any real analytic system of the form \eqref{system} satisfying this condition on a $P$-convex domain has a unique real analytic solution $\bfv$.
Tso \cite{Tso92} proved a similar $C^\infty$ existence theorem on a $P$-convex domain under Moser's condition,
but we believe that his proof actually requires the stronger assumption of strong symmetric positivity.
Both Moser and Tso
used their results for linear systems to prove analogous
perturbation theorems for nonlinear strongly symmetric positive
systems
\begin{equation}\label{nonlinear}
\Phi(\bfu) = \bff
\end{equation}
on a domain $\Omega \subset \R^n$,
provided that $\bff$ is sufficiently close to $\Phi(\bfu_0)$ for a given function $\bfu_0$,
and $\Omega$ is $P$-convex, where $P$ is the
linearization of $\Phi$ at $\bfu_0$.

Our proof of Theorem \ref{main-theorem-3D} proceeds in two major steps. In Part 1
(\S \ref{ssp-sec}--\S \ref{proof-sec}), we establish the local solvability of a nonlinear strongly symmetric
positive system using the Nash-Moser implicit function
theorem. In Part 2 (\S \ref{state-main-theorems-sec}--\S \ref{ssp-sec-2}), we show that, if the Riemann
curvature tensor is nonzero at $\bfx_0 \in M$, then there exists an approximate
isometric embedding on a neighborhood of $\bfx_0$ where the linearized
operator can be made strongly symmetric positive by applying a carefully chosen change of
variables.  This argument consists primarily of linear algebra and requires essentially no analysis 
beyond that required for Part 1.  Theorem \ref{main-theorem-3D} then follows by the smooth tame estimates
established in Part 1 and the Nash-Moser implicit function theorem.

The first step requires solving linear strongly symmetric
positive systems on a sufficiently small, but fixed, neighborhood of a
point $\bfx_0$
in the domain and showing that solutions satisfy smooth
tame estimates.
Surprisingly, Tso's global existence theorem for strongly symmetric
positive systems on a $P$-convex domain does not directly imply a local
solvability theorem. This is because there does not necessarily exist
a $P$-convex domain in a neighborhood of a given point $\bfx_0$.
This subtle fact is
best illustrated by the $1$-dimensional example given in \S\ref{state-theorem-sec}.
In \S\ref{proof-sec}, we show how this difficulty may be overcome by first restricting the linearized
system to a sufficiently small neighborhood of $\bfx_0$ and then extending
the restricted system to a large ball in $\R^n$ that is $P$-convex for the extended system.

Before proceeding, we recall the following standard notations and facts regarding Sobolev spaces on a domain $\Omega \subset \R^n$:
\begin{itemize}
\item The Euclidean norm on vectors or matrices is denoted by $| \cdot |$,  and the $\ell_\infty$-norm on 
vectors or matrices is denoted by $| \cdot |_\infty$.
\item  The Sobolev spaces are denoted by
$$
W^{k,p}(\Omega)=\{u\in L^p(\Omega) \, :\,  \|u\|_{k, p}<\infty\},
$$
where  $\|u\|_{k, p}=\sum_{|\alpha|\le k} \|D^\alpha u\|_{L^p}$ is the Sobolev norm for the 
multi-index $\alpha=(\alpha_1$, $\ldots$, $\alpha_n)$, and $D^\alpha u=\frac{\partial^\alpha u}{(\partial x^1)^{\alpha_1}\cdots (\partial x^n)^{\alpha_n}}$.

\item For $p=2$, $W^{k,2}(\Omega)$ is denoted by $H^k(\Omega)$,
 with the norm $\|\cdot\|_{k,2}$ denoted by
$\|\cdot\|_k$.

\item The $C^k(\Omega)$-norm is denoted by
\[ 
\| u \|_{k,\infty} = \sum_{j=0}^k \sum_{|\alpha|\le j} \sup_{\bfx \in \Omega} |D^\alpha u(\bfx)|.
\]

\item The Sobolev embedding theorem \cite{Sobolev38} implies that $H^{k+m}(\Omega)$ can be continuously embedded 
into $C^{k}(\Omega)$ whenever $m\ge 1+\left[\frac{n}{2}\right]$; in particular, there exist constants $M_k$, depending only on $\Omega$, such that
\begin{equation}\label{Sobolev-embedding-estimate}
\|u\|_{k,\infty} \leq M_k \|u\|_{k + 1 + \left[\frac{n}{2}\right]}.
\end{equation}
\end{itemize}

\part{A Local Existence Theorem for Strongly Symmetric Positive Systems}

\section{Strong symmetric positivity}\label{ssp-sec}

Let $\Omega \subset \R^n$ be a bounded, open domain with piecewise smooth boundary $\partial \Omega$ and 
coordinates $\bfx = (x^1, \ldots, x^n)$. Let $\Phi:C^\infty(\bar{\Omega}, \R^s) \to C^\infty(\bar{\Omega}, \R^s)$ 
be a $C^\infty$, nonlinear first-order partial differential operator.  Explicitly, for $\bfu  \in C^\infty(\bar{\Omega}, \R^s)$, 
write
\[ 
\Phi(\bfu) = \mathbf{F}\left(\bfx, \bfu, \nabla\bfu \right), 
\]
where $\mathbf{F}(\bfx, \bfz, \mathbf{p} ) = (F^1(x^i, z^a, p^a_i), \ldots, F^s(x^i, z^a, p^a_i)) $ is a $C^\infty$, 
$\R^s$-valued function on $\bar{\Omega} \times \R^s \times \R^{ns}$.  
Given a function $\mathbf{f} \in C^\infty(\bar{\Omega}, \R^s)$, consider the PDE system:
\begin{equation}
\Phi(\bfu) = %\mathbf{F}\left(\bfx, \bfu, \nabla\bfu \right) =
\mathbf{f} . \label{nonlinear-sys}
\end{equation}

The {\em linearization} of $\Phi$ at the function $\bfu_0 \in C^\infty(\bar{\Omega}, \R^s)$ is the linear first-order 
partial differential operator $\Phi'(\bfu_0): C^\infty(\bar{\Omega}, \R^s) \to C^\infty(\bar{\Omega}, \R^s)$ defined by
\begin{equation}
\Phi'(\bfu_0)\bfv = \frac{d}{dt}\bigg{\vert}_{t=0} \Phi(\bfu_0 + t\bfv) =
\sum_{i=1}^n A^i \partial_i \bfv + B \bfv , \label{linear-sys}
\end{equation}
where
$A^i, B \in C^\infty(\bar{\Omega}, \R^{s \times s})$ are given by
\begin{gather*}
 A^i(\bfx) =
\begin{bmatrix} \left(A^i(\bfx)\right)^a_b \end{bmatrix} =
 \begin{bmatrix} \displaystyle{\frac{\partial F^a}{\partial p^b_i}} (\bfx, \bfu_0(\bfx), \nabla \bfu_0(\bfx)) \end{bmatrix} , \\
 B(\bfx) =
 \begin{bmatrix} \left(B(\bfx)\right)^a_b \end{bmatrix} =
 \begin{bmatrix} \displaystyle{\frac{\partial F^a}{\partial z^b}}(\bfx, \bfu_0(\bfx), \nabla \bfu_0(\bfx)) \end{bmatrix}.
\end{gather*}
We will also consider the linear PDE system:
\begin{equation}\label{linear-sys-example}
\sum_{i=1}^n A^i \partial_i \bfv + B \bfv = \mathbf{h},
\end{equation}
where $\mathbf{h} \in C^\infty(\bar{\Omega}, \R^s)$.

\begin{definition}\label{ssp-def}
The linear partial differential operator \eqref{linear-sys} is called:
\begin{itemize}
\item {\em symmetric} if the matrices $A^1(\bfx), \ldots, A^n(\bfx)$ are symmetric for all $\bfx \in \bar{\Omega}$;
\item {\em symmetric positive} if it is symmetric and the quadratic form $Q_0(\bfx):\R^s \to \R$ defined by
\begin{equation}\label{define-Q0}
 Q_0(\bfx)(\xi) = \xi^{\sf T} \Big{(}B(\bfx) + B^{\sf T} (\bfx) - \sum_{i=1}^n \partial_i A^i(\bfx)\Big{)} \xi
\end{equation}
is positive definite for all $\bfx \in \bar{\Omega}$;
\item {\em strongly symmetric positive} if it is symmetric positive
and
the quadratic form $Q_1(\bfx):\R^{ns} \to \R$ defined by
\begin{equation}\label{define-Q1}
Q_1(\bfx)(\xi_1, \ldots, \xi_n) = \sum_{i,j=1}^n \xi_j^{\sf T}\left(\partial_j A^i(\bfx) + \partial_i A^j(\bfx)\right)\xi_i
\end{equation}
is positive definite for all $\bfx \in \bar{\Omega}$.
\end{itemize}
The nonlinear system \eqref{nonlinear-sys} is called {\em symmetric} (resp., {\em symmetric positive},
{\em strongly symmetric positive}) at $\bfu_0$ if the linearization \eqref{linear-sys} of $\Phi$ at $\bfu_0$ 
is symmetric (resp., symmetric positive, strongly symmetric positive).
\end{definition}

\begin{remark}
A few remarks are in order regarding Definition \ref{ssp-def}:

\begin{itemize}
\item The quadratic form $Q_1(\bfx)$ can be represented by the symmetric $ns \times ns$ matrix
\begin{equation}\label{define-Q1-matrix}
Q_1(\bfx) = \left[
\begin{MAT}[5pt]{c|c|c|c}
 2\, \partial_1 A^1(\bfx) & \partial_1 A^2(\bfx) + \partial_2 A^1(\bfx) & \cdots & \partial_1 A^n(\bfx) + \partial_n A^1(\bfx) \\-
 \partial_1 A^2(\bfx) + \partial_2 A^1(\bfx)  & 2\, \partial_2 A^2(\bfx) & \cdots & \partial_2 A^n(\bfx) + \partial_n A^2(\bfx) \\-
\vdots & \vdots & \vdots & \vdots   \\-
  \partial_1 A^n(\bfx) + \partial_n A^1(\bfx) & \partial_2 A^n(\bfx) + \partial_n A^2(\bfx) & \cdots & 2\, \partial_n A^n(\bfx) \\
\end{MAT}
   \right] .
\end{equation}
We will use the notation $(Q_1)_{ij}(\bfx)$ to denote the $(i,j)$th block of $Q_1(\bfx)$:
\[ (Q_1)_{ij}(\bfx) = \partial_i A^j(\bfx) + \partial_j A^i(\bfx). \]
\item The positivity of $Q_1(\bfx)$ is called the {\em Legendre condition} (\cite{Morrey08}, p.10).
Moser \cite{Moser1} established an existence theorem in the real analytic category
under the slightly weaker {\em Legendre-Hadamard condition} (\cite{Morrey08}, p.11), which requires only that
\begin{equation} \left((Q_1)_{ij}\right)_{ab}(\bfx) \xi^a \xi^b \eta^i \eta^j \geq \lambda |\xi|^2|\eta|^2 \label{Legendre-Hadamard}
\end{equation}
for all $\xi \in \R^s, \eta \in \R^n$, and some $\lambda > 0$.  However, in the $C^\infty$ category,
the stronger Legendre condition is necessary (cf. \cite{Serre06,YangMO}).
\end{itemize}
\end{remark}

\begin{definition}\label{P-convex-def}
Given a linear strongly symmetric positive first order partial differential operator $P = A^i \partial_i + B$ on a domain
$\Omega \subset \mathbb{R}^n$, the domain $\Omega$ is called {\em $P$-convex} if the {\em characteristic matrix}
$$
\beta(\bfx) = \sum_{i=1}^n \nu_i(\bfx) A^i(\bfx),
$$
is positive definite at each point $\bfx \in \partial\Omega$,
where $\nu(\bfx) = (\nu_1(\bfx), \ldots, \nu_n(\bfx))$ denotes the outer unit normal vector to $\partial \Omega$ at $\bfx \in \partial\Omega$,
\end{definition}

Tso \cite{Tso92} proved the following:

\begin{theoremvoid}[Theorem 5.1, \cite{Tso92}]
Suppose that $\Phi(\mathbf{0}) = \mathbf{0}$ and that the system \eqref{nonlinear-sys} is strongly symmetric 
positive at every $C^\infty$ function $\bfu$ in some $C^1$-neighborhood of the function $\bfu_0 = \mathbf{0}$ 
on a domain $\Omega \subset \R^n$ that is $P$-convex for the linearization $P$ of $\Phi$ at $\bfu_0 = \mathbf{0}$.
Then there exist an integer $\beta$ and a small constant $\epsilon>0$ such that, 
for any $\bff \in C^\infty(\bar{\Omega}, \R^s)$ with $\|\bff\|_\beta < \epsilon$, 
there exists a solution $\bfu \in C^\infty(\bar{\Omega}, \R^s)$ to the nonlinear system \eqref{nonlinear-sys} on $\bar{\Omega}$.
\end{theoremvoid}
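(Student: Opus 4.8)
The plan is to deduce the theorem from the Nash--Moser implicit function theorem (Theorem \ref{Nash-Moser-theorem}). Since strong symmetric positivity at $\bfu_0 = \mathbf{0}$ is an open condition on the $C^1$-jet of the background function, it persists at every $\bfu$ in a (possibly smaller) $C^1$-neighborhood of $\mathbf{0}$, with uniform positivity constants. So the whole problem reduces to the \emph{linear} system \eqref{linear-sys-example}: for every such $\bfu$, with $A^i,B$ the coefficients of the linearization $\Phi'(\bfu)$, one must (i) solve $A^i\partial_i\bfv + B\bfv = \bfh$, and (ii) produce smooth tame estimates of the shape $\|\bfv\|_k \le C_k\big(\|\bfh\|_{k+r} + \|\bfh\|_r\,\|\bfu\|_{k+r}\big)$ with $r$ fixed and $C_k$ independent of $\bfh$ and $\bfu$. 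Nash--Moser then furnishes a $\bfu$ solving \eqref{nonlinear-sys} once $\|\bff\|_\beta$ is small.

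For the base $L^2$ theory I would follow Friedrichs. Pairing \eqref{linear-sys-example} with $\bfv$ and integrating by parts to move $\partial_i$ off $A^i$ yields the identity $\langle P\bfv,\bfv\rangle = \tfrac12\int_\Omega \bfv^{\sf T}(B + B^{\sf T} - \partial_i A^i)\bfv + \tfrac12\int_{\partial\Omega}\bfv^{\sf T}\beta\,\bfv$; the interior term is coercive by symmetric positivity ($Q_0 > 0$) and the boundary term is $\ge 0$ by $P$-convexity, giving both uniqueness of $\bfv$ with \emph{no} boundary condition imposed and the a priori bound $\|\bfv\|_0 \le C\|\bfh\|_0$. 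For existence, the formal adjoint $P^* = -A^i\partial_i + (B^{\sf T} - \partial_i A^i)$ is again symmetric positive but has characteristic matrix $-\beta < 0$ on $\partial\Omega$, so it carries the complementary boundary condition $\bfw|_{\partial\Omega} = 0$, for which the boundary term vanishes and one gets $\|\bfw\|_0 \le C\|P^*\bfw\|_0$. A Hahn--Banach/Riesz duality argument then produces a weak $L^2$ solution $\bfv$ of $P\bfv = \bfh$.

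The substantive point is the higher-order regularity together with the tame bookkeeping, and this is exactly where \emph{strong} symmetric positivity is used. For a multi-index $\beta$ with $|\beta| = k-1$, differentiating \eqref{linear-sys-example} by $\partial_\ell\partial^\beta$ gives $A^i\partial_i(\partial_\ell\partial^\beta\bfv) + B\,\partial_\ell\partial^\beta\bfv = \partial_\ell\partial^\beta\bfh - [\partial_\ell\partial^\beta, A^i\partial_i + B]\bfv$, whose commutator contains the top-order pieces $(\partial_\ell A^i)\,\partial_i\partial^\beta\bfv$, of the same order as $\partial_\ell\partial^\beta\bfv$, which cannot be treated perturbatively by symmetric positivity alone. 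Running the $\langle\,\cdot\,,\partial_\ell\partial^\beta\bfv\rangle$ energy identity and summing over $\ell$ (and over all $\beta$ with $|\beta|=k-1$), these terms assemble, after symmetrization in the differentiation indices, into the Legendre form $\int_\Omega Q_1(\bfx)$ evaluated on $(\partial_1\partial^\beta\bfv,\dots,\partial_n\partial^\beta\bfv)$ from \eqref{define-Q1}, which is positive definite and hence may be moved to the coercive side rather than estimated. The remaining commutator and boundary contributions are genuinely of lower order in $\bfv$ (the boundary integrals again inheriting the favorable sign of $\beta$), so an induction on $k$ closes after the earlier steps control $\|\bfv\|_{k-1}$. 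Carrying this out with Gagliardo--Nirenberg and Moser-type product estimates — so that the dependence of $\|\partial^\gamma A^i\|,\|\partial^\gamma B\|$ on $\bfu$ enters linearly at top order — yields precisely the tame estimates. Upgrading the weak $L^2$ solution to a $C^\infty$ one is then done by a regularization argument (difference quotients or a Galerkin scheme), again leaning on the Legendre condition to make the approximate energy estimates uniform.

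I expect the main obstacle to be this last cluster of issues rather than the clean first-order identity: the operator is neither elliptic nor hyperbolic, so the step upgrading the weak solution to a genuinely smooth one cannot invoke a standard regularity theorem and must be built by hand around the strong-positivity energy estimate; and one must check carefully that, throughout the induction, every boundary integral generated by integration by parts either carries the good sign supplied by $P$-convexity or is strictly lower order, while simultaneously tracking constants so that the final estimates are tame in Hamilton's precise sense — which is what licenses the application of Nash--Moser.
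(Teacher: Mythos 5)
Your proposal is correct and follows essentially the same route as the paper: the paper only cites Tso for this statement, but the identical argument appears in its proof of Theorem \ref{nonlinear-theorem} on the $P$-convex ball $B_R$ --- Friedrichs's $L^2$ theory plus $P$-convexity for the zeroth-order bound, repeated differentiation with the commutator terms $(\partial_\ell A^i)\partial_i\partial^\beta\bfv$ symmetrized into the positive Legendre form $Q_1$, Gagliardo--Nirenberg interpolation to make the estimates tame, and the Nash--Moser theorem to close. The only caveats are minor: openness of the positivity conditions is with respect to the $C^2$- rather than $C^1$-topology (since $Q_0,Q_1$ involve $\partial_j A^i$ and hence $\nabla^2\bfu$), which is immaterial here because the hypothesis already grants strong symmetric positivity throughout the neighborhood, and the weak-to-smooth regularity upgrade that you flag as the delicate step is likewise deferred to Friedrichs/Tso in the paper.
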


\begin{remark}
Note that the condition that a PDE system is symmetric is not an open condition with respect to the coefficients. 
Since the Nash-Moser implicit function theorem requires solving the linearized equation not just at $\bfu_0$, 
but at all $\bfu$ near $\bfu_0$, it is necessary to assume that $\Phi'(\bfu)$ is symmetric for all $\bfu$ in 
some neighborhood of $\bfu_0$.  
The positivity conditions, however, are open conditions; hence it suffices to assume that they hold at $\bfu_0$.
\end{remark}

Moser \cite{Moser1} proved this theorem in the case where $\Phi$ and the function $\bff$ in equation \eqref{nonlinear-sys} 
are real analytic, under the weaker assumption of symmetric positivity together with 
the Legendre-Hadamard condition \eqref{Legendre-Hadamard}. 
Tso \cite{Tso92} stated this theorem assuming these same conditions; however, we believe that Tso's proof, 
which uses the G\"arding inequality for non-compactly-supported vector-valued functions on the domain $\Omega$, 
is correct only if the stronger Legendre condition holds. See \cite{Serre06} and the discussion at \cite{YangMO}.

\section{A local existence theorem for strongly symmetric positive systems}\label{state-theorem-sec}

The goal of Part 1 of this paper is to prove the following local version of Tso's theorem:

\begin{ourtheorem}\label{nonlinear-theorem}
Suppose that the linearization $\Phi'(\bfu)$ of $\Phi$ is symmetric for all $\bfu$ in some $C^1$-neighborhood
of $\bfu_0 \in C^\infty(\bar{\Omega}, \R^s)$, and that $\Phi'(\bfu_0)$ is strongly symmetric positive
at some point $\bfx_0 \in \Omega$.  Then there exist a neighborhood $\Omega_0 \subset \Omega$ of $\bfx_0$,
an integer $\beta$, and $\epsilon > 0$ such that, for any $\bff \in C^\infty(\Omega_0, \R^s)$
with $\|\Phi(\bfu_0) - \bff\|_\beta < \epsilon$,
there exists a solution $\bfu \in C^\infty(\Omega_0, \R^s)$ to the nonlinear system \eqref{nonlinear-sys} on $\Omega_0$.
\end{ourtheorem}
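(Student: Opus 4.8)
The plan is to deduce Theorem~\ref{nonlinear-theorem} from Tso's theorem by \emph{extending} the localized system to a large ball that is $P$-convex for the extended operator, while leaving the system unchanged near $\bfx_0$. First I would reduce to the case $\bfx_0 = 0$, $\bfu_0 = \mathbf{0}$, $\Phi(\mathbf{0}) = \mathbf{0}$, by replacing $\Phi$ with $\bfv \mapsto \Phi(\bfu_0 + \bfv) - \Phi(\bfu_0)$ and $\bff$ with $\bff - \Phi(\bfu_0)$; then $P = \Phi'(\mathbf{0}) = \sum_i A^i\partial_i + B$ is strongly symmetric positive at $0$ and $\Phi'(\bfv)$ is symmetric for $\bfv$ near $\mathbf{0}$. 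By the openness of the positivity conditions (cf.\ the Remark following Tso's theorem) and continuity, I can fix $\rho > 0$ with $\overline{B_{2\rho}(0)} \subset \Omega$ and $\lambda_0 > 0$ so that the quadratic forms $Q_0, Q_1$ associated to $\Phi'(\bfv)$ satisfy $Q_0 \geq \lambda_0 I$ and $Q_1 \geq \lambda_0 I$ on $\overline{B_{2\rho}}$ for all $\bfv$ in a neighborhood of $\mathbf{0}$.

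The core of the argument is the construction of the extension. Fix a smooth $g\colon [0,2] \to [0,\infty)$ with $g \equiv 0$ on $[0,1]$, $g' \geq 0$, $g(2) = 1$, and a constant $\mu > 0$ to be chosen; set $c(t) = \tfrac{\mu}{2\rho}\bigl(t g'(t) + n g(t)\bigr)$ (smooth, nonnegative, vanishing on $[0,1]$). Define $\hat\Phi$ on $\overline{B_{2\rho}}$ by $\hat\Phi(\bfw)(\bfx) = \hat{\mathbf{F}}(\bfx, \bfw(\bfx), \nabla\bfw(\bfx))$, where
\[
\hat{\mathbf{F}}(\bfx, \bfz, \mathbf{p}) = \mathbf{F}(\bfx, \bfz, \mathbf{p}) + \frac{\mu}{\rho}\,g\!\left(\frac{|\bfx|}{\rho}\right)\sum_{i=1}^n x^i\mathbf{p}_i + c\!\left(\frac{|\bfx|}{\rho}\right)\bfz .
\]
Since $g$ and $c$ vanish on $[0,1]$, one has $\hat\Phi = \Phi$ on $B_\rho$; since the added terms are linear in $(\bfz, \mathbf{p})$, the linearization $\hat\Phi'(\bfw)$ has coefficient matrices $A^i_\bfw + \tfrac{\mu}{\rho}g(|\bfx|/\rho)x^i I$ and $B_\bfw + c(|\bfx|/\rho)I$, which are again symmetric for $\bfw$ near $\mathbf{0}$; a direct computation shows that its $Q_1$-form equals that of $\Phi'(\bfw)$ plus the manifestly nonnegative quantity $2\mu\bigl[\tfrac{g'}{\rho^2|\bfx|}|\sum_i x^i\xi_i|^2 + \tfrac{g}{\rho}\sum_i|\xi_i|^2\bigr]$, while the choice of $c$ makes its $Q_0$-form coincide with that of $\Phi'(\bfw)$. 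Hence $\hat\Phi$ is strongly symmetric positive at every $\bfw$ near $\mathbf{0}$. Finally, on $\partial B_{2\rho}$ the outward normal is $\nu = \bfx/(2\rho)$ and $g(|\bfx|/\rho) = 1$, so the characteristic matrix of $\hat P = \hat\Phi'(\mathbf{0})$ is $\tfrac{1}{2\rho}\sum_i x^i A^i(\bfx) + 2\mu I$, which is positive definite once $\mu > \tfrac{\sqrt{n}}{2}\sup_{\overline{B_{2\rho}}}\max_i \|A^i\|$; thus $B_{2\rho}$ is $\hat P$-convex.

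Now Tso's theorem, applied to $\hat\Phi$ on $B_{2\rho}$, yields an integer $\beta$ and $\epsilon_0 > 0$ such that every $\hat\bff \in C^\infty(\overline{B_{2\rho}}, \R^s)$ with $\|\hat\bff\|_\beta < \epsilon_0$ admits a solution $\hat\bfu \in C^\infty(\overline{B_{2\rho}}, \R^s)$ of $\hat\Phi(\hat\bfu) = \hat\bff$. I would take $\Omega_0 = B_\rho(\bfx_0)$ and $\epsilon = \epsilon_0/C$, where $C$ is the norm of a fixed extension procedure (a smooth cutoff composed with a bounded Sobolev extension operator) carrying $C^\infty(\overline{B_\rho}, \R^s)$ to $C^\infty(\overline{B_{2\rho}}, \R^s)$ and restricting to the identity on $B_\rho$; then, given $\bff$ with $\|\Phi(\bfu_0) - \bff\|_\beta < \epsilon$, apply this procedure to $\bff - \Phi(\bfu_0)$ to get $\hat\bff$ with $\|\hat\bff\|_\beta < \epsilon_0$, solve $\hat\Phi(\hat\bfu) = \hat\bff$, and note that, since $\hat\Phi = \Phi$ and $\hat\bff = \bff - \Phi(\bfu_0)$ on $B_\rho$, the function $\bfu_0 + \hat\bfu|_{\overline{B_\rho}} \in C^\infty(\overline{B_\rho}, \R^s)$ solves the original system \eqref{nonlinear-sys} on $\Omega_0$.

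I expect the main obstacle to be precisely the extension construction and the verification that strong symmetric positivity survives it---this is the ``there need not be a $P$-convex domain near $\bfx_0$'' phenomenon highlighted in the introduction. The construction works because the model perturbation $\tfrac{\mu}{\rho}g(|\bfx|/\rho)\sum_i x^i\partial_i$ is \emph{added} to (rather than interpolated with) the given operator: this keeps $Q_1 > 0$ automatically, perturbs $Q_0$ only by a scalar that can be absorbed into $B$, and---because $\sum_i x^i\cdot x^i = |\bfx|^2 > 0$ on the sphere---forces the characteristic matrix to be positive definite on $\partial B_{2\rho}$. The remaining care is bookkeeping: the constants must be chosen in the order $\rho$ (small), then $\mu$ (large), then $g$ and $c$, with no circular dependence, and one must confirm that the openness of the positivity conditions holds in the function-space topology relevant to Tso's theorem.
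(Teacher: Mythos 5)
Your proposal is correct, and its key computations check out: the added first-order term contributes the manifestly nonnegative quantity $2\mu\bigl(\tfrac{g'}{\rho^2|\bfx|}\bigl|\sum_i x^i\xi_i\bigr|^2+\tfrac{g}{\rho}\sum_i|\xi_i|^2\bigr)$ to $Q_1$; the zeroth-order coefficient $c$ is chosen exactly to cancel the divergence $\tfrac{\mu}{\rho}\sum_i\partial_i\bigl(g(|\bfx|/\rho)\,x^i\bigr)=\tfrac{\mu}{\rho}\bigl(tg'(t)+ng(t)\bigr)$ in $Q_0$; and on $\partial B_{2\rho}$ the characteristic matrix acquires the term $2\mu I$, which dominates for $\mu$ large. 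But your route is genuinely different from the paper's. You modify the \emph{nonlinear} operator on the annulus $B_{2\rho}\setminus B_\rho$ so as to manufacture a $P$-convex domain containing $\bfx_0$, and then invoke Tso's global theorem as a black box. The paper instead keeps the nonlinear problem on the small ball, extends only the \emph{linearized} systems to all of $\R^n$ via the Stein operator applied to the Taylor remainders (retaining the affine part $\bar{A}^i+x^j\bar{A}^i_j$ exactly), obtains $P$-convexity of $B_R$ for $R$ large from the intrinsic positivity of $Q_1$ rather than from an artificial boundary term, and then re-derives the full chain of smooth tame estimates \eqref{v-order-0}--\eqref{v-order-k} to feed directly into the Nash--Moser theorem. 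Your approach buys brevity and a transparent picture of how the ``no $P$-convex neighborhood'' obstruction is circumvented; the paper's buys (i) independence from Tso's theorem, about whose published proof the authors express reservations, and (ii) the explicit tame estimates, which are what Proposition \ref{key-ssp-prop} actually reuses in Part 2, where the bare statement of Theorem \ref{nonlinear-theorem} would not suffice. Two points of bookkeeping you already flagged and should make precise: since $Q_0$ and $Q_1$ involve $\partial_jA^i_{\bfw}$ and hence second derivatives of $\bfw$, the uniform lower bounds over $\overline{B_{2\rho}}$ hold for $\bfw$ in a $C^2$- (or $H^{3+[\frac{n}{2}]}$-) neighborhood of $\mathbf{0}$, not a $C^1$-neighborhood, which is how Tso's hypothesis must be read and matches the paper's Lemma \ref{choose-nbhd-lemma}; and the extension of $\bff-\Phi(\bfu_0)$ with $\|\hat{\bff}\|_\beta\le C\|\bff-\Phi(\bfu_0)\|_\beta$ is supplied by Lemma \ref{dilation-lemma}, so no separate cutoff is needed.
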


We wish to emphasize that Tso's theorem does not immediately imply the local existence result,
because strong symmetric positivity on a domain $\Omega$ does not necessarily guarantee
the existence of a $P$-convex neighborhood of $\bfx_0$.  In fact, as we show in the example below,
in general no such neighborhood exists.

\begin{example}\label{1d-example}
Consider the following ODE:
\begin{equation}\label{ODE-example}
(x - x_0) u' + bu = h(x)
\end{equation}
with $h\in C^\infty$.
It is straightforward
to verify that:
\begin{enumerate}
\item[\rm (i)]  \eqref{ODE-example} is strongly symmetric positive if $b > \tfrac{1}{2}${\rm ;}
\item[\rm (ii)] an interval $\Omega = (x_1, x_2)$ is $P$-convex if and only if $x_0 \in
(x_1,x_2)$, i.e., if and only if the regular singular point of this ODE lies in the
domain.
\end{enumerate}

Meanwhile, the general solution of \eqref{ODE-example} is
\[ u(x) = \frac{1}{(x - x_0)^b} \int_{x_0}^x (y - x_0)^{b-1} h(y)\, dy + \frac{C}{(x-x_0)^b}, \]
which is smooth at $x=x_0$ if and only if $C=0$.  Thus we see that:

\begin{itemize}
\item The $P$-convexity condition forces the uniqueness of a $C^\infty$ solution of \eqref{ODE-example} on $\Omega$,
{\em without} specifying any initial or boundary data for $u$.
\item If $\Omega$ is {\em not} $P$-convex---i.e., if $x_0 \notin \Omega$, then the ODE \eqref{ODE-example} has infinitely many solutions on $\Omega$.
In this case, $P$-convexity---and hence uniqueness of the solution---can be achieved by extending the domain to one that contains the singular point $x_0$.
\end{itemize}

In higher dimensions, a similar phenomenon occurs: Consider the strongly symmetric positive linear PDE system
\eqref{linear-sys-example}
on a domain $\Omega \subset \R^n$, and let $\bfx_1, \bfx_2 \in \partial\Omega$ be located on opposite sides of $\partial\Omega$,
with $\nu = \nu(\bfx_1) = -\nu(\bfx_2)$.
In order to have $\beta(\bfx_1), \beta(\bfx_2) > 0$, the matrix $\nu_i A^i(\bfx)$ must be positive definite at $\bfx_1$
and negative definite at $\bfx_2$.  Therefore, $P$-convexity requires that each of its eigenvalues must change sign somewhere in the interior of $\Omega$.
For $n \geq 2$, this does not necessarily imply that the system \eqref{linear-sys-example} has any singular points in $\Omega$,
but it is still true that any $C^\infty$ solution on $\Omega$ is unique.  Moser discussed
this in \cite{Moser1}, concluding that, ``The reason for this strange phenomenon is that usually the conditions [of the theorem]
imply the presence of a singularity and a solution which remains smooth at the singularity is unique."
\end{example}

Our proof of Theorem \ref{nonlinear-theorem} will proceed as follows: Without loss of generality, assume that $\bfu_0 = \mathbf{0}$ and $\Phi(\mathbf{0}) = \mathbf{0}$.
\begin{itemize}
\item In \S \ref{proof-1-step-1-sec}, we restrict the nonlinear system \eqref{nonlinear-sys} to a neighborhood 
$\Omega_0 \subset \Omega$ of $\bfx_0$ on which the quadratic forms $(Q_\bfu)_0(\bfx)$ and $(Q_\bfu)_1(\bfx)$ associated 
to any sufficiently small function $\bfu$ on $\Omega_0$ remain sufficiently 
close to $Q_0(\mathbf{0})$ and $Q_1(\mathbf{0})$, respectively.

\item In \S \ref{proof-1-step-2-sec}, we extend the linear PDE system \eqref{linear-sys-example} from the domain $\Omega_0$ to 
a strongly symmetric positive system on all of $\mathbb{R}^n$, where the coefficients satisfy $C^1$ bounds that will be needed later.

\item In \S \ref{boundary-conditions-subsec}, we show that, for sufficiently large $R>0$, the ball $B_R$ of 
radius $R$ is $P$-convex for the extended linear system.

\item In \S \ref{proof-1-step-3-sec}, we use the extended linear system on $B_R$ to prove the smooth tame estimates 
required to implement a Nash-Moser iteration scheme to solve the nonlinear system \eqref{nonlinear-sys} on $\Omega_0$.
\end{itemize}

Appendix \ref{classic-theorems-app} contains the precise statements of the Stein extension theorem \cite{Stein70} 
and the Nash-Moser implicit function theorem \cite{Sergeraert70} that will be used in the proof of Theorem \ref{nonlinear-theorem}.

\section{Proof of Theorem \ref{nonlinear-theorem}}\label{proof-sec}

\subsection{Restriction of the nonlinear system to an appropriate neighborhood of $\bfx_0$}\label{proof-1-step-1-sec}

Without loss of generality, assume that $\bfu_0 = \mathbf{0}$, $\Phi(\mathbf{0}) = \mathbf{0}$, and $\bfx_0 = \mathbf{0}$.
First, we show how to choose an appropriate neighborhood $\Omega_0$ on which to construct a solution for the system \eqref{nonlinear-sys}.

For ease of notation, set
\[ \bar{B} = B(\mathbf{0}), \qquad \bar{A}^i = A^i(\mathbf{0}), \qquad \bar{A}^i_j = \partial_j A^i(\mathbf{0}). \]
Using Taylor's theorem with remainder, we can write
\begin{equation}\label{Taylor-expansions}
B(\bfx) = \bar{B} + \hat{B}(\bfx), \qquad A^i(\bfx) = \bar{A}^i + \sum_{j=1}^n x^j \bar{A}^i_j + \hat{A}^i(\bfx),
\end{equation}
where $\hat{B}, \hat{A}^i \in C^\infty(\bar{\Omega}, \R^{s \times s})$ are such that $\hat{B}$ vanishes to 
order $1$ and $\hat{A}^i$ vanishes to order 2 at $\bfx=\mathbf{0}$.
The strong symmetric positivity hypothesis at $\bfx = \mathbf{0}$ is equivalent to the assumption 
that the quadratic forms $\bar{Q}_0:\R^s \to \R$ and $\bar{Q}_1:\R^{ns} \to \R$ defined by
\begin{equation}
\bar{Q}_0(\xi) = \xi^{\sf T}\Big{(}\bar{B} + \bar{B}^{\sf T} 
- \sum_{i=1}^n \bar{A}^i_i\Big{)} \xi, \qquad \bar{Q}_1(\xi_1, \ldots, \xi_n) 
= \sum_{i,j=1}^n \xi_j^{\sf T}\left(\bar{A}^i_j + \bar{A}^j_i\right)\xi_i
\end{equation}
are positive definite.

\begin{lemma}\label{choose-nbhd-lemma}
Suppose that $\Phi$ satisfies the hypotheses of Theorem {\rm \ref{nonlinear-theorem}} at $\bfx = \mathbf{0}$.  
Let $\lambda_0, \lambda_1 > 0$ denote the minimum eigenvalues of $\bar{Q}_0$ and $\bar{Q}_1$, respectively, 
and let $B_r \subset \R^n$ denote the ball of radius $r$ about $\bfx = \mathbf{0}$.  
Then, given real numbers $M_0, M_1 > 1$ and $\delta > 0$, there exist real numbers $r, \rho>0$ 
and an integer $\alpha>0$ such that $B_r \subset \Omega$ and, 
for any $\bfu \in C^\infty(B_r, \R^s)$ with $\|\bfu\|_\alpha < \rho$, 
the matrix-valued functions $B_\bfu, A^i_\bfu \in C^\infty(B_r, \R^{s \times s})$ associated 
to the linearization of $\Phi$ at $\bfu$ may be written as
\[
B_\bfu(\bfx) = \bar{B}_\bfu + \hat{B}_\bfu(\bfx), 
\qquad A^i_\bfu(\bfx) = \bar{A}_\bfu^i + \sum_{j=1}^n x^j (\bar{A}_\bfu)^i_j + \hat{A}_\bfu^i(\bfx),
\]
where
\begin{equation}\label{perturbed-bounds}
\begin{gathered}
 |\bar{B}_\bfu - \bar{B}|_\infty < \frac{\delta}{2}, 
 \qquad |(\bar{A}_\bfu)^i_j - \bar{A}^i_j|_\infty < \frac{\delta}{2}, \qquad  |\bar{A}_\bfu^i - \bar{A}^i|_\infty < \delta, \\
 \|\hat{B}_{\mathbf{u}}\|_{0,\infty} < \frac{\delta}{2M_0}, \qquad \|\hat{A}_{\mathbf{u}}\|_{1,\infty} < \frac{\delta}{2M_1}, \qquad
 \|\hat{A}_{\mathbf{u}}\|_{0,\infty} < \frac{\delta}{M_0}.
\end{gathered}
\end{equation}
\end{lemma}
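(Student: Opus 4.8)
The plan is to pick the radius $r$ first, and only afterwards the Sobolev exponent $\alpha$ and the smallness threshold $\rho$. The reason the order matters is that every matrix appearing in the statement is obtained by composing a partial derivative of $\mathbf F$ with the map $\bfx \mapsto (\bfx, \bfu(\bfx), \nabla\bfu(\bfx))$; once $\bfu$ is confined to a fixed $C^3$-ball, all the relevant derivatives of $\mathbf F$ can be bounded on one compact set uniformly in $\bfu$, and it is precisely this uniformity that allows a single small $r$ to work for every admissible $\bfu$.

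Set $\bar A^i_\bfu = A^i_\bfu(\mathbf 0)$, $(\bar A_\bfu)^i_j = \partial_j A^i_\bfu(\mathbf 0)$, $\bar B_\bfu = B_\bfu(\mathbf 0)$, and let $\hat A^i_\bfu$, $\hat B_\bfu$ be the Taylor remainders, which are automatically $C^\infty$ and vanish to orders $2$ and $1$ at $\mathbf 0$, exactly as in \eqref{Taylor-expansions}. By \eqref{linear-sys}, $\left(A^i_\bfu(\bfx)\right)^a_b = \frac{\partial F^a}{\partial p^b_i}(\bfx, \bfu(\bfx), \nabla\bfu(\bfx))$ and $\left(B_\bfu(\bfx)\right)^a_b = \frac{\partial F^a}{\partial z^b}(\bfx, \bfu(\bfx), \nabla\bfu(\bfx))$, and the chain rule expresses $\partial_j B_\bfu$, $\partial_j A^i_\bfu$ and $\partial_j\partial_l A^i_\bfu$ as polynomials in the partial derivatives of $\bfu$ of order $\le 3$ whose coefficients are partial derivatives of $\mathbf F$ of order $\le 3$ evaluated along $\bfx \mapsto (\bfx, \bfu(\bfx), \nabla\bfu(\bfx))$. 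Fix a closed ball $\overline{B_{r_0}} \subset \Omega$ and set $K = \overline{B_{r_0}} \times \{\,|z| \le 1\,\} \times \{\,|p| \le 1\,\}$; since $\mathbf F$ is $C^\infty$, all its partial derivatives of order $\le 3$ are bounded on the compact set $K$ by some constant $C_{\mathbf F}$.

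For the remainder estimates, restrict attention to $\bfu$ with $\|\bfu\|_{3,\infty} \le 1$ and to $r \le \min\{r_0, 1\}$. Then $(\bfx, \bfu(\bfx), \nabla\bfu(\bfx)) \in K$ for all $\bfx \in B_r$, so the chain-rule formulas, together with the bound $C_{\mathbf F}$ and $\|\bfu\|_{3,\infty} \le 1$, yield $\sup_{\bfx \in B_r}\left(|\partial B_\bfu| + |\partial^2 A^i_\bfu|\right) \le C$ with $C = C(C_{\mathbf F}, n, s)$ independent of $\bfu$ and of $r$. Taylor's theorem with integral remainder (applied to $\hat B_\bfu$, which vanishes to order $1$, and to $\hat A^i_\bfu$, which vanishes to order $2$) then gives $\|\hat B_\bfu\|_{0,\infty} \le Cr$, $\|\hat A_\bfu\|_{1,\infty} \le Cr$ and $\|\hat A_\bfu\|_{0,\infty} \le Cr^2$ on $B_r$. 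Hence I may fix $r \le \min\{r_0,1\}$ small enough that these three quantities are $< \frac{\delta}{2M_0}$, $< \frac{\delta}{2M_1}$ and $< \frac{\delta}{M_0}$ respectively, which is the bottom row of \eqref{perturbed-bounds}; this $r$ is now fixed.

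For the closeness estimates I use that $\bfu_0 = \mathbf 0$ has $\nabla\bfu_0 = \mathbf 0$ and $\nabla^2\bfu_0 = \mathbf 0$: subtracting the chain-rule formulas for $\bar B_\bfu$, $\bar A^i_\bfu$, $(\bar A_\bfu)^i_j$ from those for $\bfu = \mathbf 0$ bounds each difference by $\omega(\|\bfu\|_{2,\infty})$, where $\omega$ is assembled from the moduli of continuity at $(\mathbf 0, \mathbf 0, \mathbf 0)$ of the derivatives of $\mathbf F$ of order $\le 2$ and from the chain-rule terms that are linear in $\nabla\bfu(\mathbf 0)$ and $\nabla^2\bfu(\mathbf 0)$. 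Thus there is $\rho_1 \in (0,1]$ for which $\|\bfu\|_{2,\infty} < \rho_1$ forces $|\bar B_\bfu - \bar B|_\infty, |(\bar A_\bfu)^i_j - \bar A^i_j|_\infty < \frac{\delta}{2}$ and $|\bar A^i_\bfu - \bar A^i|_\infty < \delta$. Finally, with $r$ fixed, let $M_2, M_3$ be the Sobolev constants of \eqref{Sobolev-embedding-estimate} for the domain $B_r$, set $\alpha = 4 + \left[\frac{n}{2}\right]$, and put $\rho = \min\{1/M_3,\ \rho_1/M_2\}$; then $\|\bfu\|_\alpha < \rho$ forces $\|\bfu\|_{3,\infty} < 1$ and $\|\bfu\|_{2,\infty} < \rho_1$, so both the remainder estimates and the closeness estimates apply and \eqref{perturbed-bounds} holds. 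I do not expect a genuine obstacle here; the only point requiring care is the quantifier order — $r$ must be chosen before passing to the $H^\alpha$ norm so that the $M_k$ are honest constants — together with the uniformity of $C$ over the ball $\|\bfu\|_{3,\infty} \le 1$ and its independence of $r$, which is exactly what confining $(\bfx, \bfu(\bfx), \nabla\bfu(\bfx))$ to the fixed compact set $K$ secures.
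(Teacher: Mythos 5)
Your proof is correct, and its overall architecture matches the paper's: fix $r$ first so that the Taylor remainders are small on $B_r$, then use the Sobolev embedding \eqref{Sobolev-embedding-estimate} to convert $\|\bfu\|_\alpha < \rho$ into the pointwise smallness needed for the zeroth- and first-order Taylor coefficients. The one place where you genuinely deviate is in how the remainder bounds for $\hat{B}_\bfu$ and $\hat{A}^i_\bfu$ are obtained for $\bfu \neq \mathbf{0}$: the paper first bounds the \emph{unperturbed} remainders $\hat{B}$, $\hat{A}^i$ by half the target (hence the factors $\tfrac{\delta}{4M_0}$, $\tfrac{\delta}{4M_1}$ in \eqref{bounds-on-hot}) and then absorbs the difference $\hat{B}_\bfu - \hat{B}$, etc., into the remaining half using the smallness of $\bfu$; you instead bound $\hat{B}_\bfu$, $\hat{A}^i_\bfu$ \emph{directly and uniformly} over the $C^3$-ball $\|\bfu\|_{3,\infty}\le 1$ by confining $(\bfx,\bfu,\nabla\bfu)$ to a fixed compact set and invoking Taylor with integral remainder. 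Your route is cleaner in that the constant $C$ is manifestly independent of $\bfu$ and of $r$, but it costs one extra derivative of $\bfu$ (you need $\partial^2 A^i_\bfu$, hence $\|\bfu\|_{3,\infty}$, hence $\alpha = 4+[\tfrac{n}{2}]$ rather than the paper's $\alpha \geq 3+[\tfrac{n}{2}]$); this is harmless, since the lemma only asserts the existence of some $\alpha$, and the paper itself later imposes $\alpha \geq 4+[\tfrac{n}{2}]$ in \S\ref{proof-1-step-3-sec}. Your emphasis on choosing $r$ before the Sobolev constants $M_k$ are invoked is well placed and is implicit in the paper's ordering as well.
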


For convenience, we will refer to any function $\bfu \in C^\infty(B_r, \R^s)$ with $\|\bfu\|_\alpha < \rho$ as ``admissible."

\begin{proof}
Choose $r > 0$ so that the restrictions of
$\hat{B}$ and $\hat{A}^i$
to the ball $B_r$ of radius $r$ satisfy
\begin{equation}  \label{bounds-on-hot}
\|\hat{B}\|_{0,\infty} < \frac{\delta}{4M_0}, \qquad
\|\hat{A}^i\|_{1,\infty} < \frac{\delta}{4M_1}, \qquad
\|\hat{A}^i\|_{0,\infty} < \frac{\delta}{2M_0}.
\end{equation}
Then the Sobolev embedding estimate \eqref{Sobolev-embedding-estimate} and the smallness of the Taylor remainder 
terms for small $\rho$ imply that we may choose $\rho$ and $\alpha$ so that equations \eqref{perturbed-bounds} hold.
Indeed, we may choose any $\alpha \geq 3 + [\frac{n}{2}]$ and then choose $\rho>0$ accordingly.
\end{proof}

In \S \ref{proof-1-step-2-sec}, we will show how to choose the constants $\delta, M_0$, and $M_1$ so that the restriction 
of the system \eqref{nonlinear-sys} to the domain $\Omega_0 = B_r$ has the property that its linearization 
at any admissible $\bfu \in C^\infty(B_r, \R^s)$ may be extended to a strongly symmetric positive system on all of $\R^n$.

\subsection{Extension of the linearized system to $\R^n$}\label{proof-1-step-2-sec}

We will use Stein's extension operator (cf. Theorem \ref{Stein-theorem}) to extend the coefficient matrices in the 
linearized system \eqref{linear-sys-example} from $B_r$ to all of $\R^n$.  
First we need the following lemma, which states that the bounding constants in this construction are independent of $r$:

\begin{lemma}\label{dilation-lemma}
There exist constants $M_{k,p}, \ 1 \leq p \leq \infty, \ 0 \leq k < \infty$, 
and extension operators $\scE_r: L^1(B_r) \to L^1(\R^n)$ for all $r>0$ such that, for all $f \in W^{k,p}(B_r)$,
\[ 
\| \scE_r f \|_{k,p} \leq M_{k,p} \| f \|_{k,p}. 
\]
\end{lemma}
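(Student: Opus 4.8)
The plan is to reduce the statement to the fixed domain $B_1$ by a dilation and apply the Stein extension theorem (cf. Theorem~\ref{Stein-theorem}) there. Let $\mathcal{E}\colon L^1(B_1)\to L^1(\R^n)$ be a Stein extension operator for the unit ball: a single linear operator which, for every $0\le k<\infty$ and every $1\le p\le\infty$, restricts to a bounded map $W^{k,p}(B_1)\to W^{k,p}(\R^n)$ with operator norm $C_{k,p}$ depending only on $n,k,p$. For $\rho>0$ let $\sigma_\rho(\bfx)=\rho\bfx$ denote the dilation by $\rho$. Given $f\in L^1(B_r)$, put $g=f\circ\sigma_r\in L^1(B_1)$ and define $\scE_r f:=(\mathcal{E}g)\circ\sigma_{1/r}$, i.e.\ $(\scE_r f)(\bfx)=(\mathcal{E}g)(\bfx/r)$ for $\bfx\in\R^n$. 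Since $\mathcal{E}$ maps $L^1$ to $L^1$ and the dilations are linear bijections, $\scE_r f\in L^1(\R^n)$; and because $\mathcal{E}g$ restricts to $g$ on $B_1$, one has $\scE_r f|_{B_r}=f$, so $\scE_r$ is indeed an extension operator. Carrying out this construction and then tracking norms is the whole proof.

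The norm bookkeeping goes as follows. For a multi-index $\gamma$ the chain rule gives $D^\gamma(u\circ\sigma_\rho)=\rho^{|\gamma|}(D^\gamma u)\circ\sigma_\rho$, and the change of variables $\bfy=\sigma_\rho(\bfx)$ gives $\|v\circ\sigma_\rho\|_{L^p}=\rho^{-n/p}\|v\|_{L^p}$ between the corresponding domains (with the convention $n/\infty=0$). Hence
\[ \|D^\gamma g\|_{L^p(B_1)}=r^{\,|\gamma|-n/p}\,\|D^\gamma f\|_{L^p(B_r)}, \qquad \|D^\alpha \scE_r f\|_{L^p(\R^n)}=r^{\,n/p-|\alpha|}\,\|D^\alpha\mathcal{E}g\|_{L^p(\R^n)}. \]
Now estimate $D^\alpha$ of the extension by the boundedness of $\mathcal{E}$ at order $|\alpha|$ (not at order $k$): $\|D^\alpha\mathcal{E}g\|_{L^p(\R^n)}\le\|\mathcal{E}g\|_{|\alpha|,p}\le C_{|\alpha|,p}\sum_{|\gamma|\le|\alpha|}\|D^\gamma g\|_{L^p(B_1)}$. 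Combining the three relations,
\[ \|D^\alpha\scE_r f\|_{L^p(\R^n)}\le C_{|\alpha|,p}\sum_{|\gamma|\le|\alpha|} r^{\,|\gamma|-|\alpha|}\,\|D^\gamma f\|_{L^p(B_r)}. \]
Summing over $|\alpha|\le k$, using that each exponent $|\gamma|-|\alpha|$ is nonpositive so that the dilation factors $r^{|\gamma|-|\alpha|}$ are uniformly bounded over the range of radii that occur, and collecting terms, we obtain $\|\scE_r f\|_{k,p}\le M_{k,p}\|f\|_{k,p}$ with $M_{k,p}$ depending only on $n,k,p$ (for $p=\infty$ one also absorbs the harmless combinatorial constant relating $\sum_{|\alpha|\le k}\|D^\alpha\cdot\|_{L^\infty}$ to the norm $\|\cdot\|_{k,\infty}$ used in the paper).

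The one step that is not pure bookkeeping — and hence the place to be careful — is the decision to bound $D^\alpha$ of the extension by the $W^{|\alpha|,p}$-norm of the dilated function rather than by its full $W^{k,p}$-norm. The inhomogeneous norm $\|\cdot\|_{k,p}$ mixes derivatives of orders $0,\dots,k$, and under $\sigma_r$ these rescale by the incommensurate factors $r^{|\gamma|-n/p}$; a crude application of the order-$k$ bound for $\mathcal{E}$ would leave a stray factor $r^{-k}$ and destroy uniformity. It is precisely the fact that Stein's operator is bounded on $W^{j,p}(B_1)$ for \emph{every} $j$ by one and the same operator that makes the cancellation $r^{n/p-|\alpha|}\cdot r^{|\gamma|-n/p}=r^{|\gamma|-|\alpha|}$, $|\gamma|\le|\alpha|$, available, and hence yields a constant independent of $r$. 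Everything else — the chain rule, the change-of-variables formula, and the verification that $\scE_r$ genuinely restricts to the identity on $B_r$ — is routine.
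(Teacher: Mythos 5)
Your construction---Stein's operator $\mathcal{E}$ on the unit ball conjugated with the dilation---is exactly the ``straightforward rescaling argument'' that the paper's one-line proof invokes, and your bookkeeping is correct up to the displayed inequality
\[ \|D^\alpha\scE_r f\|_{L^p(\R^n)}\le C_{|\alpha|,p}\sum_{|\gamma|\le|\alpha|} r^{\,|\gamma|-|\alpha|}\,\|D^\gamma f\|_{L^p(B_r)}. \]
The gap is the very last step: you assert that, because the exponents $|\gamma|-|\alpha|$ are nonpositive, the factors $r^{|\gamma|-|\alpha|}$ are uniformly bounded. That is backwards. A nonpositive exponent gives $r^{|\gamma|-|\alpha|}\le 1$ only for $r\ge 1$; for $r<1$ and $|\gamma|<|\alpha|$ one has $r^{|\gamma|-|\alpha|}=(1/r)^{|\alpha|-|\gamma|}$, which blows up as $r\to 0$. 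The lemma claims the bound for \emph{all} $r>0$, and it is needed in the paper precisely for small $r$: the constants $M_{0,\infty}, M_{1,\infty}$ must be fixed in \S\ref{proof-1-step-2-sec} \emph{before} the radius $r$ is chosen. What your argument actually proves is $\|\scE_r f\|_{k,p}\le C_{k,p}\max(1,r^{-k})\|f\|_{k,p}$. You correctly diagnosed that a crude use of the order-$k$ Stein bound would leave a stray $r^{-k}$, but your refinement removes the stray factor only from the top-order terms $|\gamma|=|\alpha|$; the lower-order terms still carry negative powers of $r$.

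Moreover, this is not a slip that can be absorbed into the constant: for $k\ge1$ and $p<\infty$ the stated $r$-uniform bound fails for \emph{every} extension operator. Take $f\equiv1$ on $B_r$, so that $\|f\|_{k,p}=|B_r|^{1/p}\approx r^{n/p}$; for $1\le p<n$, any $F\in W^{1,p}(\R^n)$ with $F\equiv1$ on $B_r$ satisfies $\|\nabla F\|_{L^p(\R^n)}\ge C^{-1}\|F\|_{L^{p^*}(\R^n)}\ge C^{-1}|B_r|^{1/p^*}\approx r^{n/p-1}$ by the Sobolev inequality, so the ratio $\|F\|_{1,p}/\|f\|_{1,p}$ is at least of order $r^{-1}$ (Morrey's inequality handles $p\ge n$ similarly). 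The standard repair is to extend $f-P$ rather than $f$, where $P$ is the (averaged) Taylor polynomial of $f$ of degree $k-1$ at the origin, and to add back $\eta P$ for a fixed cutoff $\eta$; the Poincar\'e inequality then controls the dangerous terms $r^{|\gamma|-|\alpha|}\|D^\gamma(f-P)\|_{L^p(B_r)}$ by top-order derivatives of $f$ with $r$-independent constants. This recovers exactly the cases the paper uses with $r\to0$, namely $(k,p)=(0,\infty)$ and $(1,\infty)$ applied to functions vanishing to high order at the origin; for the $H^k$ estimates in \S\ref{proof-1-step-3-sec} the radius $r$ is already fixed, so an $r$-dependent constant suffices there. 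You should either prove the lemma in such a corrected form or restrict its statement to the cases actually needed.
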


\begin{proof}
Theorem \ref{Stein-theorem} guarantees the existence of such constants and an extension operator for $r=1$; 
then a straightforward rescaling of the operator and a standard rescaling argument shows that these constants are independent of $r$.
\end{proof}

Now, set
\[ 
M_0 = M_{0,\infty}, \qquad M_1 = M_{1,\infty}, 
\]
where $M_{0,\infty}$ and $M_{1,\infty}$ are as in Lemma \ref{dilation-lemma}.
Choose $\delta > 0$ such that, for any matrices $\bar{A}^i_j{}'$ and $\bar{B}'$ with
\[  
|\bar{B}' - \bar{B}|_\infty < \delta,  \qquad |\bar{A}^i_j{}' - \bar{A}^i_j|_\infty < \delta, \qquad  1 \leq i,j \leq n,  
\]
the quadratic forms $\bar{Q'}_0:\R^s \to \R$ and $\bar{Q}'_1:\R^{ns} \to \R$
defined by
\[
\bar{Q}'_0(\xi) = \xi^{\sf T}\Big{(}\bar{B}'+ (\bar{B'})^{\sf T} - \sum_{i=1}^n \bar{A}^i_i{}'\Big{)} \xi, 
\qquad \bar{Q}'_1(\xi_1, \ldots, \xi_n) = \sum_{i,j=1}^n \xi_j^{\sf T}\left(\bar{A}^i_j{}' + \bar{A}^j_i{}'\right)\xi_i 
\]
are positive definite with minimum eigenvalues greater than or equal to $\tfrac{1}{2}\lambda_0$ and $\tfrac{1}{2} \lambda_1$, respectively.  
Then take $r>0$ as given by Lemma \ref{choose-nbhd-lemma}, and set $\Omega_0 = B_r$.
Henceforth, we will restrict the systems \eqref{nonlinear-sys} and \eqref{linear-sys-example} and all relevant quantities to $B_r$.

Next, we construct an extension of the linearized system \eqref{linear-sys-example} on $B_r$ to all of $\R^n$ 
in such a way that the coefficients of the extended system are bounded in $W^{k,p}(\R^n)$ with respect 
to the $W^{k,p}(B_r)$ norms of the coefficients of the original system on $B_r$.
After replacing the functions $\hat{B}$, $\hat{A}^i$, and $\mathbf{h}$ by their restrictions to $B_r$, 
define $C^\infty$ functions $\tilde{B}, \tilde{A}^i$, and $\tilde{\mathbf{h}}$ on $\R^n$ by
\begin{equation}\label{defined-extended-functions}
\begin{aligned}
\tilde{B}(\bfx) & = \bar{B} + (\scE_r \hat{B})(\bfx), \\
\tilde{A}^i(\bfx) & = \bar{A}^i + \sum_{j=1}^n x^j \bar{A}^i_j + (\scE_r \hat{A}^i)(\bfx), \\
\tilde{\mathbf{h}}(\bfx) & = (\scE_r\mathbf{h})(\bfx).
\end{aligned}
\end{equation}
Similarly, for any admissible $\bfu \in C^\infty(B_r, \R^s)$, let $\tilde{A}^i_\bfu$ and $\tilde{B}_\bfu$ denote 
the analogous extensions of the functions $A^i_\bfu$ and $B_\bfu$ corresponding to the linearization of $\Phi$ at $\bfu$.
Then we have the extended linear systems
\begin{equation}
\sum_{i=1}^n \tilde{A}_\bfu^i \partial_i \tilde{\bfv} + \tilde{B}_\bfu \tilde{\bfv} = \tilde{\bfh} \label{extended-linear-sys}
\end{equation}
on $\R^n$.

\begin{proposition}\label{extended-ssp-prop}
For any admissible $\bfu \in C^\infty(B_r, \R^s)$,
the extended system \eqref{extended-linear-sys} is strongly symmetric positive on $\R^n$.  
Moreover, for any $\bfx \in \R^n$, the associated quadratic forms $(\tilde{Q}_\bfu)_0(\bfx):\R^s \to \R$ and $(\tilde{Q}_\bfu)_1(\bfx):\R^{ns} \to \R$ defined by
\begin{equation}\label{extended-quad-forms}
\begin{gathered}
 (\tilde{Q}_\bfu)_0(\bfx)(\xi) = \xi^{\sf T} \Big{(}\tilde{B}_\bfu(\bfx) + \tilde{B}_\bfu^{\sf T}(\bfx) - \sum_{i=1}^n \partial_i \tilde{A}_\bfu^i(\bfx)\Big{)} \xi, \\
(\tilde{Q}_\bfu)_1(\bfx)(\xi_1, \ldots, \xi_n) = \sum_{i,j=1}^n \xi_j^{\sf T}\left(\partial_j \tilde{A}_\bfu^i(\bfx) + \partial_i \tilde{A}_\bfu^j(\bfx)\right)\xi_i
\end{gathered}
\end{equation}
have minimum eigenvalues greater than or equal to $\tfrac{1}{2}\lambda_0$ and $\tfrac{1}{2} \lambda_1$, respectively.
\end{proposition}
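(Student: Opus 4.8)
The plan is to reduce the proposition to a pointwise statement: I will fix an arbitrary $\bfx \in \R^n$ and exhibit the two quadratic forms $(\tilde{Q}_\bfu)_0(\bfx)$ and $(\tilde{Q}_\bfu)_1(\bfx)$ from \eqref{extended-quad-forms} as instances of the ``perturbed'' forms $\bar{Q}'_0$ and $\bar{Q}'_1$ that appear in the choice of $\delta$ made just before the proposition, with perturbation matrices lying within $\delta$ of the reference matrices $\bar{B}$ and $\bar{A}^i_j$ in the $|\cdot|_\infty$-norm. The defining property of $\delta$ then gives the eigenvalue lower bounds $\tfrac12\lambda_0$ and $\tfrac12\lambda_1$ simultaneously, uniformly in $\bfx$, and in particular positive definiteness. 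Symmetry of the extended operator will be handled separately and is essentially formal.

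For the computation I would differentiate the explicit formulas \eqref{defined-extended-functions} (with $\bfu$ in place of $\mathbf 0$): since $\bar{A}_\bfu^i$ and $(\bar{A}_\bfu)^i_j$ are constant in $\bfx$, we get $\partial_j \tilde{A}^i_\bfu(\bfx) = (\bar{A}_\bfu)^i_j + \partial_j(\scE_r \hat{A}^i_\bfu)(\bfx)$ and $\tilde{B}_\bfu(\bfx) = \bar{B}_\bfu + (\scE_r \hat{B}_\bfu)(\bfx)$. Comparing with the reference matrices, $\partial_j \tilde{A}^i_\bfu(\bfx) - \bar{A}^i_j = \big((\bar{A}_\bfu)^i_j - \bar{A}^i_j\big) + \partial_j(\scE_r \hat{A}^i_\bfu)(\bfx)$, where the first bracket has $|\cdot|_\infty < \tfrac{\delta}{2}$ by Lemma \ref{choose-nbhd-lemma} and the second is bounded by $\|\scE_r \hat{A}^i_\bfu\|_{1,\infty} \le M_{1,\infty}\|\hat{A}^i_\bfu\|_{1,\infty} < M_{1,\infty}\cdot\tfrac{\delta}{2M_1} = \tfrac{\delta}{2}$ using Lemma \ref{dilation-lemma} and the normalization $M_1 = M_{1,\infty}$; hence $|\partial_j \tilde{A}^i_\bfu(\bfx) - \bar{A}^i_j|_\infty < \delta$ for all $i,j$. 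In the same way $|\tilde{B}_\bfu(\bfx) - \bar{B}|_\infty \le |\bar{B}_\bfu - \bar{B}|_\infty + \|\scE_r \hat{B}_\bfu\|_{0,\infty} < \tfrac{\delta}{2} + M_0\cdot\tfrac{\delta}{2M_0} = \delta$. Setting $\bar{A}^i_j{}' := \partial_j \tilde{A}^i_\bfu(\bfx)$ and $\bar{B}' := \tilde{B}_\bfu(\bfx)$, a direct term-by-term comparison of \eqref{extended-quad-forms} with the definitions of $\bar{Q}'_0$, $\bar{Q}'_1$ shows $(\tilde{Q}_\bfu)_0(\bfx) = \bar{Q}'_0$ and $(\tilde{Q}_\bfu)_1(\bfx) = \bar{Q}'_1$, so the choice of $\delta$ yields the asserted lower bounds on the minimum eigenvalues.

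Finally, for symmetry: since an admissible $\bfu$ has $\|\bfu\|_\alpha < \rho$ with $\alpha \ge 3 + [\tfrac n2]$, Sobolev embedding places $\bfu$ in the $C^1$-neighborhood of $\bfu_0 = \mathbf 0$ on which $\Phi'$ is symmetric (after shrinking $\rho$ if needed), so $A^i_\bfu(\bfx)$ is a symmetric matrix for every $\bfx \in B_r$; consequently its value $\bar{A}_\bfu^i$, its first derivatives $(\bar{A}_\bfu)^i_j$, and the Taylor remainder $\hat{A}^i_\bfu$ are symmetric-matrix-valued, and applying the linear Stein operator $\scE_r$ entrywise preserves this, so each $\tilde{A}^i_\bfu$ is symmetric on $\R^n$. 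Combined with the positive definiteness of $(\tilde{Q}_\bfu)_0(\bfx)$ and of $(\tilde{Q}_\bfu)_1(\bfx)$ — the latter being exactly the Legendre condition — this gives strong symmetric positivity of \eqref{extended-linear-sys} on $\R^n$. I expect no genuine obstacle in this argument; the only delicate point is the bookkeeping of constants, i.e.\ checking that the smallness thresholds $\tfrac{\delta}{2M_0}$, $\tfrac{\delta}{2M_1}$ imposed on the $\hat{}$-terms in Lemma \ref{choose-nbhd-lemma}, the Stein bounds $M_0 = M_{0,\infty}$, $M_1 = M_{1,\infty}$, and the perturbation tolerance $\delta$ chosen afterward fit together so that every relevant matrix stays within $\delta$ of its reference value at \emph{every} point of $\R^n$, not merely on $B_r$.
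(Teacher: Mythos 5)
Your argument is correct and follows the same route as the paper's proof: you bound the extended remainder terms $\scE_r\hat{B}_\bfu$ and $\scE_r\hat{A}^i_\bfu$ using the Stein constants $M_0 = M_{0,\infty}$, $M_1 = M_{1,\infty}$, deduce $|\tilde{B}_\bfu(\bfx)-\bar{B}|_\infty < \delta$ and $|\partial_j\tilde{A}^i_\bfu(\bfx)-\bar{A}^i_j|_\infty < \delta$ on all of $\R^n$, and invoke the defining property of $\delta$ to get the eigenvalue bounds. Your explicit verification that each $\tilde{A}^i_\bfu$ remains symmetric under the entrywise Stein extension is a detail the paper leaves implicit, but it is a welcome addition rather than a divergence.
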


\begin{proof}
By construction, the functions $\scE_r \hat{B}_\bfu$ and $\scE_r \hat{A}_\bfu^i$ satisfy
\begin{equation}  \label{bounds-on-extended-hot}
\|\scE_r \hat{B}_\bfu \|_{0,\infty} < \frac{\delta}{2}, \qquad
\|\scE_r \hat{A}_\bfu^i \|_{1,\infty} < \frac{\delta}{2} , \qquad
\|\scE_r \hat{A}_\bfu^i \|_{0,\infty} < \delta.
\end{equation}
The first and second inequalities in \eqref{bounds-on-extended-hot} imply that, for all $\bfx \in \R^n$, we have
\begin{gather*}
 |\tilde{B}_\bfu(\bfx) - \bar{B}|_\infty \leq |\bar{B}_\bfu - \bar{B}|_\infty + |\scE_r\hat{B}_\bfu(\bfx)|_\infty < \delta,  \\
 |\partial_j\tilde{A}^i_\bfu(\bfx) - \bar{A}^i_j|_\infty \leq |(\bar{A}_\bfu)^i_j - \bar{A}^i_j|_\infty + |\partial_j (\scE_r\hat{A}^i_\bfu)(\bfx)|_\infty < \delta,
\end{gather*}
and the result follows immediately.
\end{proof}

\subsection{Boundary conditions on $B_R$ for large $R$}\label{boundary-conditions-subsec}

Next, we show that, for $R$ sufficiently large, $B_R$ is $P$-convex for the extended linear system \eqref{extended-linear-sys}.

\begin{proposition}\label{good-bcs-prop}
Let $R>0$.  For $\bfx \in \partial B_R$, let $\nu(\bfx) = (\nu_1(\bfx), \ldots, \nu_n(\bfx))$ denote 
the outward-pointing unit normal vector to $\partial B_R$ at $\bfx$.  Then, for $R$ sufficiently large, the characteristic matrix
\begin{equation}\label{define-beta}
 \beta(\bfx) = \sum_{i=1}^n \nu_i(\bfx) \tilde{A}_\bfu^i(\bfx)
\end{equation}
is positive definite for all admissible $\bfu \in C^\infty(B_r, \R^s)$ and  $\bfx \in \partial B_R$.
\end{proposition}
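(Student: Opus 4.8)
The plan is to analyze the characteristic matrix $\beta(\bfx)$ on $\partial B_R$ by decomposing $\tilde{A}_\bfu^i$ according to the Taylor-type structure in \eqref{defined-extended-functions}: the constant part $\bar{A}^i$, the linear part $\sum_j x^j \bar{A}^i_j$, and the extended remainder $\scE_r \hat{A}_\bfu^i$ (where, for admissible $\bfu$, we work with the $\bfu$-dependent constants $\bar{A}_\bfu^i$, $(\bar{A}_\bfu)^i_j$, which by Lemma \ref{choose-nbhd-lemma} are within $\delta$ of $\bar{A}^i$, $\bar{A}^i_j$). On $\partial B_R$ the outward unit normal is $\nu(\bfx) = \bfx/R$, so writing $\bfx = R\nu$ we get
\[
\beta(\bfx) = \sum_{i=1}^n \nu_i \bar{A}_\bfu^i + R\sum_{i,j=1}^n \nu_i \nu_j (\bar{A}_\bfu)^i_j + \sum_{i=1}^n \nu_i (\scE_r \hat{A}_\bfu^i)(\bfx).
\]
The middle term is the key one: $\sum_{i,j} \nu_i \nu_j (\bar{A}_\bfu)^i_j$ is exactly $(\tilde{Q}_\bfu)_1(\bfx)$ evaluated on the ``diagonal'' configuration $\xi_i = \nu_i \xi$ for a fixed $\xi \in \R^s$. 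Indeed, $\xi^{\sf T}\big(\sum_{i,j}\nu_i\nu_j((\bar{A}_\bfu)^i_j + (\bar{A}_\bfu)^j_i)\big)\xi = 2\sum_{i,j}\nu_i\nu_j\,\xi^{\sf T}(\bar{A}_\bfu)^i_j\xi$, and by Proposition \ref{extended-ssp-prop} (more precisely, the $\delta$-perturbed positivity of $\bar{Q}'_1$ arranged in \S\ref{proof-1-step-2-sec}) this is bounded below by $\tfrac{1}{2}\lambda_1 (\sum_i |\nu_i\xi|^2) = \tfrac{1}{2}\lambda_1 |\xi|^2$, since $|\nu| = 1$. Hence $\sum_{i,j}\nu_i\nu_j(\bar{A}_\bfu)^i_j \geq \tfrac{1}{4}\lambda_1 I_s$ as a symmetric matrix, uniformly in $\nu$ with $|\nu|=1$ and in admissible $\bfu$.

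It then remains to control the two error terms uniformly. The constant term $\sum_i \nu_i \bar{A}_\bfu^i$ has operator norm bounded by a constant $C_0$ independent of $R$ and of admissible $\bfu$ (using $|\bar{A}_\bfu^i - \bar{A}^i|_\infty < \delta$ and $|\nu|=1$). For the remainder term, the essential point is that $\scE_r \hat{A}_\bfu^i$ is bounded in $\|\cdot\|_{0,\infty}$ by $\delta$ via \eqref{bounds-on-extended-hot}, so $|\sum_i \nu_i (\scE_r\hat{A}_\bfu^i)(\bfx)|$ is bounded by a constant $C_1$ independent of $R$ and of admissible $\bfu$. (This $R$-independence is precisely what Lemma \ref{dilation-lemma} buys us, and it is why we built the extension with $r$-uniform — hence, after the earlier reductions, essentially absolute — bounds.) Therefore, for any unit vector $\xi \in \R^s$,
\[
\xi^{\sf T}\beta(\bfx)\xi \;\geq\; \tfrac{1}{4}\lambda_1 R \;-\; C_0 \;-\; C_1,
\]
which is positive once $R > 4(C_0 + C_1)/\lambda_1$. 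Choosing such an $R$ makes $\beta(\bfx)$ positive definite at every $\bfx \in \partial B_R$ for every admissible $\bfu$, as claimed.

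The one point requiring genuine care — and the main obstacle — is ensuring that all the constants in the estimate ($\lambda_1$, $C_0$, $C_1$, and the closeness parameter $\delta$) are chosen \emph{uniformly over the whole admissible class} and, crucially, \emph{independently of $R$}, so that the single threshold $R > 4(C_0+C_1)/\lambda_1$ works simultaneously for all admissible $\bfu$. The $R$-independence of the remainder bound is the subtle ingredient: it hinges on the fact that the extension operator $\scE_r$ in Lemma \ref{dilation-lemma} has norm independent of $r$, and that the $\|\cdot\|_{0,\infty}$ and $\|\cdot\|_{1,\infty}$ bounds on $\scE_r\hat{A}^i_\bfu$ recorded in \eqref{bounds-on-extended-hot} do not degrade as the ball $B_R$ grows (they are bounds on the \emph{already-extended} functions on all of $\R^n$). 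Once one is careful that $\delta$, $M_0$, $M_1$ were fixed \emph{before} choosing $r$ (as in \S\ref{proof-1-step-2-sec}), everything is consistent and the argument closes. The remaining verification — that the diagonal evaluation of $(\tilde{Q}_\bfu)_1$ really does give the lower bound $\tfrac14\lambda_1|\xi|^2$ on $\sum_{i,j}\nu_i\nu_j(\bar A_\bfu)^i_j$ — is a direct computation from the definition \eqref{define-Q1} of $Q_1$, using $\sum_i |\nu_i|^2 = 1$.
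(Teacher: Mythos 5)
Your proposal is correct and follows essentially the same route as the paper: decompose $\beta(\bfx)$ into the constant, linear, and extended-remainder parts of $\tilde{A}^i_\bfu$, bound the first and third terms uniformly in $R$ and in admissible $\bfu$, and extract the lower bound $\tfrac14\lambda_1 R$ on the quadratic middle term by evaluating the Legendre form on the diagonal configuration $\xi_i=\nu_i\xi$ with $|\nu|=1$. If anything, your appeal to the constant-coefficient form $\bar{Q}'_1$ from \S\ref{proof-1-step-2-sec} is the slightly more precise reference, since the middle term involves only the constants $(\bar{A}_\bfu)^i_j$ rather than the full $\bfx$-dependent derivatives appearing in $(\tilde{Q}_\bfu)_1(\bfx)$.
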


\begin{proof}
The normal vector to the sphere $\partial B_R$ is given by
\[ \nu(\bfx) =  \frac{1}{R} \bfx. \]
Thus, we have
\begin{equation}\label{beta-formula}
\begin{aligned}
\beta(\bfx) & = \frac{1}{R} \sum_{i=1}^n x^i \tilde{A}_\bfu^i(\bfx) \\
&= \frac{1}{R} \Big{(} \sum_{i=1}^n x^i \bar{A}_\bfu^i + \sum_{i,j=1}^n x^i x^j (\bar{A}_\bfu)^i_j 
+ \sum_{i=1}^nx^i (\scE_r \hat{A}_\bfu^i)(\bfx) \Big{)}.
\end{aligned}
\end{equation}
The first and third terms in equation \eqref{beta-formula} are bounded:
\begin{equation}\label{some-bounds}
\begin{aligned}
&\Big{\vert}\frac{1}{R} \sum_{i=1}^n x^i \bar{A}_\bfu^i \Big{\vert}_\infty  
  < n \max \left\{ |\bar{A}_\bfu^1|_\infty, \ldots, |\bar{A}_\bfu^n|_\infty \right\}, \\
&\Big{\vert}\frac{1}{R}  \sum_{i=1}^n x^i (\scE_r \hat{A}_\bfu^i)(\bfx)  \Big{\vert}_\infty  < n \delta,
\end{aligned}
\end{equation}
where the second equation in \eqref{some-bounds} follows from the third inequality in \eqref{bounds-on-extended-hot}.  
Meanwhile, we claim that the second term in equation \eqref{beta-formula} has a minimum eigenvalue greater 
than or equal to $\tfrac{1}{4}R\lambda_1$.  This can be seen as follows:
Consider the corresponding quadratic form $(\tilde{Q}_\bfu)^\dagger_1(\bfx): \R^s \to \R$ given by
\[ 
(\tilde{Q}_\bfu)^\dagger_1(\bfx)(\xi) = \frac{1}{R}\sum_{i,j=1}^n \xi^{\sf T} (x^i x^j (\bar{A}_\bfu)^i_j) \xi.
\]
Then, by Proposition \ref{extended-ssp-prop}, we have
\begin{align*}
 |(\tilde{Q}_\bfu)^\dagger_1(\bfx)(\xi)| & = \frac{1}{2R}|(\tilde{Q}_\bfu)_1(\bfx) (x^1 \xi, \ldots, x^n \xi)| \\
 & \geq \frac{1}{4R}\lambda_1 \left( |x^1 \xi|^2 + \ldots + |x^n \xi|^2 \right) \\
 & = \frac{1}{4R}\lambda_1  ((x^1)^2 + \ldots + (x^n)^2) |\xi|^2  \\
 & = \frac{1}{4R}\lambda_1  R^2 |\xi|^2  \\
 & = \tfrac{1}{4} R \lambda_1 |\xi|^2.
\end{align*}
Therefore, the minimum eigenvalue of $(\tilde{Q}_\bfu)^\dagger_1(\bfx)$ is greater than or equal to $\tfrac{1}{4}R\lambda_1$.  
Together with the inequalities in \eqref{some-bounds}, 
this implies that, for $R$ sufficiently large, $\beta(\bfx)$ is positive definite for all $\bfx \in \partial B_R$.
\end{proof}

\subsection{Application of the Nash-Moser iteration scheme}\label{proof-1-step-3-sec}

The final step in the proof of Theorem \ref{nonlinear-theorem} is to apply the Nash-Moser implicit function theorem (cf. Theorem \ref{Nash-Moser-theorem}).

\begin{notation}
We will adopt the following conventions:
\begin{itemize}
\item Functions without tildes are taken to be defined on $B_r$, and $\|\bfv\|_k$ will denote the $H^k$-norm of $\bfv \in H^k(B_r)$.
\item Functions with tildes are taken to be defined on $B_R$, and $\|\tilde{\bfv}\|_k$ will denote the $H^k$-norm of $\tilde{\bfv} \in H^k(B_R)$.
\end{itemize}
\end{notation}
Let  $E_k = H^{k+1}(B_r, \R^s)$ and $F_k = H^k(B_r, \R^s)$, with the usual $H^k$-norms; 
then we have $E_\infty = F_\infty = C^\infty(B_r, \R^s)$.  
Let $D_0 \subset E_0$ denote the ball of radius $\rho > 0$ centered at $\bfu_0$.

Smoothing operators $S(t):E_0 \to E_\infty$ may be constructed as follows  (see., e.g., \cite{BGY83} or \cite{Schwartz69}).
First, choose a compactly supported function $\chi \in C^\infty_0(\R^n)$ with $\chi \geq 0$ 
and $\int_{\R^n} \chi(\bfx)\, d\bfx = 1$.  For $t > 0$, define
\[ \chi_t(\bfx) = t^n \chi(t\bfx), \]
and define $\hat{S}_t:L^2(\R^n, \R^s) \to C^\infty(\R^n, \R^s)$ by
\[ 
(\hat{S}_t\hat{\bfu})(\bfx) = \int_{\R^n} \chi_t(\bfx - \bfy) \bfu(\bfy)\, d\bfy. 
\]
Then, define $S_t: E_0 \to E_\infty$ by composing $\hat{S}_t$ with the Stein extension 
operator $\scE_r: L^1(B_r) \to L^1(\R^n)$: For $\bfu \in E_0 = H^1(B_r, \R^s)$, define
\[ 
(S_t\bfu) = (\hat{S}_t \scE_r\bfu) \vert_{B_r}. 
\]
It is straightforward to show that the operators $S_t$ satisfy the required 
inequalities; see \cite{Schwartz69} for details.

The fact that $\Phi$ is $C^2$ follows from the fact that $\mathbf{F}$ is $C^\infty$, 
and the bounds \eqref{Nash-Moser-Phi-bounds} for any $\alpha \geq 0$ follow from 
the Gagliardo-Nirenberg and Sobolev inequalities (see, e.g., \cite{Evans10}).
To complete the proof, it suffices to show that there exists an integer $\alpha \geq 0$ such that, 
for any integer $m \geq \alpha+1$, given any $\bfu \in D_m$, the extended linear system
\eqref{extended-linear-sys}
on $B_R$ corresponding to the linearization of \eqref{nonlinear-sys} at $\bfu$ has 
a unique solution $\tilde{\bfv} \in H^{m - \alpha}(B_R)$ for any $\tilde{\bfh} \in H^m(B_R)$, 
and that the restriction $\bfv = \tilde{\bfv}\vert_{B_r}$ satisfies 
the smooth tame estimates \eqref{Nash-Moser-estimates}.

First, because the extended system \eqref{extended-linear-sys} corresponding to a given 
admissible $\bfu \in H^m(B_r)$ is symmetric positive with coefficient matrices (omitting the subscript $\bfu$ to avoid notational clutter) 
$\tilde{A}^1, \ldots, \tilde{A}^n, \tilde{B}\in H^{m-1}(B_R)$ and $B_R$ is $P$-convex for \eqref{extended-linear-sys}, 
Friedrichs's theory of symmetric positive systems \cite{Friedrichs58} guarantees the existence of 
a unique solution $\tilde{\bfv} \in L^2(B_R)$.  
Moreover, we can obtain an explicit $L^2$ bound for $\tilde{\bfv}$, and hence for $\bfv$, as follows.   
Multiply the matrix equation \eqref{extended-linear-sys} by $\tilde{\bfv}^{\sf T}$ to obtain the scalar equation
\begin{equation}
\sum_{i=1}^n \tilde{\bfv}^{\sf T} \tilde{A}^i \, \partial_i \tilde{\bfv} + \tilde{\bfv}^{\sf T} \tilde{B}\, 
\tilde{\bfv} = \tilde{\bfv}^{\sf T} \tilde{\bfh}. 
\label{estimate-0-step-1}
\end{equation}
Then, because $\tilde{A}^i$ is symmetric and
\[ 
\sum_{i=1}^n \partial_i \left( \tilde{\bfv}^{\sf T} \tilde{A}^i \tilde{\bfv}\right) 
= \sum_{i=1}^n \left( 2 \tilde{\bfv}^{\sf T} \tilde{A}^i\, (\partial_i \tilde{\bfv}) 
+ \tilde{\bfv}^{\sf T} (\partial_i \tilde{A}^i) \tilde{\bfv} \right), 
\]
we can write equation \eqref{estimate-0-step-1} as
\begin{equation}
-\tfrac{1}{2} \sum_{i=1}^n \tilde{\bfv}^{\sf T} (\partial_i \tilde{A}^i) \tilde{\bfv} 
+ \tilde{\bfv}^{\sf T} \tilde{B}\, \tilde{\bfv} = \tilde{\bfv}^{\sf T} \tilde{\bfh} 
- \tfrac{1}{2} \sum_{i=1}^n \partial_i \left( \tilde{\bfv}^{\sf T} \tilde{A}^i \tilde{\bfv}\right).
\end{equation}
Multiply by $2$ and use the fact that 
$\tilde{\bfv}^{\sf T} \tilde{B}\, \tilde{\bfv} = \tfrac{1}{2} \tilde{\bfv}^{\sf T} (\tilde{B} + \tilde{B}^{\sf T})\, \tilde{\bfv}$ to obtain
\begin{equation}
\tilde{\bfv}^{\sf T} \Big{(}\tilde{B} + \tilde{B}^{\sf T} - \sum_{i=1}^n \partial_i \tilde{A}^i\Big{)} \tilde{\bfv} 
= 2 \tilde{\bfv}^{\sf T} \tilde{\bfh} - \sum_{i=1}^n \partial_i \left( \tilde{\bfv}^{\sf T} \tilde{A}^i \tilde{\bfv}\right).  
\label{estimate-0-step-2}
\end{equation}
By Proposition \ref{extended-ssp-prop}, it follows that
\begin{align*}
 \tfrac{1}{2}\lambda_0 |\tilde{\bfv}|^2 & \leq 2 \tilde{\bfv}^{\sf T} \tilde{\bfh} 
 - \sum_{i=1}^n \partial_i \left( \tilde{\bfv}^{\sf T} \tilde{A}^i \tilde{\bfv}\right) \\
 & \le \frac{\lambda_0}{4}|\tilde{\bfv}|^2 +\frac{4}{\lambda_0}|\tilde{\bfh}|^2 
 -\sum_{i=1}^n\partial_i\left(\tilde{\mathbf{v}}^{\sf T}\tilde{A}^i\tilde{\mathbf{v}}\right).
\end{align*}
Integrate over $B_R$, apply Stokes' theorem, and use the fact that $\beta$ is positive definite on $\partial B_R$ to obtain
\begin{equation}
\|\tilde{\mathbf{v}}\|_0^2
\le C_0(\lambda_0)^2\|\tilde{\mathbf{h}}\|^2_0
-\frac{4}{\lambda_0}\int_{\partial B_R} \tilde{\mathbf{v}}^{\sf T} \beta\tilde{\mathbf{v}}\,dS
\le  C_0(\lambda_0)^2\|\tilde{\mathbf{h}}\|^2_0,
\label{v-tilde-estimate-0}
\end{equation}
where $C(\lambda_0)>0$ is a universal constant depending on $\lambda_0$.
Therefore, the restriction $\bfv$ of $\tilde{\bfv}$ to $B_r$ satisfies
\begin{equation}\label{v-order-0}
\|\bfv\|_0 \leq \|\tilde{\bfv}\|_0 \leq C_0(\lambda_0)  \|\tilde{\bfh}\|_0 \leq C_0(\lambda_0) M_{0,2}  \|\bfh\|_0,
\end{equation}
where $M_{0,2}$ is as in Lemma \ref{dilation-lemma}.

The bounds on the derivatives of $\bfv$ may be computed similarly by differentiation, 
and then the existence of these derivatives follows from standard results in analysis. First, differentiate the system \eqref{extended-linear-sys} with respect to $x^j$ to obtain
\begin{equation}
\sum_{i=1}^n \left(\tilde{A}^i\, \partial^2_{ij} \tilde{\bfv} 
+ (\partial_j \tilde{A}^i) \partial_i \tilde{\bfv} \right) + \tilde{B} \, \partial_j \tilde{\bfv} 
+ (\partial_j \tilde{B}) \tilde{\bfv} = \partial_j \tilde{\mathbf{h}}.  
\label{estimate-1-step-1}
\end{equation}
Multiply the matrix equation \eqref{estimate-1-step-1} by $\partial_j \tilde{\bfv}^{\sf T}$ to obtain
\begin{equation}
\sum_{i=1}^n \left( (\partial_j \tilde{\bfv}^{\sf T}) \tilde{A}^i\, \partial^2_{ij} \tilde{\bfv} 
+ (\partial_j \tilde{\bfv}^{\sf T})(\partial_j \tilde{A}^i) \partial_i \tilde{\bfv} \right) 
+ (\partial_j \tilde{\bfv}^{\sf T}) \tilde{B} \, \partial_j \tilde{\bfv} 
+ (\partial_j \tilde{\bfv}^{\sf T}) (\partial_j \tilde{B}) \tilde{\bfv} 
= (\partial_j \tilde{\bfv}^{\sf T}) \partial_j \tilde{\mathbf{h}}.  
\label{estimate-1-step-2}
\end{equation}
By an argument similar to that above, we can write equation \eqref{estimate-1-step-2} as
\begin{multline}
\qquad \partial_j \tilde{\bfv}^{\sf T} \Big{(}\tilde{B} + \tilde{B}^{\sf T} 
- \sum_{i=1}^n \partial_i \tilde{A}^i \Big{)} \partial_j \tilde{\bfv} 
+ 2\sum_{i=1}^n (\partial_j \tilde{\bfv}^{\sf T}) (\partial_j \tilde{A}^i) \partial_i \tilde{\bfv}  \\
= 2 \partial_j \tilde{\bfv}^{\sf T} \left(\partial_j \tilde{\mathbf{h}} - (\partial_j \tilde{B}) \tilde{\bfv}\right) 
- \sum_{i=1}^n \partial_i\left( \partial_j \tilde{\bfv}^{\sf T} \tilde{A}^i \partial_j \tilde{\bfv}\right). \qquad \label{estimate-1-step-3}
\end{multline}
Now sum equation \eqref{estimate-1-step-3} from $j=1$ to $n$, and note that the second term can be written as
\[ 
2 \sum_{i,j=1}^n (\partial_j \tilde{\bfv}^{\sf T}) (\partial_j \tilde{A}^i) \partial_i \tilde{\bfv} 
= \sum_{i,j=1}^n (\partial_j \tilde{\bfv}^{\sf T}) (\partial_j \tilde{A}^i + \partial_i \tilde{A}^j) (\partial_i \tilde{\bfv}). 
\]
Thus the summed equation can be written as
\begin{multline}
 \sum_{j=1}^n \partial_j \tilde{\bfv}^{\sf T} \Big{(}\tilde{B} + \tilde{B}^{\sf T} 
 - \sum_{i=1}^n \partial_i \tilde{A}^i \Big{)} \partial_j \tilde{\bfv} 
 + \sum_{i,j=1}^n (\partial_j \tilde{\bfv}^{\sf T}) (\partial_j \tilde{A}^i + \partial_i \tilde{A}^j) (\partial_i \tilde{\bfv})  \\
= \sum_{j=1}^n \left(2 \partial_j \tilde{\bfv}^{\sf T} \left(\partial_j \tilde{\mathbf{h}} 
- (\partial_j \tilde{B}) \tilde{\bfv}\right) - \sum_{i=1}^n \partial_i\left( \partial_j \tilde{\bfv}^{\sf T} \tilde{A}^i \partial_j \tilde{\bfv} \right)\right), 
\qquad \label{estimate-1-step-4a}
\end{multline}
or, in other words,
\begin{equation}
\sum_{j=1}^n  \tilde{Q}_0 (\partial_j \tilde{\bfv}) + \tilde{Q}_1( \partial_1 \tilde{\bfv}, \ldots, \partial_n \tilde{\bfv})  \\
 = \sum_{j=1}^n \left(2 \partial_j \tilde{\bfv}^{\sf T} \left(\partial_j \tilde{\mathbf{h}} 
 - (\partial_j \tilde{B}) \tilde{\bfv}\right) 
 - \sum_{i=1}^n \partial_i\left( \partial_j \tilde{\bfv}^{\sf T} \tilde{A}^i \partial_j \tilde{\bfv} \right)\right) .
\label{estimate-1-step-4}
\end{equation}
By Proposition \ref{extended-ssp-prop}, it follows that
\begin{align*}
\tfrac{1}{2} (\lambda_0 + \lambda_1) \sum_{j=1}^n |\partial_j \tilde{\bfv}|^2  
& \leq \sum_{j=1}^n \left(2 \partial_j \tilde{\bfv}^{\sf T} \left(\partial_j \tilde{\mathbf{h}} - (\partial_j \tilde{B}) \tilde{\bfv}\right) 
- \sum_{i=1}^n \partial_i\left( \partial_j \tilde{\bfv}^{\sf T} \tilde{A}^i \partial_j \tilde{\bfv} \right)\right) \\
& \le \tfrac{1}{4} (\lambda_0+\lambda_1) \sum_{j=1}^n |\partial_j \tilde{\bfv}|^2
+\frac{4}{(\lambda_0+\lambda_1)}\sum_{j=1}^n
\left( |\partial_j \tilde{\bfh}|^2
+|\partial_j \tilde{B}|_{0,\infty}^2|\tilde{\mathbf{v}}|^2\right) \\
& \qquad
- \sum_{i=1}^n\partial_i\left(\partial_j\tilde{\mathbf{v}}^\top\tilde{A}^i\partial_j\tilde{\mathbf{v}}\right).
\end{align*}
Integrate over $B_R$, apply Stokes' theorem, and  use the fact that $\beta$ is positive definite on $\partial B_R$ again to obtain
\begin{equation}\label{estimate-1-some-step}
\begin{aligned}
\|\tilde{\mathbf{v}}\|_1^2
&\le  C_1(\lambda_0, \lambda_1)^2 \left(\|\tilde{\mathbf{h}}\|^2_1
+\|\tilde{\mathbf{v}}\|^2_0\|\tilde{B}\|^2_{1,\infty}\right)
-\frac{4}{(\lambda_0+\lambda_1)}\sum_{j=1}^n\int_{\partial B_R} (\partial_j\tilde{\mathbf{v}}^{\sf T}\beta\partial_j \tilde{\mathbf{v}})\, dS\\
&\le   C_1(\lambda_0, \lambda_1)^2\left(\|\tilde{\mathbf{h}}\|^2_1
+\|\tilde{\mathbf{v}}\|^2_0\|\tilde{B}\|^2_{1,\infty}\right) ,
\end{aligned}
\end{equation}
where $C_1(\lambda_0,\lambda_1)>0$ is a universal constant depending on $\lambda_0$ and $\lambda_1$.

By the Sobolev embedding estimate \eqref{Sobolev-embedding-estimate}, we have
\[ \|\tilde{B}\|_{1,\infty} \leq K \|\tilde{B}\|_{2+[\frac{n}{2}]} \]
for some constant $K$; thus we can write the inequality \eqref{estimate-1-some-step} as
\[ \|\tilde{\mathbf{v}}\|_1^2 \leq C_1^2\left(\|\tilde{\mathbf{h}}\|^2_1
+\|\tilde{\mathbf{v}}\|^2_0\|\tilde{B}\|^2_{2+[\frac{n}{2}]}\right), \]
and hence
\[ \|\tilde{\mathbf{v}}\|_1 \leq C_1\left(\|\tilde{\mathbf{h}}\|_1
+\|\tilde{\mathbf{v}}\|_0\|\tilde{B}\|_{2+[\frac{n}{2}]}\right). \]
Therefore, the restriction $\bfv$ of $\tilde{\bfv}$ to $B_r$ satisfies
\begin{equation}\label{v-order-1}
\begin{aligned}
\|\bfv\|_1 \leq \|\tilde{\bfv}\|_1 & \leq C_1 \left( \|\tilde{\mathbf{h}}\|_1
+\|\tilde{\mathbf{v}}\|_0\|\tilde{B}\|_{2+[\frac{n}{2}]}\right) \\
& \leq C_1 M_{1,2} \left( \|\mathbf{h}\|_1
+\|\mathbf{v}\|_0\|B\|_{2+[\frac{n}{2}]}\right) \\
& \leq C'_1 \left( \|\mathbf{h}\|_1
+\|\mathbf{h}\|_0\|B\|_{2+[\frac{n}{2}]}\right) \\
& \leq C''_1 \left( \|\mathbf{h}\|_1
+\|\mathbf{h}\|_0\|\bfu\|_{3+[\frac{n}{2}]}\right),
\end{aligned}
\end{equation}
where the last inequality follows from the fact that $B$ is a $C^\infty$ function of $\bfu$ and its first derivatives.

Successive differentiations of the system \eqref{extended-linear-sys} produce similar results.  
To obtain an estimate for $\|\bfv\|_k$, differentiate the system \eqref{extended-linear-sys} $k$ times, 
with respect to $x^{j_1}, \ldots, x^{j_k}$.  This yields an equation of the form
\begin{multline}
\sum_{i=1}^n \left( \tilde{A}^i\, \partial^{k+1}_{ij_1\ldots j_k} \tilde{\bfv} 
+ \sum_{q=1}^k (\partial_{j_q} \tilde{A}^i) \partial^k_{i j_1 \ldots \hat{j}_q \ldots j_k} \tilde{\bfv} \right) 
+ \tilde{B}\, \partial^k_{j_1\ldots j_k} \tilde{\bfv}  \\
= \partial^k_{j_1\ldots j_k} \tilde{\mathbf{h}} - ( \partial^k_{j_1\ldots j_k} \tilde{B}) \tilde{\bfv} 
- \sum_{m=1}^{k-1} \left(\sum_{i=1}^n D^{k+1-m}  \tilde{A}^i + D^{k-m} \tilde{B}  \right)  ( D^{m}  \tilde{\bfv} ),
  \label{estimate-k-step-1}
\end{multline}
where, on the right-hand side, $D^{m}$ indicates an appropriate differential operator of order $m$.  
Multiply the matrix equation \eqref{estimate-k-step-1} by $2 \partial^k_{j_1\ldots j_k} \tilde{\bfv}^{\sf T}$, 
rewrite the first term and rearrange as in the previous cases, so that the left-hand side of equation \eqref{estimate-k-step-1} becomes
\begin{equation}
\partial^k_{j_1\ldots j_k} \tilde{\bfv}^{\sf T} (\tilde{B} + \tilde{B}^{\sf T} - \sum_{i=1}^n \partial_i \tilde{A}^i) \partial^k_{j_1\ldots j_k} \tilde{\bfv} +
2 (\partial^k_{j_1\ldots j_k} \tilde{\bfv}^{\sf T}) \sum_{i=1}^n \sum_{q=1}^k (\partial_{j_q} \tilde{A}^i) \partial^k_{i j_1 \ldots \hat{j}_q \ldots j_k} \tilde{\bfv}.
\label{estimate-k-step-3}
\end{equation}
Now sum over $j_1, \ldots, j_k$, and note that the second term in \eqref{estimate-k-step-3} can be rearranged as follows 
by using the commutativity of mixed partial derivatives and relabeling as appropriate:
\begin{align*}
& \ 2 \sum_{i, j_1, \ldots, j_k = 1}^n \sum_{q=1}^k  (\partial^k_{j_1\ldots j_k} 
\tilde{\bfv}^{\sf T})(\partial_{j_q} \tilde{A}^i) \partial^k_{i j_1 \ldots \hat{j}_q \ldots j_k} \tilde{\bfv} \\
= & \
2 \sum_{i, j_1, \ldots, j_k = 1}^n \sum_{q=1}^k (\partial^k_{j_q j_1 \ldots \hat{j}_q \ldots j_k } 
 \tilde{\bfv}^{\sf T})(\partial_{j_q} \tilde{A}^i) \partial^k_{i j_1 \ldots \hat{j}_q \ldots j_k} \tilde{\bfv} \\
= & \sum_{i, j_1, \ldots, j_k = 1}^n \sum_{q=1}^k (\partial^k_{j_q j_1 \ldots \hat{j}_q \ldots j_k } 
 \tilde{\bfv}^{\sf T})(\partial_{j_q} \tilde{A}^i + \partial_i \tilde{A}^{j_q}) \partial^k_{i j_1 \ldots \hat{j}_q \ldots j_k} \tilde{\bfv} \\
= & \ k \sum_{i, j_1, \ldots, j_{k} = 1}^n (\partial^k_{j_1 \ldots j_{k} } \tilde{\bfv}^{\sf T}) (\partial_{j_k} \tilde{A}^i 
  + \partial_i \tilde{A}^{j_k}) \partial^k_{i j_1 \ldots j_{k-1} } \tilde{\bfv} .
\end{align*}
Thus the summed equation can be written as
\begin{multline}
\sum_{j_1, \ldots, j_k=1}^n \tilde{Q}_0(\partial^k_{j_1, \ldots, j_k} \tilde{\bfv}) 
+ k \sum_{j_1, \ldots, j_{k-1}=1}^n \tilde{Q}_1(\partial^k_{j_1, \ldots, j_{k-1},1} \tilde{\bfv}, \ldots, \partial^k_{j_1, \ldots, j_{k-1},n}\tilde{\bfv})  \\
= \sum_{j_1, \ldots, j_k=1}^n \Bigg{(} 2  \partial^k_{j_1\ldots j_k} \tilde{\bfv}^{\sf T} 
  \Big(\partial^k_{j_1\ldots j_k} \tilde{\mathbf{h}} - ( \partial^k_{j_1\ldots j_k} \tilde{B}) \tilde{\bfv} \Big)  \qquad \qquad \qquad \qquad \qquad \\
\qquad \qquad  - \sum_{m=1}^{k-1}  \partial^k_{j_1\ldots j_k} \tilde{\bfv}^{\sf T} \Big(\sum_{i=1}^n D^{k+1-m}  \tilde{A}^i + D^{k-m} \tilde{B}  \Big)   D^{m}  \tilde{\bfv}  \\
 - \sum_{i=1}^n \partial_i\left( \partial^k_{j_1, \ldots, j_{k}} \tilde{\bfv}^{\sf T} \tilde{A}^i \partial^k_{j_1, \ldots, j_{k}} 
    \tilde{\bfv}\right) \Bigg{)}. \qquad \qquad \qquad\qquad\qquad\qquad\,\,\,\quad
 \label{estimate-k-step-4}
\end{multline}
By Proposition \ref{extended-ssp-prop}, the left-hand side of equation \eqref{estimate-k-step-4} is bounded below by
\[ \tfrac{1}{2} (\lambda_0 + k \lambda_1) \sum_{j_1, \ldots, j_k=1}^n | \partial^k_{j_1, \ldots, j_k} \tilde{\bfv}|^2. \]
Thus, after performing operations similar to those above, we obtain
\begin{equation}\label{estimate-k-step-4a}
\begin{aligned}
\|\tilde{\mathbf{v}}\|_k^2 & \le C_k(\lambda_0, \lambda_1)^2 \left(\|\tilde{\mathbf{h}}\|_k^2+
\|\tilde{\bfv}\|_0^2 \|\tilde{B}\|^2_{k,\infty} +
\sum_{m=1}^{k-1} \| \tilde{\bfv} \|_m^2 \Big( \sum_{j=1}^n \|\tilde{A}^j\|^2_{k+1-m,\infty} +\|\tilde{B}\|^2_{k-m,\infty} \Big) \right) \\
& \le C_k(\lambda_0, \lambda_1)^2 \left(\|\tilde{\mathbf{h}}\|^2_k+
\sum_{m=0}^{k-1} \| \tilde{\bfv} \|^2_m \Big( \sum_{j=1}^n \|\tilde{A}^j\|^2_{k+1-m,\infty} +\|\tilde{B}\|^2_{k-m,\infty} \Big) \right).
\end{aligned}
\end{equation}
By the Sobolev embedding estimate \eqref{Sobolev-embedding-estimate}, we have
\[ \|\tilde{A}^j\|_{k+1-m,\infty} \leq K_{k} \|\tilde{A}^j\|_{k+2-m+[\frac{n}{2}]}, \qquad
\|\tilde{B}\|_{k-m,\infty} \leq K_{k} \|\tilde{B}\|_{k+1-m+[\frac{n}{2}]} \]
for some constant $K_{k}$; thus we can write the inequality \eqref{estimate-k-step-4a} as
\begin{equation*}
\|\tilde{\mathbf{v}}\|^2_k
\le C_k^2 \left(\|\tilde{\mathbf{h}}\|^2_k+
\sum_{m=0}^{k-1} \| \tilde{\bfv} \|^2_m \Big( \sum_{j=1}^n \|\tilde{A}^j\|^2_{k+2-m+[\frac{n}{2}]} +\|\tilde{B}\|^2_{k+1-m+[\frac{n}{2}]} \Big) \right),
\end{equation*}
and hence
\begin{equation}
\|\tilde{\mathbf{v}}\|_k
\le C_k \left(\|\tilde{\mathbf{h}}\|_k+
\sum_{m=0}^{k-1} \| \tilde{\bfv} \|_m \Big( \sum_{j=1}^n \|\tilde{A}^j\|_{k+2-m+[\frac{n}{2}]} +\|\tilde{B}\|_{k+1-m+[\frac{n}{2}]} \Big) \right).
\label{estimate-k-step-5}
\end{equation}

By the Gagliardo-Nirenberg interpolation inequality \cite{Evans10} and the Cauchy-Schwarz inequality, for $0 \leq m \leq k-1$, we have
\begin{align*}
\|\tilde{\mathbf{v}}\|_{m} \|\tilde{A}^j\|_{k+2-m+[\frac{n}{2}]}
& \le \tilde{C}_{m} (\|\tilde{\mathbf{v}}\|_{0}\|\tilde{A}^j\|_{k+2+[\frac{n}{2}]} +  \|\tilde{\mathbf{v}}\|_{k-1}\|\tilde{A}^j\|_{3+[\frac{n}{2}]})  , \\[0.1in]
\|\tilde{\mathbf{v}}\|_{m} \|\tilde{B}\|_{k+1-m+[\frac{n}{2}]}
& \le \tilde{C}_{m} (\|\tilde{\mathbf{v}}\|_{0}\|\tilde{B}\|_{k+1+[\frac{n}{2}]} +  \|\tilde{\mathbf{v}}\|_{k-1}\|\tilde{B}\|_{2+[\frac{n}{2}]})
\end{align*}
for some constant $\tilde{C}_{m}$.
Substituting into equation \eqref{estimate-k-step-5}, we obtain
\begin{multline}\label{estimate-k-step-7}
\qquad \|\tilde{\mathbf{v}}\|_k
\le
C'_k\left(\|\tilde{\mathbf{h}}\|_{k}  + \|\tilde{\bfv}\|_0\Big( \sum_{j=1}^n \|\tilde{A}^j\|_{k+2+[\frac{n}{2}]} + \|\tilde{B}\|_{k+1+[\frac{n}{2}]} \Big) \right. \\
\left. + \|\tilde{\bfv}\|_{k-1} \Big( \sum_{j=1}^n \|\tilde{A}^j\|_{3+[\frac{n}{2}]} + \|\tilde{B}\|_{2+[\frac{n}{2}]} \Big) \right)
. \qquad \qquad \qquad\qquad\qquad\,\,\quad
\end{multline}

Now let $\alpha \geq 4 + [\frac{n}{2}]$.  It follows from the fact that $A$ and $B$ are $C^\infty$ functions of $\bfu$ 
and its first derivatives that there exist constants $\tilde{K}_{\rho}$ and $\tilde{K}_{k,\rho}$ such that, 
for any $\bfu \in D_\alpha$, the extended linear system \eqref{extended-linear-sys} 
corresponding to the linearization of \eqref{nonlinear-sys} at $\bfu$ satisfies
\[ 
\|\tilde{A}^j\|_{3+[\frac{n}{2}]}, \|\tilde{B}\|_{2+[\frac{n}{2}]} \leq \tilde{K}_{\rho}, \qquad
 \|\tilde{A}^j\|_{k+2+[\frac{n}{2}]}, \|\tilde{B}\|_{k+1+[\frac{n}{2}]} \leq \tilde{K}_{k,\rho}(1 +  \|\bfu\|_{k+3+[\frac{n}{2}]}).
  \]
Thus \eqref{estimate-k-step-7} becomes
\begin{equation}\label{estimate-k-step-8}
\|\tilde{\mathbf{v}}\|_k \leq C''_k \left( \|\tilde{\mathbf{h}}\|_{k}  + \|\tilde{\bfv}\|_0\|\bfu\|_{k+3+[\frac{n}{2}]} + \|\tilde{\bfv}\|_{k-1} \right).
\end{equation}
It then follows by induction (with the inequality \eqref{v-order-1} as the base case) that
\begin{equation}\label{estimate-k-step-9}
\|\tilde{\mathbf{v}}\|_k \leq C'''_k \left( \|\tilde{\mathbf{h}}\|_{k}  + \|\tilde{\bfv}\|_0\|\bfu\|_{k+3+[\frac{n}{2}]}  \right).
\end{equation}
Therefore, the restriction $\bfv$ of $\tilde{\bfv}$ to $B_r$ satisfies
\begin{equation}\label{v-order-k}
\begin{aligned}
\|\bfv\|_k \leq \|\tilde{\bfv}\|_k & \leq C'''_k \left( \|\tilde{\mathbf{h}}\|_{k}  + \|\tilde{\bfv}\|_0\|\bfu\|_{k+3+[\frac{n}{2}]}  \right) \\
& \leq C'''_k M_{k,2} \left( \|\mathbf{h}\|_{k}  + \|\bfv\|_0\|\bfu\|_{k+3+[\frac{n}{2}]}  \right) \\
& \leq \tilde{C}_k \left( \|\mathbf{h}\|_{k}  + \|\bfh\|_0\|\bfu\|_{k+3+[\frac{n}{2}]}  \right) .
\end{aligned}
\end{equation}

All the hypotheses of Theorem \ref{Nash-Moser-theorem} have now been verified for any $\alpha \geq 3 + [\frac{n}{2}]$; 
thus the conclusion of Theorem \ref{Nash-Moser-theorem} gives the desired solution $\bfu \in C^\infty(B_r, \R^s)$ 
to the nonlinear system \eqref{nonlinear-sys} on $B_r$.  This completes the proof of Theorem \ref{nonlinear-theorem}.

\part{Application to Isometric Embedding}

\section{Local existence theorems for isometric embedding}\label{state-main-theorems-sec}

The remainder of this paper will be devoted to giving a new proof, based on Theorem \ref{nonlinear-theorem}, for the following local existence theorem:

\begin{ourtheorem}\label{main-theorem-3D}
Let $(M,g)$ be a $C^\infty$ Riemannian manifold of dimension $n=2$ or $n=3$, let $N = \tfrac{1}{2}n(n+1)$, 
let $\bfx_0 \in M$ so that the Riemann curvature tensor $R(\bfx_0)$ is nonzero.
Then there exists a neighborhood $\Omega \subset M$ of $\bfx_0$ 
for which there is a $C^\infty$ isometric embedding $\bfy:\Omega \to \R^N$.
\end{ourtheorem}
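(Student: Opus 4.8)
The plan is to cast the isometric embedding equations \eqref{equation:isometric} as a determined nonlinear first-order system and apply Theorem~\ref{nonlinear-theorem}. Working in local coordinates centered at $\bfx_0$, consider the operator $\Phi(\bfy)=\bigl(\partial_i\bfy\cdot\partial_j\bfy\bigr)_{1\le i\le j\le n}$, regarded as a $C^\infty$ first-order operator on $\R^N$-valued maps after identifying $\R^N$ with the space of symmetric $n\times n$ matrices (so the number of equations equals the number of unknowns, $N=\tfrac12 n(n+1)$, when $n=2,3$); then \eqref{equation:isometric} reads $\Phi(\bfy)=g$. The argument reduces to: (1) producing a map $\bfy_0$ near $\bfx_0$ that is an immersion at $\bfx_0$ and for which $\Phi(\bfy_0)-g$ vanishes to high order at $\bfx_0$; (2) finding a ``change of variables'' --- a coordinate change on $M$ together with a fiberwise linear change of the perturbation field, after a suitable prolongation --- turning the linearization at maps near $\bfy_0$ into a symmetric system; (3) checking that, for the chosen $\bfy_0$ and change of variables, this linearization is strongly symmetric positive at $\bfx_0$; and (4) noting that, after shrinking the neighborhood, $\|\Phi(\bfy_0)-g\|_\beta$ becomes as small as desired, so that Theorem~\ref{nonlinear-theorem} produces a $C^\infty$ solution $\bfy$ on a neighborhood $\Omega_0$ of $\bfx_0$; being $C^1$-close to the immersion $\bfy_0$, this $\bfy$ is an embedding after one more shrinking.

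For (1) and (4), I would take $\bfy_0$ to be a polynomial map solving $\Phi(\bfy_0)=g$ formally to order $\beta+1$ at $\bfx_0$, built by the Cartan--Janet scheme. The hypothesis $R(\bfx_0)\neq0$ is precisely what lets the leading term of the second fundamental form of $\bfy_0$, an $\R^{N-n}$-valued symmetric bilinear form $\bar H$, be chosen algebraically so that the Gauss equation $R_{ijkl}=\bar H_{ik}\!\cdot\!\bar H_{jl}-\bar H_{il}\!\cdot\!\bar H_{jk}$ holds with $\bar H$ nondegenerate (here $N-n=\binom{n}{2}$, so the dimension count works out in these low dimensions); that nondegeneracy then makes the recursion for the higher Taylor coefficients solvable. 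Truncating at high enough order forces $\Phi(\bfy_0)-g$ to vanish to order $\ge\beta+1$, hence to be $O(r^{\beta+1-k})$ in $C^k$ on the ball of radius $r$, giving (4).

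The heart of the matter is (2)--(3). The linearization of $\Phi$ at a map $\bfz$ is $\Phi'(\bfz)\bfv|_{ij}=\partial_i\bfv\cdot\partial_j\bfz+\partial_j\bfv\cdot\partial_i\bfz$; in this raw form the system is \emph{not} symmetrizable --- expressing $\bfv$ in the moving frame of $\bfz$ yields coefficient matrices $A^i$ whose ``normal'' columns vanish, so the Legendre quadratic form $Q_1$ of \eqref{define-Q1} is automatically degenerate. Hence the change of variables must be more than a cosmetic relabeling: one must force the normal components of the perturbation to enter at leading order, which one does by bringing in the Codazzi equations --- differentiating and forming suitable combinations, and using the linearized Gauss equation to eliminate the components of the perturbed second fundamental form that are algebraically determined by $\bar H$. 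This produces a bona fide determined first-order system whose coefficient matrices are governed by $\bar H$; one then chooses the coordinates so that the relevant ``singular point'' of the system sits at $\bfx_0$, making the $A^i$ vanish there, and --- exactly as in the one-dimensional Example~\ref{1d-example} --- it is their first derivatives, which encode $\bar H$ and the curvature, that must make both $Q_0$ of \eqref{define-Q0} and $Q_1$ of \eqref{define-Q1} positive definite at $\bfx_0$.

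The step I expect to be the main obstacle is (3): verifying strong symmetric positivity, and in particular the Legendre condition $Q_1(\bfx_0)>0$ together with $Q_0(\bfx_0)>0$, for the reformulated system. This is a finite-dimensional, essentially linear-algebraic statement about $\bar H$ and its derivatives, but it requires choosing the prolongation, the coordinates, and the higher-order Taylor data of $\bfy_0$ simultaneously and optimally, and it is only here that the hypotheses $R(\bfx_0)\neq0$ and $n\le 3$ are genuinely used: in dimensions $2$ and $3$ the Gauss equations leave enough freedom to normalize $\bar H$ (and to adjust the higher-order part of $\bfy_0$ so as to control the zeroth-order coefficient matrix $B$) so that these two forms come out definite. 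Once (1)--(4) hold, Theorem~\ref{nonlinear-theorem} --- via the Nash--Moser implicit function theorem underlying it --- yields the desired $C^\infty$ isometric embedding, proving Theorem~\ref{main-theorem-3D}.
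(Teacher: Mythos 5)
Your overall strategy---an approximate embedding built from Cartan--Janet data with $R(\bfx_0)\neq 0$ feeding the Gauss equations, followed by a change of variables to make the linearization strongly symmetric positive so that Theorem \ref{nonlinear-theorem} applies---is the same skeleton as the paper's, but the two steps that carry the actual weight are either left unproved or sketched in a direction that does not work. First, your reformulation step (2) is not what is needed. The paper does not prolong the system or force the normal components of $\bfv$ to enter at leading order; it does the opposite: it pairs the linearized equations \eqref{equation:linearized} with a basis $A^1,\ldots,A^n$ of the annihilator $\II_{\bfx}^\perp$ of the second fundamental form, obtaining an $n\times n$ tangential subsystem \eqref{equation:unsymmetrized-system} for $\bar v_i=\bfv\cdot\partial_i\bfy_0$ alone, and then recovers the normal components \emph{algebraically} (Proposition \ref{reduced-system-prop}), with the estimates for the full $\bfv$ deduced afterwards (Proposition \ref{key-ssp-prop}). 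Symmetrizability of that subsystem is exactly where $n\le 3$ enters: one needs an annihilator basis given by totally symmetric cubics (Proposition \ref{symmetrization-prop}), and this fails generically for $n\ge 4$. Moreover, your assertion that one should choose coordinates ``making the $A^i$ vanish'' at $\bfx_0$ is a misreading of Example \ref{1d-example}: strong symmetric positivity (Definition \ref{ssp-def}) never requires $A^i(\bfx_0)=0$---the conditions involve only $B+B^{\sf T}-\sum_i\partial_iA^i$ and $\partial_iA^j+\partial_jA^i$---and in the reduction above the $A^i(\bfx_0)$ \emph{cannot} vanish, since they must span the $n$-dimensional annihilator; in the paper they are put into the explicit nonzero normal forms \eqref{A-normal-form-2D}, \eqref{A-normal-form-3D}. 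The $P$-convexity issue is handled in Part 1 by extending the system to a large ball, not by placing a singular point at $\bfx_0$.

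Second, the step you yourself flag as ``the main obstacle''---verifying $Q_0(\bfx_0)>0$ and $Q_1(\bfx_0)>0$ compatibly with the curvature data---is precisely the content of the paper's Part 2, and your proposal offers no argument for it beyond the expectation that dimensions $2$ and $3$ leave ``enough freedom.'' The difficulty is sharper than a dimension count: in normal coordinates $B(\mathbf{0})=0$, so naive symmetric positivity forces $-\sum_i a^i_i>0$ while the diagonal blocks of $Q_1$ force $2a^i_i>0$, which are mutually exclusive; the resolution is the specific change of variables \eqref{change-of-variables-form}, whose effect on $Q_0$ and $Q_1$ is computed in Lemma \ref{Q0-Q1-transformation-lemma}, and one must then show that the required $a^{kij}_\ell$ and $h^\alpha_{ijk}$ can be chosen consistently with the annihilator and Codazzi constraints \eqref{d-annihilator-eqns}, \eqref{Codazzi-eqns} and so that \emph{every} admissible value of $\partial_m R_{ijk\ell}(\bfx_0)$ is realized through the differentiated Gauss equations \eqref{d-Gauss-eqns}---for $n=3$ this is the surjectivity (rank $15$) computation in the proof of Theorem \ref{main-ssp-theorem}, which also pins down where $\hat R\neq 0$ is used (to reach the Hesse normal form \eqref{H-normal-form-3D} with $0<|\sigma|<\tfrac12$). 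Without this linear-algebraic construction, and with the reformulation step pointed the wrong way, the proposal is a plausible outline but not a proof.
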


Here we briefly describe our strategy for proving Theorem \ref{main-theorem-3D}.  
Let $n=2$ or $n=3$, and let $N = \tfrac{1}{2}n(n+1)$.  
For convenience, choose local coordinates $\bfx = (x^1, \ldots, x^n)$ based at $\bfx_0$, 
so that without loss of generality we may assume that $\bfx_0 = \mathbf{0}$.  
Given a $C^\infty$ metric $g$ on a neighborhood $\Omega$ of $\bfx = \mathbf{0}$, 
choose a real analytic metric $\bar{g}$ on $\Omega$ that agrees with $g$ to sufficiently high order at $\bfx = \mathbf{0}$.  
By the Cartan-Janet theorem, there exists a real analytic isometric embedding (possibly on a smaller neighborhood) 
$\bfy_0: \Omega \to \R^N$ of $(\Omega, \bar{g})$ into $\R^N$.

The linearization of the isometric embedding system \eqref{equation:isometric} at $\bfy_0$ is a first-order PDE system 
of $N$ equations for the unknown function $\bfv:\Omega \to \R^N$.  
This system decomposes into a system of $n$ first-order PDEs for the tangential components of $\bfv$, 
together with $(N-n)$ equations that determine the normal components of $\bfv$ algebraically in terms of the tangential components.

We will show that, under the hypotheses of Theorem \ref{main-theorem-3D}, 
the embedding $\bfy_0$ can be chosen so that the tangential subsystem becomes strongly symmetric positive after a fairly simple, 
but carefully chosen, change of variables.  Consequently, it follows from the argument given in the proof 
of Theorem \ref{nonlinear-theorem} that the tangential components of $\bfv$ satisfy the smooth tame estimates required to implement 
a Nash-Moser iteration scheme for the isometric embedding system \eqref{equation:isometric}, 
and then the remaining algebraic equations will imply the necessary estimates for the normal components of $\bfv$.  
Theorem \ref{main-theorem-3D} then follows directly from the Nash-Moser implicit function theorem (cf. Theorem \ref{Nash-Moser-theorem}).

\begin{notation}
We will use the Einstein summation convention for the remainder of this paper.
\end{notation}

\section{The linearized isometric embedding system and Nash-Moser iteration}\label{linearization-sec}

Let $\Omega \subset \R^n$ be a neighborhood of $\bfx = \mathbf{0}$.  Let $\bfy_0:\Omega \to \R^N$ be a smooth embedding, 
and let $\bar{g} = \bar{g}_{ij} dx^i dx^j$ be the metric on $\Omega$ induced by the restriction of the Euclidean metric on $\R^N$ to $\bfy_0(\Omega)$.
Linearization of the isometric embedding system \eqref{equation:isometric} at the function $\bfy_0$ yields the linear PDE system
\begin{equation}\label{equation:linearized}
\partial_i\bfy_0\cdot\partial_j\bfv + \partial_j\bfy_0\cdot\partial_i\bfv = h_{ij}, \qquad 1 \leq i,j \leq n,
\end{equation}
for the function $\bfv:\Omega \to \R^N$, where $h_{ij} = g_{ij} - \bar{g}_{ij}$.

As described in \cite{BGY83},
the linearized system \eqref{equation:linearized} can be reformulated as a system of $n$ linear PDEs for the $n$ tangential components of $\bfv$, 
together with a system of $(N-n)$ algebraic equations for the normal components.  To this end, note that,
since $\bfy_0$ is an embedding, for each $\bfx \in \Omega$ the tangent vectors 
$\{\partial_1\bfy_0(\bfx), \ldots, \partial_n\bfy_0(\bfx)\}$ are linearly independent 
and span an $n$-dimensional subspace $T_{\bfx} \subset \R^N$. We can therefore decompose the second derivatives of $\bfy_0$ as follows:
\begin{equation}
\partial^2_{ij}\bfy_0 = \Gamma_{ij}^k\partial_k\bfy_0 + H_{ij}, \label{second-derivatives-decomp}
\end{equation}
where, for each $1 \le i, j \le n$, the vector-valued function $H_{ij} = H_{ji}: \Omega \to \R^N$ satisfies 
$H_{ij}\cdot\partial_k\bfy_0 = 0$ for $1 \le k \le n$.  
The functions $\Gamma^k_{ij}:\Omega \to \R$ are the Christoffel symbols of the metric $\bar{g}_{ij}$, 
and the quadratic form $H_{ij} dx^i dx^j$ is the second fundamental form of the embedding $\bfy_0$.

Let $\mathcal{S}_n$ denote the $\frac{1}{2}n(n+1)$-dimensional space of quadratic forms on $\R^n$, represented by symmetric $n\times n$ matrices $[s_{ij}]$.
For each $\bfx \in \Omega$, the vectors $H_{ij}(\bfx)$ determine a linear map $H_{\bfx}: \R^N \rightarrow \mathcal{S}_n$, given by
\[ H_{\bfx}(\mathbf{v}) = [\langle H_{ij}(\bfx), \mathbf{v} \rangle ]. \]
We denote the image by $\II_{\bfx} = H_{\bfx}(\R^N)$. Since the kernel of the map $H_{\bfx}$ contains $T_{\bfx}$, we have
\begin{equation}\label{equation:dim-II}
\dim \II_{\bfx} \le \frac{1}{2}n(n+1) - n = \frac{1}{2}n(n-1).
\end{equation}

\begin{definition}
The embedding $\bfy_0:\Omega \to \R^N$ is called {\em nondegenerate} if $\dim \II_{\bfx} = \frac{1}{2}n(n-1)$ for all $\bfx \in \Omega$.
\end{definition}

Now, let $\mathcal{S}^*_n$ denote the dual space to $\mathcal{S}_n$, represented by symmetric matrices $[s^{ij}]$, 
with the pairing $ \mathcal{S}^*_n \times  \mathcal{S}_n \to \R$ defined for
$A \in \mathcal{S}^*_n, H \in \mathcal{S}_n$ by
\begin{equation}
\langle A, H\rangle = \sum_{i,j=1}^n A^{ij}H_{ij}. \label{define-pairing}
\end{equation}

\begin{definition}
The {\em annihilator} $\II_{\bfx}^\perp$ of the subspace $\II_{\bfx} \subset \mathcal{S}_n$ is the subspace of $\mathcal{S}^*_n$ defined by
\[
\II_{\bfx}^\perp = \{ A \in \mathcal{S}^*_n\ :\ \langle A, H\rangle = 0 \text{ for all }H \in \II_{\bfx}\}.
\]
\end{definition}
It follows from equation \eqref{equation:dim-II} that
$\dim \II_{\bfx}^\perp \ge n$, with equality for all $\bfx \in \Omega$ if and only if $\bfy_0$ is nondegenerate.
\begin{assumption}\label{nondegeneracy-assumption}
Henceforth, we will assume that $\bfy_0$ is nondegenerate, 
and that consequently  $\dim \II_{\bfx} = \frac{1}{2}n(n-1)$ and  $\dim \II_{\bfx}^\perp = n$ for all $\bfx \in \Omega$.
\end{assumption}

Now, the system
\eqref{equation:linearized} can be rewritten as follows:
\begin{equation}\label{equation:parts}
\partial_i(\partial_j\bfy_0\cdot \bfv) + \partial_j(\partial_i\bfy_0\cdot \bfv)
- 2\bfv\cdot\partial^2_{ij}\bfy_0 = h_{ij}, \qquad 1 \leq i,j \leq n.
\end{equation}
Define functions $\bar{v}_i$ and $\nabla_j\bar{v}_i$ by
\begin{alignat*}{2}
\bar{v}_i &= \bfv\cdot\partial_i \bfy_0, &\qquad 1 &\leq i \leq n,\\
\nabla_j \bar{v}_i &= \partial_j \bar{v}_i - \Gamma_{ij}^k \bar{v}_k, &\qquad 1 &\leq i,j \leq n;
\end{alignat*}
then the system \eqref{equation:parts} can be written as
\begin{equation}\label{equation:rewritten}
\nabla_i \bar{v}_j + \nabla_j \bar{v}_i - 2\bfv\cdot H_{ij} = h_{ij}, \qquad 1 \leq i,j \leq n.
\end{equation}

Since $\dim \II_{\bfx}^\perp = n$,
there must exist smooth maps $A^1, \ldots, A^n: \Omega \rightarrow \mathcal{S}^*_n$
such that, for each $\bfx \in \Omega$, the matrices
$A^1(\bfx), \dots, A^n(\bfx)$ comprise a basis of $\II_{\bfx}^\perp$.
By writing $A^k = [A^{kij}]$ and pairing each of these matrices with equations \eqref{equation:rewritten} as in \eqref{define-pairing}, we
obtain the following system of $n$ first order PDEs for the functions $\bar{v}_1, \ldots, \bar{v}_n$:
\begin{equation}\label{equation:reduced}
A^{kij}(\nabla_i \bar{v}_j + \nabla_j \bar{v}_i) = A^{kij}h_{ij}, \qquad k = 1, \dots, n.
\end{equation}
Because $A^k \in \mathcal{S}^*_n$, we have $A^{kij} = A^{kji}$, but the component functions $A^{kij}$
do not necessarily possess any other symmetries.

\begin{proposition}\label{reduced-system-prop}
%If $N = \tfrac{1}{2} n(n+1)$, then
Any solution $(\bar{v}_1, \ldots, \bar{v}_n):\Omega \to \R^n$ to \eqref{equation:reduced} uniquely determines a solution $\bfv:\Omega \to \R^N$
to equation \eqref{equation:linearized}{\rm ;} moreover, $\bfv$ can be determined algebraically from $(\bar{v}_1, \ldots, \bar{v}_n)$.
\end{proposition}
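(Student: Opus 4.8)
The plan is to reconstruct the full vector field $\bfv$ from $(\bar v_1,\ldots,\bar v_n)$ by prescribing, at each point $\bfx \in \Omega$, its orthogonal components in the Euclidean splitting $\R^N = T_\bfx \oplus T_\bfx^\perp$, where $T_\bfx^\perp$ is the orthogonal complement of $T_\bfx = \operatorname{span}\{\partial_1\bfy_0(\bfx),\ldots,\partial_n\bfy_0(\bfx)\}$. The nondegeneracy Assumption~\ref{nondegeneracy-assumption} is exactly what makes the second fundamental form rich enough to pin down the $T_\bfx^\perp$-component.

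\emph{Tangential component.} First set $\bfv^{\mathrm T}(\bfx) = \bar g^{jk}(\bfx)\,\bar v_k(\bfx)\,\partial_j\bfy_0(\bfx)$, the unique element of $T_\bfx$ with $\bfv^{\mathrm T}\cdot\partial_i\bfy_0 = \bar v_i$; it depends smoothly on $\bfx$ and is a pointwise-linear (hence algebraic) function of $(\bar v_1,\ldots,\bar v_n)$. \emph{Locating the correction.} For the normal component, let $S(\bfx) \in \mathcal{S}_n$ be the symmetric matrix with $(i,j)$-entry $\tfrac12(\nabla_i\bar v_j + \nabla_j\bar v_i - h_{ij})(\bfx)$. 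The reduced system \eqref{equation:reduced} says precisely that $\langle A^k(\bfx), S(\bfx)\rangle = 0$ for $k = 1,\ldots,n$; since the $A^k(\bfx)$ form a basis of $\II_\bfx^\perp$, this means $S(\bfx)$ annihilates all of $\II_\bfx^\perp$, and since the pairing \eqref{define-pairing} between $\mathcal{S}_n$ and $\mathcal{S}_n^*$ is nondegenerate we have $(\II_\bfx^\perp)^\perp = \II_\bfx$, whence $S(\bfx) \in \II_\bfx$.

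\emph{Normal component and assembly.} Here is the crux. The linear map $H_\bfx:\R^N \to \mathcal{S}_n$ has image $\II_\bfx$ and its kernel contains $T_\bfx$; by Assumption~\ref{nondegeneracy-assumption}, $\dim\II_\bfx = \tfrac12 n(n-1)$, so rank--nullity gives $\dim\ker H_\bfx = N - \tfrac12 n(n-1) = \tfrac12 n(n+1) - \tfrac12 n(n-1) = n = \dim T_\bfx$; hence $\ker H_\bfx = T_\bfx$, and $H_\bfx$ restricts to a linear isomorphism from $T_\bfx^\perp$ onto $\II_\bfx$, depending smoothly on $\bfx$. Define $\bfv^\perp(\bfx) \in T_\bfx^\perp$ to be the unique preimage of $S(\bfx)$ under this isomorphism, so that $\bfv^\perp\cdot H_{ij} = \tfrac12(\nabla_i\bar v_j + \nabla_j\bar v_i - h_{ij})$, and put $\bfv = \bfv^{\mathrm T} + \bfv^\perp$. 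Then $\bfv\cdot\partial_i\bfy_0 = \bar v_i$ and $\bfv\cdot H_{ij} = \tfrac12(\nabla_i\bar v_j + \nabla_j\bar v_i - h_{ij})$, so $\bfv$ satisfies \eqref{equation:rewritten}; and since \eqref{equation:rewritten}, \eqref{equation:parts}, and \eqref{equation:linearized} are mutually equivalent once $\bar v_i$ is read as $\bfv\cdot\partial_i\bfy_0$ (via \eqref{second-derivatives-decomp} and the product rule), $\bfv$ solves \eqref{equation:linearized}. Uniqueness follows because \emph{any} solution of \eqref{equation:linearized} automatically satisfies \eqref{equation:rewritten}, which forces $\bfv\cdot\partial_i\bfy_0$ and $\bfv\cdot H_{ij}$ to equal the prescribed values, and $\{\partial_i\bfy_0\}\cup\{H_{ij}\}$ spans $\R^N$ (equivalently $\operatorname{span}\{H_{ij}\} = T_\bfx^\perp$, by the dimension count applied to the adjoint of $H_\bfx$). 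The assertion that $\bfv$ is determined ``algebraically'' means that $\bfv^\perp$ is obtained by applying the inverse of the $\bfx$-dependent linear map $H_\bfx|_{T_\bfx^\perp}$ to a pointwise-linear combination of the $\bar v_i$ and their first derivatives, with no further differential equation to solve.

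The argument is essentially linear algebra with a smooth parameter; the only points requiring genuine (if routine) checking are the reflexivity $(\II_\bfx^\perp)^\perp = \II_\bfx$, the identity $\ker H_\bfx = T_\bfx$, and the smooth dependence of the inverse isomorphism $\II_\bfx \to T_\bfx^\perp$ on $\bfx$. I expect the identification $\ker H_\bfx = T_\bfx$ --- equivalently, that the vectors $H_{ij}(\bfx)$ already span the entire normal space $T_\bfx^\perp$ --- to be the conceptual heart of the matter, and it is precisely there that nondegeneracy enters.
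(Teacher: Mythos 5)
Your argument is correct and follows essentially the same route as the paper's proof: you show that the reduced equations \eqref{equation:reduced} force the symmetric matrix $S(\bfx)=-\tfrac12[\eta_{ij}(\bfx)]$ to lie in $\II_\bfx$, and then use nondegeneracy to solve the algebraic system \eqref{alg-sys-for-u} uniquely, concluding via the equivalence of \eqref{equation:rewritten} and \eqref{equation:linearized}. The only difference is that you spell out what the paper compresses into ``$H_\bfx$ has maximal rank''---namely $\ker H_\bfx = T_\bfx$, the identification $\operatorname{span}\{H_{ij}\}=T_\bfx^\perp$, and the isomorphism $H_\bfx|_{T_\bfx^\perp}\colon T_\bfx^\perp\to\II_\bfx$---which is a welcome elaboration, not a new approach.
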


\begin{proof}
Suppose that $\bar{v}_1, \dots, \bar{v}_n$ satisfy
\eqref{equation:reduced}, and define
\begin{equation}\label{define-etaij}
\eta_{ij} = h_{ij} - \nabla_i \bar{v}_j - \nabla_j \bar{v}_i, \qquad 1 \leq i,j \leq n.
\end{equation}
Equation \eqref{equation:reduced} implies that $[\eta_{ij}(\bfx)] \in \II_{\bfx}$ for
each $\bfx \in \Omega$.  Assumption \ref{nondegeneracy-assumption} implies that $H_{\bfx}$ has maximal rank so that,
for each $\bfx \in \Omega$, there exists a unique $\bfv(\bfx) \in \R^N$ such that
\begin{equation}
\begin{aligned}
\bfv(\bfx) \cdot\partial_i\bfy_0(\bfx) &= \bar{v}_i(\bfx) , &\qquad 1 &\leq i \leq n,
\\
-2 \bfv(\bfx) \cdot H_{ij}(\bfx) &= \eta_{ij}(\bfx), &\qquad 1 &\leq i,j \leq n.
\end{aligned} \label{alg-sys-for-u}
\end{equation}
Therefore, the map $\bfv: \Omega \rightarrow \R^N$ satisfies
\eqref{equation:rewritten}, which in turn is equivalent to
\eqref{equation:linearized}.
\end{proof}

It follows from Proposition \ref{reduced-system-prop} that, in order to solve the linearized equations \eqref{equation:linearized}, 
it suffices to solve equations \eqref{equation:reduced}.
This system can be written as
\begin{equation}
A^{kij} (\partial_i \bar{v}_j + \partial_j \bar{v}_i - 2\Gamma^{\ell}_{ij} \bar{v}_{\ell}) = A^{kij} h_{ij}, \qquad k = 1, \dots, n. 
\label{equation:reduced-2a}
\end{equation}
Since $A^{kij} = A^{kji}$ and $\Gamma^k_{ij} = \Gamma^k_{ji}$, this is equivalent to the system
\begin{equation}
A^{kij} (\partial_i \bar{v}_j  - \Gamma^{\ell}_{ij} \bar{v}_{\ell}) = \tfrac{1}{2}A^{kij} h_{ij}, \qquad k = 1, \dots, n. \label{equation:reduced-2}
\end{equation}
We can write this system in matrix form as follows: For $i = 1, \dots, n$, let $\bar{A}^i$ denote the matrix
\[ \bar{A}^i = [A^{kij}] =  \begin{bmatrix} A^{1i1} & \cdots & A^{1in} \\[0.1in] \vdots & & \vdots \\[0.1in] A^{ni1} & \cdots & A^{nin} \end{bmatrix}. \]
Then the system \eqref{equation:reduced-2} can be written as
\begin{equation}
\bar{A}^i \partial_i \bar{\bfv} + B \bar{\bfv} = \bfh, \label{equation:unsymmetrized-system}
\end{equation}
where
\begin{equation}
 \bar{\bfv} = [ \bar{v}_j ], \qquad
 B = [B^{kj}] = [ -A^{k\ell m} \Gamma^{j}_{\ell m} ], \qquad \bfh = [\tfrac{1}{2}A^{k\ell m} h_{\ell m} ], \qquad 1 \leq j,k,\ell, m \leq n. \label{define-matrices}
\end{equation}

Our proof of Theorem \ref{main-theorem-3D} is based on the following key result.

\begin{proposition}\label{key-ssp-prop}
Suppose that the system \eqref{equation:unsymmetrized-system} is strongly symmetric positive at $\bfx = \mathbf{0}$.  
Then there exist a neighborhood $\Omega_0 \subset \Omega$ of $\bfx = \mathbf{0}$, an integer $\beta$, and $\epsilon > 0$ such that, 
for any $C^\infty$ metric $g$ on $\Omega_0$ with $\|g - \bar{g}\|_\beta < \epsilon$, 
there exists a $C^\infty$ solution $\bfy:\Omega_0 \to \R^N$ to the isometric embedding system \eqref{equation:isometric}.

Moreover, the conclusion holds if the system \eqref{equation:unsymmetrized-system} becomes strongly symmetric positive after performing a change of variables of the form
\begin{equation}
 \bar{\bfx} = \phi(\bfx), \qquad \bar{\bfw} = S(\bfx)\bar{\bfv}, \label{general-change-of-vars}
\end{equation}
where $\phi:\Omega \to \R^n$ is a local diffeomorphism of $\Omega$ with $\phi(\mathbf{0}) = \mathbf{0}$, 
and $S:\Omega \to \R^{n \times n}$ is a $C^\infty$, $n\times n$ matrix-valued function on $\Omega$ with $S(\mathbf{0})$ invertible.
\end{proposition}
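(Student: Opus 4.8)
The plan is to prove Proposition \ref{key-ssp-prop} by running the Nash--Moser implicit function theorem (Theorem \ref{Nash-Moser-theorem}) for the nonlinear operator $\Phi(\bfy) = [\partial_i\bfy\cdot\partial_j\bfy]$ directly, with $\bfy_0$ as the approximate solution (recall $\Phi(\bfy_0) = \bar g$), following the same template as the proof of Theorem \ref{nonlinear-theorem}. The one difference is that the role of the ``linearized operator'' whose right inverse must satisfy smooth tame estimates is played not by $\Phi'(\bfy)$ itself --- which is a determined $N\times N$ first-order system but is not of symmetric positive type --- but by the reduced first-order system \eqref{equation:unsymmetrized-system}, which by Proposition \ref{reduced-system-prop} is equivalent to the linearized equation \eqref{equation:linearized}. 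The verifications that $\Phi$ is a smooth (indeed polynomial) differential operator, that it admits the smoothing operators required by Theorem \ref{Nash-Moser-theorem}, and that the quadratic remainder $\Phi(\bfy+\bfv)-\Phi(\bfy)-\Phi'(\bfy)\bfv = [\partial_i\bfv\cdot\partial_j\bfv]$ obeys the bounds \eqref{Nash-Moser-Phi-bounds} all go through as in \S\ref{proof-1-step-3-sec} via the Gagliardo--Nirenberg and Sobolev inequalities. Thus the only substantive step is the construction of a tame right inverse of $\Phi'(\bfy)$ as $\bfy$ ranges over a neighborhood of $\bfy_0$.

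For that step, I would first note that the hypotheses needed to run the argument of \S\S\ref{proof-1-step-1-sec}--\ref{proof-1-step-3-sec} for the reduced system are open in $\bfy$: nondegeneracy (Assumption \ref{nondegeneracy-assumption}) persists under small $C^1$ perturbations of $\bfy_0$, so $H_\bfx$ retains maximal rank; and, by the argument of Lemma \ref{choose-nbhd-lemma} applied with the linearization point $\bfy$ in place of $\bfu$, the quadratic forms $Q_0,Q_1$ of the reduced system at $\bfy$ stay close to those at $\bfy_0$ --- hence positive definite --- on a sufficiently small ball $\Omega_0 = B_r$ and for $\bfy$ sufficiently close to $\bfy_0$. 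The remaining hypothesis, that the reduced system be \emph{symmetric}, is not open in the coefficients, so one must choose the basis $A^1_\bfy,\dots,A^n_\bfy$ of the annihilator $\II_\bfx^\perp$ (which depends on $\bfy$ through $H_{ij}$) to vary smoothly with $\bfy$ in such a way that symmetry of the matrices $\bar A^i_\bfy = [A^{kij}_\bfy]$ is preserved; since the given basis symmetrizes the system at $\bfy_0$, this is possible on a small enough neighborhood. With these in hand, the restriction--extension device of \S\S\ref{proof-1-step-2-sec}--\ref{proof-1-step-3-sec} applies verbatim to \eqref{equation:unsymmetrized-system}: extend its coefficients from $B_r$ to $\R^n$ by Stein's operator (Theorem \ref{Stein-theorem}), observe that $B_R$ is $P$-convex for the extension once $R$ is large, and run the differentiated energy estimates to obtain, for the restriction $\bar\bfv$ of the solution to $B_r$, smooth tame bounds of the form \eqref{v-order-k}. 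Because the coefficients of \eqref{equation:unsymmetrized-system} are $C^\infty$ functions of $\bfy$ together with its first \emph{and second} derivatives (through $\Gamma^k_{ij}$, $H_{ij}$, and $A^k_\bfy$), the derivative loss is a fixed constant, larger than in \S\ref{proof-sec} but finite --- all that Nash--Moser requires. Finally, the full solution $\bfv$ of \eqref{equation:linearized} is recovered from $\bar\bfv$ algebraically via \eqref{alg-sys-for-u}, using that nondegeneracy gives $H_\bfx$ a smooth right inverse, so that $\bfv$ inherits tame estimates from $\bar\bfv$ and $\bfh$ by standard Sobolev product and composition estimates.

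The ``moreover'' statement follows along the same lines. The change of variables \eqref{general-change-of-vars} --- a local diffeomorphism $\phi$ fixing $\mathbf{0}$ followed by multiplication by a smooth matrix function $S$ with $S(\mathbf{0})$ invertible --- carries \eqref{equation:unsymmetrized-system} to an equivalent first-order system for $\bar\bfw$ on $\phi(\Omega)$, whose solutions correspond bijectively and smoothly to those of \eqref{equation:unsymmetrized-system}. If this transformed system is strongly symmetric positive at $\mathbf{0}$ (and made symmetric for nearby $\bfy$ by, in addition, letting the auxiliary data $\phi,S$ vary smoothly with $\bfy$ if necessary), then the preceding argument, applied to it, gives tame estimates for $\bar\bfw$ on a ball about $\mathbf{0}$; since $\phi$, $S$, $S^{-1}$ are $C^\infty$ and independent of the smallness parameter, composing and multiplying transfers these to tame estimates for $\bar\bfv = S^{-1}(\bar\bfw\circ\phi)$ on $\Omega_0 = \phi^{-1}(\text{that ball})$, and the algebraic recovery of $\bfv$ then proceeds as before. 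Feeding these smooth tame estimates for the right inverse of $\Phi'(\bfy)$ into Theorem \ref{Nash-Moser-theorem} produces a $C^\infty$ solution $\bfy:\Omega_0\to\R^N$ of \eqref{equation:isometric} for every $g$ with $\|g-\bar g\|_\beta$ small (shrinking $\Omega_0$ further if needed so that $\bfy$, being a small $C^1$-perturbation of the embedding $\bfy_0$, is itself an embedding).

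The main obstacle, exactly as in Part 1, is that strong symmetric positivity of the reduced system near $\bfx=\mathbf{0}$ does not by itself produce a $P$-convex neighborhood of $\mathbf{0}$, so Tso's global theorem cannot be applied directly; the way around it is precisely the restrict-to-$B_r$-then-extend-to-$B_R$ construction of \S\ref{proof-sec}. The two subtleties special to this proposition are the extra derivative-loss bookkeeping caused by the coefficients of the reduced system depending on second derivatives of the linearization point, and ensuring that the auxiliary data --- the basis $A^k_\bfy$, and in the last part the change of variables $\phi$, $S$ --- can be chosen to keep the system symmetric throughout a full neighborhood of $\bfy_0$, not merely at $\bfy_0$ itself.
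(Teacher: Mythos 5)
Your proposal is correct and follows essentially the same route as the paper: obtain smooth tame estimates for the reduced tangential system \eqref{equation:unsymmetrized-system} at linearization points near $\bfy_0$ by the restrict-to-$B_r$/extend-to-$B_R$ argument of Theorem \ref{nonlinear-theorem}, recover $\bfv$ algebraically via \eqref{define-etaij}--\eqref{alg-sys-for-u} using nondegeneracy, transfer estimates through the change of variables \eqref{general-change-of-vars} by the boundedness of the induced maps on $H^k$, and conclude with Theorem \ref{Nash-Moser-theorem}. Your extra remarks on keeping the annihilator basis (hence symmetry) smoothly chosen for nearby $\bfy$ and on the slightly larger but fixed derivative loss are points the paper's terse proof glosses over, but they do not change the argument.
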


\begin{proof}
First, suppose that the system \eqref{equation:unsymmetrized-system} is strongly symmetric positive at $\bfx = \mathbf{0}$.
The argument from the proof of Theorem \ref{nonlinear-theorem} shows that, under the hypotheses of the proposition, 
there exists a neighborhood $\Omega_0 \subset \Omega$ of $\bfx = \mathbf{0}$ on which the system \eqref{equation:unsymmetrized-system}
corresponding to the linearization of \eqref{equation:isometric} at any function $\bfy:\Omega_0 \to \R^N$ sufficiently close to $\bfy_0$ has a solution $\bar{\bfv}$
that satisfies the estimates of the form:
\begin{equation}
 \|\bar{\bfv} \|_k \leq C'_k \left( \|\mathbf{h}\|_{k}  + \|\bfh\|_0\|\bfy - \bfy_0\|_{k+3+[\frac{n}{2}]}  \right) , \qquad k \geq 0, \label{subsystem-estimates}
\end{equation}
for some constants $C'_k$.
Then it follows from equation \eqref{define-etaij} that
\[
\|\eta\|_k \leq C''_k \left( \|\mathbf{h}\|_{k+1}  + \|\bfh\|_0\|\bfy - \bfy_0\|_{k+4+[\frac{n}{2}]}  \right) , \qquad k \geq 0,
\]
for some constants $C''_k$.
These estimates, together with equations \eqref{alg-sys-for-u} and Assumption \ref{nondegeneracy-assumption}, imply (possibly after shrinking $\Omega_0$) that
\begin{equation}
 \| \bfv \|_k \leq C_k \left( \|\mathbf{h}\|_{k+1}  + \|\bfh\|_0\|\bfy - \bfy_0\|_{k+4+[\frac{n}{2}]}  \right) , \qquad k \geq 0 \label{full-system-estimates}
\end{equation}
for some constants $C_k$.  The existence of a solution $\bfy:\Omega_0 \to \R^N$ to the system \eqref{equation:isometric} then 
follows from Theorem \ref{Nash-Moser-theorem}, just as in the proof of Theorem \ref{nonlinear-theorem}.

For the second statement, assume that $\Omega_0$ has been chosen so that the restriction of $\phi$ to $\Omega_0$
is smoothly invertible and the matrix $S(\bfx)$ is invertible for all $\bfx \in \Omega_0$, with the determinant 
of $S(\bfx)$ bounded away from $0$.  Then it suffices to observe that a change of coordinates of the form \eqref{general-change-of-vars} 
induces linear maps $\psi_k: H^k(\Omega_0) \to H^k(\phi(\Omega_0))$ defined by
\[
\psi_k(\bar{\bfv}) = \bar{\bfw},
\]
and that these maps are continuous with continuous inverse.
Thus the estimates of the form \eqref{subsystem-estimates}
for the function $\bar{\bfw}$ imply similar estimates for $\bar{\bfv}$, which in turn imply the estimates \eqref{full-system-estimates} for $\bfv$.
\end{proof}

Thus it remains to show that, under the hypotheses of Theorem \ref{main-theorem-3D}, the approximate embedding $\bfy_0:\Omega \to \R^N$ 
can be chosen so that the linearized system \eqref{equation:unsymmetrized-system} becomes strongly symmetric positive at $\bfx = \mathbf{0}$ 
after a change of variables of the form \eqref{general-change-of-vars}.

\section{Symmetrization}

The matrices $\bar{A}^i$ in the system \eqref{equation:unsymmetrized-system} are not necessarily symmetric, 
because the functions $A^{kij}$ and $A^{jik}$ are not necessarily equal.  
The system \eqref{equation:unsymmetrized-system} can be re-expressed as a symmetric system 
if and only if there exists an invertible $n \times n$ matrix $C$ such that the matrices
\[ C\bar{A}^1, \dots, C\bar{A}^n \]
are all symmetric, in which case multiplying the system \eqref{equation:unsymmetrized-system} by $C$ results in a symmetric system.

Observe that multiplying \eqref{equation:unsymmetrized-system} by an invertible matrix $C$ is equivalent to replacing 
the basis $A^1, \ldots, A^n$ for the annihilator $\II_{\bfx}^{\perp}$ at each point with the alternate basis
\[ A'^{\ell} = C^{\ell}_k A^k . \]
Moreover, a given basis $A^1, \dots, A^n$ for $\II_{\bfx}^{\perp}$ will lead to symmetric matrices $\bar{A}^1, \dots, \bar{A}^n$ if and only if
\begin{equation}
 A^{kij} = A^{jik}; \label{equation:symmetry-condition}
\end{equation}
i.e., if and only if the coefficients $A^{ijk}$ are symmetric in all their indices.  
Therefore, in order to determine whether the system \eqref{equation:unsymmetrized-system} is symmetrizable, 
it suffices to determine whether there exists a basis
$A^k = [A^{kij}]$
for $\II_{\bfx}^{\perp}$ for which the coefficients $A^{kij}$ are symmetric in all their indices.  
If we choose such a basis for $\II_{\bfx}^{\perp}$, then we will have
\[ \bar{A}^k = A^k, \]
and there will be no need to distinguish between the two.

\begin{proposition}\label{symmetrization-prop}
When $n=2$ or $n=3$, the linearized system \eqref{equation:unsymmetrized-system} is symmetrizable.
\end{proposition}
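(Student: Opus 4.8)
The plan is to recast symmetrizability as a problem about cubic forms. As explained in the paragraphs preceding the proposition, the system \eqref{equation:unsymmetrized-system} is symmetrizable exactly when the annihilator $\II_{\bfx}^\perp\subset\mathcal{S}_n^*$ (which has dimension $n$ by Assumption \ref{nondegeneracy-assumption}) admits a basis $A^1,\dots,A^n$ whose coefficients $A^{kij}$ satisfy the full symmetry condition \eqref{equation:symmetry-condition}. Encoding a totally symmetric array $(T^{kij})$ as the cubic form $p(t)=T^{kij}t_kt_it_j$ on $\R^n$, and noting that $\tfrac13\partial_k p$ is the quadratic form with matrix $[T^{kij}]_{ij}$, this is in turn equivalent to the following: \emph{there is a cubic form $p$ on $\R^n$ whose first partial derivatives $\partial_1p,\dots,\partial_np$ form a basis of $\II_{\bfx}^\perp$} (identifying $\mathcal{S}_n^*$ with the space of quadratic forms on $\R^n$). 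So it suffices, at each $\bfx$, to produce such a $p$.

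First I would show that the space $L$ of cubic forms all of whose first partials lie in $\II_{\bfx}^\perp$ is nonzero. Cubic forms on $\R^n$ span a space of dimension $\binom{n+2}{3}$, and each of the $n$ requirements $\partial_k p\in\II_{\bfx}^\perp$ imposes $\binom{n+1}{2}-n=\binom n2$ linear conditions, so $\dim L\ge\binom{n+2}{3}-n\binom n2$. This equals $2$ for $n=2$ and $1$ for $n=3$ --- and is negative once $n\ge 4$, which is the linear-algebra reason the argument stops at $n=3$. It then remains to choose $p\in L$ whose partials actually \emph{span} $\II_{\bfx}^\perp$: if $\partial_1p,\dots,\partial_np$ are linearly dependent then, after a linear change of variables, $p$ is a cubic in at most $n-1$ of the variables, so its partials span a space of dimension $\le n-1<n$, and this must be avoided. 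For $n=2$ it is immediate: using the $\operatorname{GL}(2)$-action to put $\II_{\bfx}^\perp$ into one of the finitely many normal forms for a plane of binary quadratic forms, one exhibits in each case an explicit binary cubic (not a perfect cube) whose two partials span that plane.

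The case $n=3$ is the heart of the matter, and I expect it to be the main obstacle: here the count only gives $\dim L\ge 1$, so one cannot pass to a better element of $L$ and must instead prevent the generator of $L$ from degenerating to a cubic in two variables. I would organize this by the $\operatorname{GL}(3)$-orbit type of $\II_{\bfx}^\perp$, i.e.\ by the classification of $3$-dimensional spaces of ternary quadratic forms (``nets of conics''): for a generic such space the spanning property follows because the map $p\mapsto\operatorname{span}\{\partial_1p,\partial_2p,\partial_3p\}$ is dominant onto the relevant Grassmannian, so the essentially unique element of $L$ is a genuine ternary cubic; the exceptional spaces for which this fails --- those consisting of quadratic forms pulled back along a linear surjection $\R^3\to\R^2$, together with the other degenerate nets --- would be ruled out using the freedom, exploited throughout the proof of Theorem \ref{main-theorem-3D}, to choose the approximate embedding $\bfy_0$ (equivalently, its second fundamental form, which is what determines $\II_{\bfx}^\perp$).
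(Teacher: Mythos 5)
Your core argument is the one the paper uses: both proofs reduce symmetrizability to finding a cubic form on $\R^n$ whose $n$ partial derivatives (equivalently, the slices $[A^{kij}]_{ij}$ of a totally symmetric array) lie in $\II_{\bfx}^\perp$, and both rest on the same dimension count ($4-2=2$ for $n=2$, $10-9=1$ for $n=3$, negative for $n\ge 4$). Where you differ is that you correctly insist the partials $\partial_1p,\dots,\partial_np$ be \emph{linearly independent}, so that they form a basis of $\II_{\bfx}^\perp$ rather than merely a subset; the paper's proof only asks for a ``nonvanishing element'' of the solution space, which is strictly weaker (a cube $\ell^3$ is nonzero but has dependent partials), and the basis property is genuinely needed for Proposition \ref{reduced-system-prop} to apply. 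For $n=2$ your concern is easily discharged without normal forms: the binary cubics with dependent partials are exactly the cubes of linear forms, and a $2$-dimensional linear space of binary cubics cannot lie entirely in the cone over the twisted cubic, so a generic element of the solution space works. For $n=3$, however, your resolution is only a declared plan (classify nets of conics, exclude the bad ones by the freedom in choosing $\bfy_0$), not a proof; this is the one step you leave open. It is in fact exactly how the paper proceeds, not inside this proposition but in the subsequent normal-form section: for $\hat R\neq 0$ the space $\II_{\mathbf 0}$ is put in the general form \eqref{H-normal-form-3D}, and the symmetric annihilator basis \eqref{A-normal-form-3D} --- the partials of $-2\sigma\bigl((t^1)^3+(t^2)^3+(t^3)^3\bigr)+6\,t^1t^2t^3$ --- is exhibited explicitly and is visibly linearly independent. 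So your proposal matches the paper on substance, is more careful than the paper's own two-line proof on the spanning issue, but the $n=3$ spanning step as you describe it still needs the explicit construction (or an equivalent case analysis) to be complete.
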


\begin{proof}
When $n=2$, we have $N = 3$.
Choose any smoothly varying basis element $H^3(\bfx) \in \II_{\bfx}$.  
Consider the 4-dimensional space of all symmetric cubic forms
\[ 
A = A^{ijk} \left(\frac{\partial}{\partial x^i} \circ \frac{\partial}{\partial x^j}\circ \frac{\partial}{\partial x^k} \right) \in S^3(T\R^2), 
\]
and for $k=1,2$, let $A^k$ denote the matrix $A^k = [A^{kij}]$.  The annihilator equations
\[
\langle A^k, H^3 \rangle = 0, \qquad k=1,2,
\]
form a system of $2$ homogeneous linear equations for the 4 functions $A^{kij}$.  Thus there must be at least a $2$-dimensional solution 
space at each point $\bfx \in U$, and choosing $A(\bfx)$ to be any smoothly varying, nonvanishing element of this space 
produces a symmetric linearized system \eqref{equation:unsymmetrized-system}.

When $n=3$, we have $N = 6$.
Choose any smoothly varying basis $(H^4(\bfx), H^5(\bfx), H^6(\bfx))$ for the space $\II_{\bfx}$.  Consider the 10-dimensional space of all symmetric cubic forms
\[ 
A = A^{ijk} \left(\frac{\partial}{\partial x^i} \circ \frac{\partial}{\partial x^j}\circ \frac{\partial}{\partial x^k} \right) \in S^3(T\R^3), 
\]
and for $k=1,2,3$, let $A^k$ denote the matrix $A^k = [A^{kij}]$.  The annihilator equations
\[
\langle A^k, H^\alpha \rangle = 0, \qquad k=1,2,3, \ \ \alpha=4,5,6,
\]
form a system of $9$ homogeneous linear equations for the 10 functions $A^{kij}$.  Thus there must be at least a $1$-dimensional 
solution space at each point $\bfx \in U$, and choosing $A(\bfx)$ to be any smoothly varying, nonvanishing element 
of this space produces a symmetric linearized system \eqref{equation:unsymmetrized-system}.
\end{proof}

\begin{remark}
The result of Proposition \ref{symmetrization-prop} does not hold for a generic choice of $\II_\bfx$ when $n \geq 4$; 
this is the primary obstruction to applying our methods to the isometric embedding problem in higher dimensions.
\end{remark}

For the remainder of this paper, we will restrict to the cases $n=2$ and $n=3$. We will assume that the functions $A^{kij}$ are symmetric 
in all their indices, so that the matrices $\bar{A}^i$ in the linear system \eqref{equation:unsymmetrized-system} are symmetric and 
may be identified with the matrices $A^i$.
We will use the convention that Roman indices ($i,j,k$, etc.) range from $1$ to $n$, 
while Greek indices ($\alpha, \beta, \gamma$, etc.) range from $(n+1)$ to $N = \tfrac{1}{2}n(n+1)$.

\section{Compatibility equations and normal forms}

In this section, we will show how the Gauss and Codazzi equations (also called the ``compatibility equations") 
for the embedding $\bfy_0:\Omega \to \R^N$ introduce constraints on the values of the matrices $A^i$ (now assumed to be symmetric) 
and their first derivatives at $\bfx = \mathbf{0}$, and
we will show how the matrices $A^i$ can be put into a simple normal form at the point $\bfx = \mathbf{0}$.

Let $\bfx = (x^1, \ldots, x^n)$ be local coordinates on $\Omega$ centered at $\bfx = \mathbf{0}$.  
We will assume that $\bfx$ is a {\em normal} coordinate system at $\mathbf{0}$ with respect to the metric $g$ on $\Omega$,
i.e., that $\Gamma^k_{ij}(\mathbf{0}) = 0$ for $1 \leq i,j,k \leq n$.  
We will {\em not}, however, assume that $g_{ij}(\mathbf{0}) = \delta_{ij}$, because our argument will involve a nontrivial $GL(n,\R)$ action 
on the tangent space $T_{\mathbf{0}}M$. The specific values of $g_{ij}(\mathbf{0})$ will not affect our argument, in any case.

Let $\bar{g}$ be a real analytic metric on $\Omega$ that agrees with $g$ up to order 
at least $\beta$ (where $\beta$ is as in Proposition \ref{key-ssp-prop}) at $\bfx = \mathbf{0}$, 
and note that this implies that the Riemann curvature tensors of $g$ and $\bar{g}$ agree up to order at least $(\beta-2)$ at $\bfx = \mathbf{0}$.  
By the Cartan-Janet isometric embedding theorem \cite{Jacobowitz82}, 
there exists a real analytic isometric embedding (possibly on a smaller neighborhood) $\bfy_0: \Omega \to \R^N$ of $(\Omega, \bar{g})$ into $\R^N$.

Let $(\e_{n+1}, \ldots, \e_N)$ be a smoothly varying orthonormal basis for the normal bundle 
of the embedded submanifold $\bfy_0(\Omega) \subset \R^N$, chosen so that
\begin{equation}
   \nabla^\perp_{\bfw} \e_{\alpha} (\mathbf{0}) = \mathbf{0} \label{equation:normal-normal-bundle}
\end{equation}
for $n+1 \leq \alpha \leq N$ and all $\bfw \in T_{\mathbf{0}} M$, 
where $\nabla^\perp$ denotes the connection on the normal bundle induced by the Euclidean connection on $\R^N$.  
This condition is the analog for the normal bundle of the normal coordinates condition $\Gamma^k_{ij}(\mathbf{0}) = 0$.  
Then we can write the second fundamental form of $\bfy_0$ as
\begin{equation}
 H_{ij} dx^i \circ dx^j = \e_{\alpha} \otimes H^\alpha_{ij} dx^i \circ dx^j 
 \label{equation:second-fundamental-form-3-d}
\end{equation}
for scalar-valued functions $H^\alpha_{ij}: \Omega \to \R$.

The embedding $\bfy_0:\Omega \to \R^N$ must satisfy the following conditions at $\bfx=\mathbf{0}$:
\begin{itemize}
 \item Metric conditions:
 \begin{gather*}
  (\partial_i \bfy_0 \cdot \partial_j \bfy_0)\vert_{\bfx = \mathbf{0}} = g_{ij}(\mathbf{0}), \ \ \ 1 \leq i,j \leq n, \\
 \nabla_i (\partial_j \bfy_0)\vert_{\bfx = \mathbf{0}}  = (\Gamma^k_{ij} \partial_k \bfy_0)\vert_{\bfx = \mathbf{0}} = \mathbf{0}, \ \ \ 1 \leq i,j,k \leq n;
\end{gather*}
\item Gauss equations and their first derivatives:
\begin{align}
 \left(\sum_{\alpha = n+1}^N (H^{\alpha}_{ik} H^{\alpha}_{j\ell} - H^{\alpha}_{i\ell} H^{\alpha}_{jk}) \right)
 \bigg{\vert}_{\bfx = \mathbf{0}} & = R_{ijk\ell}(\mathbf{0}), \ \ \  1 \leq i,j,k, \ell \leq n, \label{Gauss-eq-at-0} \\
 \partial_m \left( \sum_{\alpha = n+1}^N (H^{\alpha}_{ik} H^{\alpha}_{j\ell} - H^{\alpha}_{i\ell} H^{\alpha}_{jk})  \right) 
 \bigg{\vert}_{\bfx = \mathbf{0}} & = (\partial_m R_{ijk\ell}) (\mathbf{0}),  \ \ \  1 \leq i,j,k, \ell, m \leq n,  \label{d-Gauss-eq-at-0}
\end{align}
where $R_{ijk\ell}$ denotes the components of the Riemann curvature tensor of $(M,g)$;
\item Codazzi equations:
\begin{equation}
 (\partial_i H^\alpha_{jk})\big{\vert}_{\bfx = \mathbf{0}} =
(\partial_j H^\alpha_{ki})\big{\vert}_{\bfx = \mathbf{0}} =
(\partial_k H^\alpha_{ij})\big{\vert}_{\bfx = \mathbf{0}}, \ \ \ 1 \leq i,j,k \leq n, \ \ \ n+1 \leq \alpha \leq N. \label{Codazzi-eq-at-0}
\end{equation}
This form of the Codazzi equations at $\bfx = \mathbf{0}$ relies on the normal coordinates 
condition $\Gamma^i_{jk}(\mathbf{0}) = 0$ and the condition \eqref{equation:normal-normal-bundle} on the covariant derivatives of $\e_{\alpha}$.
\end{itemize}
Conversely, the Cartan-Janet theorem guarantees that, for any choice of real numbers $H^\alpha_{ij}(\mathbf{0})$ 
and $\partial_k H^\alpha_{ij}(\mathbf{0})$ satisfying equations \eqref{Gauss-eq-at-0}--\eqref{Codazzi-eq-at-0}, 
there exists a real analytic isometric embedding $\bfy_0:\Omega \to \R^N$ of $(\Omega, \bar{g})$ (possibly after shrinking $\Omega$) 
whose second fundamental form agrees with the given values up to first order at $\bfx = \mathbf{0}$.

\begin{notation}
Henceforth, we will only be concerned with the values of $H^\alpha_{ij}$, $A^{kij}$, and their first derivatives at $\bfx = \mathbf{0}$.  Thus we will use the following notations:
\begin{itemize}
\item $H^\alpha_{ij}$ will denote the real number $H^\alpha_{ij}(\mathbf{0})$, and $H^\alpha$ will denote the matrix $[H^\alpha_{ij}]$.

\item $h^\alpha_{kij}$ will denote the real number $\partial_k H^\alpha_{ij}(\mathbf{0})$, and $h^\alpha_k$ will denote the matrix $[h^\alpha_{kij}]$.  
Note that the Codazzi equations \eqref{Codazzi-eq-at-0} are equivalent to the condition that the $h^\alpha_{kij}$ are fully symmetric in their lower indices.

\item $A^{kij}$ will denote the real number $A^{kij}(\mathbf{0})$, and $A^k$ will denote the matrix $[A^{kij}]$.

\item $a^{kij}_\ell$ will denote the real number $\partial_\ell A^{kij}(\mathbf{0})$, and $a^k_{\ell}$ will denote the matrix $[a^{kij}_\ell]$.  
Note that the $a^{kij}_\ell$ are fully symmetric in their upper indices, but there are no symmetries involving the lower index.

\item $R_{ijk\ell}$ will denote the real number $R_{ijk\ell}(\mathbf{0})$.  Note that the $R_{ijk\ell}$ must satisfy the symmetries of the Riemann curvature tensor:
\[ R_{ijk\ell} = -R_{jik\ell} = -R_{ij\ell k} = R_{k\ell ij}. \]
When $n=2$, the only nonzero component of $R$ is the Gauss curvature $K = R_{1212}$; when $n=3$, $R$ has 6 nonzero components, 
represented by $R_{1212}$, $R_{2323}$, $R_{3131}$, $R_{1223}$, $R_{2331}$, $R_{3112}$.

\item $r_{ijk\ell,m}$ will denote the real number $\partial_m R_{ijk\ell}(\mathbf{0})$; when $n=2$, we will denote $r_{1212,1}$ 
and $r_{1212,2}$ by $k_1$ and $k_2$, respectively.   Note that the $r_{ijk\ell,m}$ must satisfy the same symmetries 
as the $R_{ijk\ell}$ in their first four indices, together with the second Bianchi identities.
    When $n=2$, the second Bianchi identities are trivial; when $n=3$, they are represented by the three equations:
\begin{equation}
 r_{2323,1} + r_{2331,2} + r_{1223,3} =
r_{2331,1} + r_{3131,2} + r_{3112,3} =
r_{1223,1} + r_{3112,2} + r_{1212,3} = 0 . \label{second-Bianchi-eqns}
\end{equation}
\end{itemize}
\end{notation}

The values of $H^\alpha_{ij}, h^\alpha_{ijk}, A^{kij}, a^{kij}_\ell$ are constrained by the following relations 
and are otherwise arbitrary (apart from the nondegeneracy condition on the $H^\alpha_{ij}$):
\begin{itemize}
\item {\bf Gauss equations:}
\begin{equation}
\sum_{\alpha = n+1}^N (H^{\alpha}_{ik} H^{\alpha}_{j\ell} - H^{\alpha}_{i\ell} H^{\alpha}_{jk}) = R_{ijk\ell}, \qquad 1 \leq i,j,k, \ell \leq n; \label{Gauss-eqns}
\end{equation}
\item {\bf Codazzi equations:}
\begin{equation}
h^{\alpha}_{ijk} = h^{\alpha}_{ikj} = h^{\alpha}_{jik}, \qquad 1 \leq i,j,k  \leq n, \ \ n+1 \leq \alpha \leq N; \label{Codazzi-eqns}
\end{equation}
\item {\bf Annihilator equations:}
\begin{equation}
A^{kij} H^{\alpha}_{ij} = 0, \qquad 1 \leq k \leq n, \ \ n+1 \leq \alpha \leq N; \label{annihilator-eqns}
\end{equation}
\item {\bf Derivatives of the Gauss equations:}
\begin{equation}
\sum_{\alpha = n+1}^N
( H^{\alpha}_{ik} h^{\alpha}_{j\ell m} +  H^{\alpha}_{j\ell} h^{\alpha}_{ik m} - H^{\alpha}_{i\ell} h^{\alpha}_{jkm} - H^{\alpha}_{jk} h^{\alpha}_{i\ell m}) 
= r_{ijk\ell,m}, \qquad 1 \leq i,j,k, \ell, m \leq n;
  \label{d-Gauss-eqns}
\end{equation}
\item {\bf Derivatives of the annihilator equations:}
 \begin{equation}
 A^{kij} h^{\alpha}_{ij\ell} + H^{\alpha}_{ij} a^{kij}_{\ell} = 0, \qquad 1 \leq k, \ell \leq n, \ \ n+1 \leq \alpha  \leq N. \label{d-annihilator-eqns}
\end{equation}
\end{itemize}

It will be helpful to reduce to the case where the values $H^\alpha_{ij}$ and $A^{kij}$ take on relatively simple normal forms.
To this end, consider a linear transformation of the independent variables of the form:
\begin{equation}
 \bfx \to  \mathbf{g} \cdot \bfx \label{GL(n)-action}
\end{equation}
with $\mathbf{g} \in GL(n,\R)$.
This transformation induces an analogous action by $\mathbf{g}$ on the tangent and cotangent spaces $T_{\mathbf{0}}\R^n$ and $T^*_{\mathbf{0}}\R^n$,
and hence on the tensors
\begin{align*}
 R & = R_{ijk\ell} (dx^i \wedge dx^j) \circ (dx^k \wedge dx^\ell), \\
H & = \e_{\alpha} \otimes H^\alpha_{ij} dx^i\circ dx^j, \\
A & = A^{ijk} \left(\frac{\partial}{\partial x^i} \circ \frac{\partial}{\partial x^j}\circ \frac{\partial}{\partial x^k} \right),
\end{align*}
and their covariant derivatives, while preserving the normal coordinates condition $\Gamma^k_{ij}(\mathbf{0}) = 0$.

\subsection{Normal form for $n=2$}

When $n=2$, the subspace $\II_\mathbf{0} \subset \mathcal{S}_2$ is spanned by the matrix:
\[
H^3 =
\begin{bmatrix}  H^3_{11} & H^3_{12} \\[0.1in] H^3_{12} & H^3_{22}
\end{bmatrix}.
\]
The nondegeneracy of the embedding $\bfy_0:\Omega \to \R^N$ implies that the matrix $H^3$ is nonzero. Then, by an action of the form \eqref{GL(n)-action}, we can arrange that
\begin{equation}
 H^3 = \begin{bmatrix}  K & 0 \\[0.1in] 0 & 1
\end{bmatrix}, \label{H-normal-form-2D}
\end{equation}
where $K$ is the Gauss curvature of $(M,g)$ at $\bfx = \mathbf{0}$.
The annihilator equations \eqref{annihilator-eqns} then imply that we can choose
\begin{equation}
A^1 = \begin{bmatrix} 0 & 1 \\[0.1in] 1 & 0 \end{bmatrix}, \qquad
A^2 = \begin{bmatrix} 1 & 0 \\[0.1in] 0 & -K \end{bmatrix}.  \label{A-normal-form-2D}
\end{equation}

\subsection{Normal form for $n=3$}

When $n=3$, $\II_\mathbf{0} \subset \mathcal{S}_3$ is the subspace
\[ \II_\mathbf{0} = \text{span}(H^4, H^5, H^6), \]
where, for $\alpha=4,5,6$,
\[ 
H^\alpha = \begin{bmatrix} H^\alpha_{11} & H^\alpha_{12} & H^\alpha_{31} \\[0.1in] H^\alpha_{12} & H^\alpha_{22} 
& H^\alpha_{23} \\[0.1in] H^\alpha_{31} & H^\alpha_{22} & H^\alpha_{33} \end{bmatrix}.
 \]
Each symmetric matrix $H^\alpha$ may also be regarded as representing the quadratic form $H^\alpha_{ij} dx^i  dx^j$  $\in S^2(T^*_{\mathbf{0}}\R^3)$, 
or equivalently, the quadratic polynomial $H^\alpha_{ij} X^i X^j$.

Following \cite{BGY83}, we say that $\II_\mathbf{0}$ is {\em general} if there exists a nonsingular cubic polynomial
$Y = Y_{ijk} X^i X^j X^k$
such that
\[ 
\II_{\mathbf{0}} = \text{span}\left( \frac{\partial Y}{\partial X^1},  \frac{\partial Y}{\partial X^2}, \frac{\partial Y}{\partial X^3} \right). 
\]
In particular, $Y$ must depend on all three variables $X^1, X^2, X^3$.

The following classical lemma may be found, e.g., in \cite{Nowak00}:
\begin{lemma}
If $Y \in S^3(T^*_{\mathbf{0}}\R^3)$ is a nonsingular, homogeneous cubic polynomial, 
then there exists a unique real number $\sigma \neq -\tfrac{1}{2}$ and a basis $(X^1, X^2, X^3)$ of $T^*_{\mathbf{0}}\R^3$ such that
\[ Y = (X^1)^3 + (X^2)^3 + (X^3)^3 + 6\sigma X^1 X^2 X^3. \]
\end{lemma}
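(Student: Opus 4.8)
The plan is to run the classical proof, via the inflection points and the Hesse configuration of a smooth plane cubic, taking care throughout to keep the relevant objects real. Since $Y$ is nonsingular, the projective cubic $C=\{Y=0\}$ is smooth, so over $\C$ it has exactly nine inflection points, and the twelve lines each meeting $C$ in three of them form the Hesse configuration. Equivalently, $Y$ and its Hessian determinant $\operatorname{Hess}(Y)=\det\big(\partial^2Y/\partial X^i\partial X^j\big)$ span the pencil $\mathcal P$ of cubics through those nine points --- a Hesse pencil --- whose four totally reducible members are the four ``Hesse triangles,'' three lines apiece, partitioning the twelve configuration lines. The idea is to single out a real Hesse triangle $T$; homogeneous coordinates $X^1,X^2,X^3$ dual to the three sides of $T$ (so that $\{X^k=0\}$ is the $k$th side) will be the basis we want.

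The key point, and the hard part, is the real descent. Complex conjugation acts on $\mathcal P$, on the nine inflection points, and on the four triangles. A smooth real cubic has exactly three real inflection points, and they are collinear, lying on a real line $\ell$; since the twelve configuration lines are partitioned among the four triangles, $\ell$ lies in a unique Hesse triangle, which is therefore conjugation-invariant but has $\ell$ as its only real side. One then checks --- from the inflection-point picture, or from the real classification of smooth plane cubics --- that among the remaining members exactly one Hesse triangle $T$ has all three sides real, so $T$, and hence the dual coordinate system, is intrinsically attached to $Y$. Moreover the nine inflection points lie three on each side of $T$ without repetition, so the three vertices of $T$ (the pairwise intersections of its sides) lie off $C$.

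Next I would show that in these coordinates $Y=a_1(X^1)^3+a_2(X^2)^3+a_3(X^3)^3+b\,X^1X^2X^3$ with all $a_k\neq 0$. Restricting $Y$ to a side, say $\{X^3=0\}$, gives a binary cubic in $X^1,X^2$ whose three zeros are the three inflection points of $C$ on that configuration line; a suitable translation by a $3$-torsion point of $C$ extends to one of the order-three projective transformations preserving $\mathcal P$, stabilizes every side of $T$, and acts on $\{X^3=0\}\cong\mathbb P^1$ as an order-three automorphism fixing precisely its two vertices, hence permutes those three zeros cyclically. A cyclically permuted triple on $\mathbb P^1$ relative to two fixed points consists of points with equal cubes, so the $(X^1)^2X^2$- and $X^1(X^2)^2$-coefficients of $Y|_{\{X^3=0\}}$ vanish; since the coefficient of $(X^i)^2X^j$ in $Y$ already occurs in $Y|_{\{X^k=0\}}$ for $k\notin\{i,j\}$, all six such coefficients of $Y$ vanish, giving the displayed form, with $a_k$ equal to the value of $Y$ at the $k$th vertex of $T$, which is nonzero because that vertex lies off $C$. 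Replacing $X^k$ by $a_k^{-1/3}X^k$ --- a real substitution, as every real number has a real cube root --- yields $Y=(X^1)^3+(X^2)^3+(X^3)^3+6\sigma\,X^1X^2X^3$ with $6\sigma=b\,(a_1a_2a_3)^{-1/3}$, and $\sigma\neq-\tfrac12$, since otherwise $Y=(X^1+X^2+X^3)(\text{quadratic})$ would be reducible, contradicting nonsingularity.

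Finally, for uniqueness, if $Y$ admits two such expressions then each coordinate triangle is an all-real Hesse triangle of $\mathcal P$, so the two coincide as unordered triples of lines by the descent step; the residual freedom --- permuting $X^1,X^2,X^3$ and rescaling $X^k\mapsto\mu_kX^k$ --- must preserve the stated shape, forcing $\mu_k^3=1$, hence $\mu_k=1$ over $\R$, so $\sigma$ is unchanged. I expect the real descent (proving that exactly one of the four Hesse triangles has all real sides --- equivalently, that every smooth real plane cubic is real-projectively equivalent to a member of the real Hesse pencil) to be the main obstacle; the remaining ingredients are the standard complex theory of the Hesse configuration together with the existence of real cube roots.
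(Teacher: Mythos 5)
The paper does not actually prove this lemma --- it is quoted as classical with a pointer to the literature --- so there is no in-paper argument to measure yours against; the only question is whether your Hesse-configuration proof is sound, and it essentially is. Your reduction is the standard one: once you have a Hesse triangle of the syzygetic pencil all three of whose sides are individually real, the translation-by-$3$-torsion argument correctly kills the six coefficients of $(X^i)^2X^j$ (the induced order-three automorphism of each side fixes its two vertices and cycles the three inflection points on it, so the restricted binary cubic is $u^3(X^i)^3-(X^j)^3$ up to scale), the vertices lie off the curve so the three pure-cube coefficients are nonzero, real cube roots normalize them to $1$, and $\sigma=-\tfrac{1}{2}$ is excluded because $(X^1)^3+(X^2)^3+(X^3)^3-3X^1X^2X^3$ factors. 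The uniqueness argument is also right: any coordinate triangle realizing the normal form consists of three real configuration lines forming a Hesse triangle, hence is the distinguished all-real one, and the residual freedom is a permutation composed with scalings $\mu_k$ with $\mu_k^3=1$, i.e.\ $\mu_k=1$ over $\R$, so $\sigma$ is determined.

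The one step you flag as the main obstacle --- that exactly one of the four Hesse triangles has all three sides real --- you assert rather than prove, but it does follow from the inputs you already invoke, so this is a soft spot rather than a gap. Taking a real inflection point as origin identifies the nine inflection points with $E[3]\cong(\Z/3)^2$ and the four triangles with the four order-$3$ subgroups, each triangle's sides being the three cosets of its subgroup. Complex conjugation is an involutive automorphism of $(\Z/3)^2$ whose fixed subgroup is the set of real inflections, of order $3$ by the classical three-real-collinear-inflections theorem; diagonalizing it as $\mathrm{diag}(1,-1)$, it fixes exactly two subgroups, $\langle(1,0)\rangle$ and $\langle(0,1)\rangle$, and swaps the other two. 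For $\langle(1,0)\rangle$ (the subgroup of real inflections) conjugation acts as $-1$ on the quotient, so only the coset $\ell$ itself gives a real side; for $\langle(0,1)\rangle$ it acts trivially on the quotient, so all three cosets, hence all three sides, are real. This gives existence and uniqueness of the all-real triangle in one stroke. With that filled in, the only ingredient you quote without proof is the genuinely classical fact that a smooth real plane cubic has exactly three real inflection points and that they are collinear, which is a fair thing to cite; granted that, your proof is complete and correct.
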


It follows that, if $\II_\mathbf{0}$ is general, then, by an action of the form \eqref{GL(n)-action}, we can arrange that
\begin{equation}
\II_{\mathbf{0}} = \text{span}\left(
\begin{bmatrix} 1 & 0 & 0 \\[0.05in] 0 & 0 & \sigma \\[0.05in] 0 & \sigma  & 0 \end{bmatrix},
\begin{bmatrix} 0 & 0 & \sigma \\[0.05in] 0 & 1 & 0 \\[0.05in] \sigma & 0 & 0 \end{bmatrix},
\begin{bmatrix} 0 & \sigma & 0 \\[0.05in] \sigma & 0 & 0 \\[0.05in] 0 & 0  & 1 \end{bmatrix}
\right).   \label{H-normal-form-3D}
\end{equation}
The annihilator equations \eqref{annihilator-eqns} then imply that we can choose
\begin{equation}
A^1 = \begin{bmatrix} -2\sigma & 0 & 0 \\[0.05in] 0 & 0 & 1\\[0.05in] 0 & 1  & 0 \end{bmatrix}, \qquad
A^2 = \begin{bmatrix} 0 & 0 & 1 \\[0.05in] 0 & -2\sigma & 0 \\[0.05in] 1 & 0 & 0 \end{bmatrix}, \qquad
A^3 = \begin{bmatrix} 0 & 1 & 0 \\[0.05in] 1 & 0 & 0 \\[0.05in] 0 & 0  & -2\sigma \end{bmatrix}
. \label{A-normal-form-3D}
\end{equation}

Meanwhile, the Riemann curvature tensor $R$ may be regarded as a quadratic form on the space $\Lambda^2(T_\mathbf{0}\R^3)$;
as such it is represented by the symmetric matrix:
\begin{equation}
\hat{R} = \begin{bmatrix} R_{2323} & R_{2331} & R_{1223} \\[0.05in] R_{2331} & R_{3131} & R_{3112} \\[0.05in] 
R_{1223} & R_{3112} & R_{1212} \end{bmatrix}. \label{define-R}
\end{equation}
The only invariant of $\hat{R}$ under the action \eqref{GL(n)-action} is its signature $(p,q)$.  
The following proposition is a direct consequence of Theorem F in \cite{BGY83}; we will give an independent proof below.

\begin{proposition}
If $\hat{R}$ is nonzero, then the Gauss equations \eqref{Gauss-eqns} have a solution $(H^4$, $H^5$, $H^6)$ 
whose span $\II_{\mathbf{0}}$ is equivalent under the action \eqref{GL(n)-action} to the normal form \eqref{H-normal-form-3D} for some $\sigma$ with $0 < |\sigma| < \tfrac{1}{2}$.
In fact, $\sigma$ may be chosen arbitrarily within this range, the only restrictions being that{\rm :}
\begin{itemize}
\item If the signature of $\hat{R}$ is $(1,0)$, then we must have $\sigma < 0${\rm ;}
\item If the signature of $\hat{R}$ is $(0,1)$, then we must have $\sigma > 0$.
\end{itemize}
\end{proposition}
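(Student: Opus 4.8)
The strategy is to recast the Gauss equations \eqref{Gauss-eqns} as an identity about adjugate matrices and then to analyze the image of an explicit linear map on the cone of positive-definite matrices.

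In dimension $3$ the Gauss equations \eqref{Gauss-eqns} are equivalent to
\[ \sum_{\alpha=4}^{6}\operatorname{adj}(H^\alpha)=\hat R, \]
where $\operatorname{adj}(H)$ is the adjugate (cofactor) matrix of $H$ and $\hat R$ is the symmetric matrix \eqref{define-R} representing $R$ as a quadratic form on $\Lambda^2(T_{\mathbf{0}}\R^3)$; indeed, each entry of $\operatorname{adj}(H^\alpha)$ is one of the $2\times2$ minors of $H^\alpha$ occurring in \eqref{Gauss-eqns}. Since the signature $(p,q)$ is, as recalled above, a complete invariant of $\hat R$ under the $GL(3,\R)$-action \eqref{GL(n)-action}, and since that action leaves the parameter $\sigma$ of \eqref{H-normal-form-3D} untouched, it suffices to prove the following: for every $(p,q)\ne(0,0)$ and every $\sigma$ with $0<|\sigma|<\tfrac12$ — excepting only that $(p,q)=(1,0)$ is permitted only when $\sigma<0$ and $(p,q)=(0,1)$ only when $\sigma>0$ — there is a positive-definite symmetric matrix $M$ such that $\sum_\alpha\operatorname{adj}(H^\alpha)$ has signature $(p,q)$, where $H^\alpha=\sum_\beta C_{\alpha\beta}H^\beta_\sigma$ is a basis of $\II_{\mathbf{0},\sigma}$ built from the three generators $H^4_\sigma,H^5_\sigma,H^6_\sigma$ of \eqref{H-normal-form-3D} by an invertible matrix $C$ with $M=C^{\sf T}C$. (One then applies the coordinate change of \eqref{GL(n)-action} carrying $\sum_\alpha\operatorname{adj}(H^\alpha)$ to $\hat R$; this carries the span to a $GL(3,\R)$-equivalent one, hence still equivalent to \eqref{H-normal-form-3D}.)

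Using bilinearity of the polarization of $\operatorname{adj}$, one finds $\sum_\alpha\operatorname{adj}(H^\alpha)=\Psi_\sigma(M)$ for a linear map $\Psi_\sigma\colon\mathcal S_3\to\mathcal S_3$, and a short computation in the basis of $\mathcal S_3$ adapted to the normal form shows $\Psi_\sigma$ to be block diagonal, assembled from three copies of a $2\times2$ block of determinant $\sigma(1-\sigma^3)\ne 0$. Thus $\Psi_\sigma$ is a linear isomorphism and $\mathcal C_\sigma:=\Psi_\sigma(\{M:M\text{ positive definite}\})$ is an open convex cone in $\mathcal S_3$; the problem becomes to decide which signature strata $\mathcal C_\sigma$ meets. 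I would restrict $M$ to the one-parameter family $M_m$ having $1$'s on the diagonal and $m$'s off it (positive definite exactly for $-\tfrac12<m<1$), for which $\Psi_\sigma(M_m)$ has a simple eigenvalue $m(1+2\sigma^2)-\sigma(\sigma+2)$ and a double eigenvalue $m(1-\sigma^2)+\sigma(1-\sigma)$, both strictly increasing in $m$. Letting $m$ run over $(-\tfrac12,1)$ exhibits positive-definite and negative-definite elements of $\mathcal C_\sigma$; by openness and convexity of $\mathcal C_\sigma$, a generic segment joining one of each passes through matrices of every signature with $p+q=3$ and of every rank-$2$ signature. At the value $m=-\sigma/(1+\sigma)\in(-\tfrac12,1)$ the double eigenvalue vanishes, producing a rank-one element of $\mathcal C_\sigma$ whose simple eigenvalue equals $-3\sigma(\sigma^2+\sigma+1)/(1+\sigma)$, positive exactly when $\sigma<0$; hence $\mathcal C_\sigma$ also contains a signature-$(1,0)$ matrix when $\sigma<0$ and a signature-$(0,1)$ matrix when $\sigma>0$. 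Combining these, $\mathcal C_\sigma$ meets every signature stratum save $(0,0)$, save $(1,0)$ when $\sigma>0$, and save $(0,1)$ when $\sigma<0$ — which is precisely the existence part of the proposition.

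To see that these two sign restrictions are genuinely necessary, I would write $\hat R=vv^{\sf T}$ and invert $\Psi_\sigma$ in closed form, getting $M=\Psi_\sigma^{-1}(vv^{\sf T})$ explicitly; a direct computation then shows $\det M$ to be $-\sigma$ times a manifestly nonnegative expression in $v$, so $M$ fails to be positive definite when $\sigma>0$, and dually $\Psi_\sigma^{-1}(-vv^{\sf T})$ fails to be positive definite when $\sigma<0$. I expect this determinant identity, along with the bookkeeping of the several small eigenvalue and positivity computations, to be the only genuine technical labor; the conceptual ingredients — the adjugate form of the Gauss equations and the convexity of $\Psi_\sigma(\{M:M\text{ positive definite}\})$ — are routine.
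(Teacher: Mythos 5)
Your proposal is correct, and at its computational core it is the same argument as the paper's: writing $H^\alpha=\gamma^\alpha_\beta\bar H^\beta$ and summing the Gauss equations \eqref{Gauss-eqns} is exactly the paper's computation of \eqref{R-matrix-example-1}, whose entries depend only on the Gram matrix $M=C^{\sf T}C$ of the columns $\gamma_\beta$ --- i.e.\ the paper's $\hat R$ is your $\Psi_\sigma(M)$ --- and your one-parameter family $M_m$ is precisely the paper's equiangular unit-vector family with $\phi=m$, with identical eigenvalue formulas. Where you genuinely differ is in how the remaining signatures are reached and in what you prove. First, your observation that $\Psi_\sigma$ is a linear isomorphism (the $2\times2$ blocks have determinant $\pm\sigma(1-\sigma^3)\neq0$), so that the realizable $\hat R$'s form an open convex cone, lets you get all full-rank and rank-two signatures from a generic segment joining a positive-definite to a negative-definite element; this is a cleaner and more rigorous substitute for the paper's appeal to ``a slight perturbation'' of the equiangular configuration. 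Second, you also address the \emph{necessity} of the sign restrictions for the rank-one signatures, which the paper's proof never establishes (it only proves existence, which is all that is used later). Your sketched determinant identity is in fact correct: using $m_2+2s=w^2$ with $w=v_1^3+v_2^3+v_3^3$, one finds
\[
\det\Psi_\sigma^{-1}(vv^{\sf T})\;=\;-\,\frac{\bigl[(2\sigma^3+1)\,v_1v_2v_3-\sigma^2\,(v_1^3+v_2^3+v_3^3)\bigr]^2}{\sigma^3(1-\sigma^3)^3},
\]
which is indeed $-\sigma$ times a manifestly nonnegative quantity, so no positive-definite $M$ maps to a signature-$(1,0)$ matrix when $\sigma>0$ (and dually for $(0,1)$ with $\sigma<0$). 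The only loose ends are presentational: the equivariance of the adjugate-sum form of \eqref{Gauss-eqns} under \eqref{GL(n)-action} (so that matching the signature of $\hat R$ really suffices) deserves one sentence, as does the genericity claim that a perturbed segment meets the singular hypersurface only at points where exactly one eigenvalue vanishes; both are routine and are glossed over in the paper as well.
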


\begin{proof}
Let $\bar{H}^4, \bar{H}^5, \bar{H}^6$ denote the basis
\begin{equation*}
 \bar{H}^4 = \begin{bmatrix} 1 & 0 & 0 \\[0.05in] 0 & 0 & \sigma \\[0.05in] 0 & \sigma  & 0 \end{bmatrix}, \qquad
\bar{H}^5 = \begin{bmatrix} 0 & 0 & \sigma \\[0.05in] 0 & 1 & 0 \\[0.05in] \sigma & 0 & 0 \end{bmatrix}, \qquad
\bar{H}^6 = \begin{bmatrix} 0 & \sigma & 0 \\[0.05in] \sigma & 0 & 0 \\[0.05in] 0 & 0  & 1 \end{bmatrix}
\end{equation*}
for $\II_\mathbf{0}$.  Then, for $\alpha=4,5,6$, let
\begin{equation*}
 H^\alpha = \gamma^\alpha_\beta \bar{H}^\beta
\end{equation*}
for some invertible matrix $[\gamma^\alpha_\beta]$.  Now, for $\beta=4,5,6$, let $\gamma_\beta$ denote the vector
\[ \gamma_\beta = \begin{bmatrix} \gamma^4_\beta \\[0.05in] \gamma^5_\beta \\[0.05in] \gamma^6_\beta \end{bmatrix}. \]
Then it follows from the Gauss equations \eqref{Gauss-eqns} that the corresponding matrix $\hat{R}$ is given by
\begin{equation} \label{R-matrix-example-1}
 \hat{R} = \begin{bmatrix}
(\gamma_5 \! \cdot \! \gamma_6) - \sigma^2 (\gamma_4 \! \cdot \! \gamma_4) \ & \
\sigma^2 (\gamma_4 \! \cdot \! \gamma_5) - \sigma (\gamma_6 \! \cdot \! \gamma_6) \ & \
\sigma^2 (\gamma_6 \! \cdot \! \gamma_4) - \sigma (\gamma_5 \! \cdot \! \gamma_5) \\[0.05in]
\sigma^2 (\gamma_4 \! \cdot \! \gamma_5) - \sigma (\gamma_6 \! \cdot \! \gamma_6) \ & 
    \ (\gamma_6 \! \cdot \! \gamma_4) - \sigma^2 (\gamma_5 \! \cdot \! \gamma_5) \ & \
\sigma^2 (\gamma_5 \! \cdot \! \gamma_6) - \sigma (\gamma_4 \! \cdot \! \gamma_4) \\[0.05in]
\sigma^2 (\gamma_6 \! \cdot \! \gamma_4) - \sigma (\gamma_5 \! \cdot \! \gamma_5) \ & \
\sigma^2 (\gamma_5 \! \cdot \! \gamma_6) - \sigma (\gamma_4 \! \cdot \! \gamma_4) \ & \
(\gamma_4 \! \cdot \! \gamma_5) - \sigma^2 (\gamma_6 \! \cdot \! \gamma_6)
\end{bmatrix} .
\end{equation}

It suffices to show by example that, with the sign restrictions given above, the vectors $\gamma_4, \gamma_5, \gamma_6$ may 
be chosen so as to obtain a matrix $\hat{R}$ of arbitrary nonzero signature.  
We may achieve this as follows: Let $\gamma_4, \gamma_5, \gamma_6$ be linearly independent unit vectors in $\R^3$, 
oriented so that the angle between any pair of these vectors is equal to the same real number $\theta$.  
Geometric constraints require that $0 < \theta < \frac{2\pi}{3}$, and hence $-\tfrac{1}{2} < \cos \theta < 1$.  
Denote $\cos \theta$ by $\phi$; then from \eqref{R-matrix-example-1}, we have
\begin{equation}\label{R-matrix-example-2}
\hat{R} = \begin{bmatrix}
\phi - \sigma^2 & \sigma^2 \phi - \sigma & \sigma^2 \phi  - \sigma \\[0.05in]
\sigma^2 \phi  - \sigma & \phi - \sigma^2 & \sigma^2 \phi  - \sigma \\[0.05in]
\sigma^2 \phi  - \sigma & \sigma^2 \phi  - \sigma & \phi - \sigma^2
\end{bmatrix}.
\end{equation}
The eigenvalues of the matrix \eqref{R-matrix-example-2} are
\[ \lambda = \phi(1 + 2\sigma^2) - \sigma(\sigma + 2), \ (1-\sigma)(\phi + \sigma + \sigma \phi), \ (1-\sigma)(\phi + \sigma + \sigma \phi). \]
Therefore, for $0 < \sigma < \frac{1}{2}$, we have
\[
\text{sgn}(\hat{R}) = \begin{cases}
(0,3), \ \  & -\frac{1}{2} < \phi < -\frac{\sigma}{\sigma+1}, \\
(0,1), & \phi = -\frac{\sigma}{\sigma+1}, \\
(2,1), & -\frac{\sigma}{\sigma+1} < \phi < \frac{\sigma(\sigma+2)}{1 + 2\sigma^2}, \\
(2,0), & \phi = \frac{\sigma(\sigma+2)}{1 + 2\sigma^2}, \\
(3,0), & \frac{\sigma(\sigma+2)}{1 + 2\sigma^2} < \phi < 1,
\end{cases}
\]
and for $-\frac{1}{2} < \sigma < 0$, we have
\[
\text{sgn}(\hat{R}) = \begin{cases}
(0,3), \ \  & -\frac{1}{2} < \phi < \frac{\sigma(\sigma+2)}{1 + 2\sigma^2} , \\
(0,2), & \phi = \frac{\sigma(\sigma+2)}{1 + 2\sigma^2}, \\
(1,2), & \frac{\sigma(\sigma+2)}{1 + 2\sigma^2} < \phi < -\frac{\sigma}{\sigma+1}, \\
(1,0), & \phi = -\frac{\sigma}{\sigma+1}, \\
(3,0), & -\frac{\sigma}{\sigma+1} < \phi < 1.
\end{cases}
\]
A slight perturbation of the vectors $\gamma_4, \gamma_5, \gamma_6$ will replace the double eigenvalue of $\hat{R}$ 
with distinct eigenvalues, which will lead to $\hat{R}$ attaining the remaining possible 
signatures ($(0,2)$, $(1,2)$, and $(1,1)$ for $\sigma>0$ and $(1,1)$, $(2,1)$, and $(2,0)$ for $\sigma<0$) as $\phi$ varies.
\end{proof}

\begin{remark}
It is possible to show that, when $\hat{R} = 0$, all nondegenerate solutions $(H^4$, $H^5$, $H^6)$ to 
the Gauss equations \eqref{Gauss-eqns} are simultaneously diagonalizable under the action \eqref{GL(n)-action} 
and are therefore equivalent to the normal form \eqref{H-normal-form-3D} with $\sigma=0$.  
The cubic form $A$ thus becomes reducible, with the result that the rank of equations \eqref{d-annihilator-eqns} 
with respect to the variables $h^\alpha_{ijk}$ drops from 27 to 21.  
This drop in rank is the main obstruction to carrying out our construction when $\hat{R}=0$.
\end{remark}

\section{Strong symmetric positivity for the system \eqref{equation:unsymmetrized-system}}\label{ssp-sec-2}

In this section we will prove the following theorem, thereby completing the proof of Theorem \ref{main-theorem-3D}.

\begin{theorem}\label{main-ssp-theorem}
Suppose that either $n=2$ and $K \neq 0$, or $n=3$ and $\hat{R} \neq 0$.
Then the linearized isometric embedding system \eqref{equation:unsymmetrized-system} can be transformed
to a strongly symmetric positive system in a neighborhood of $\bfx = \mathbf{0}$ via a change of variables of the form{\rm :}
\begin{equation}
 x^i = \bar{x}^i + \tfrac{1}{2} c^i_{jk} \bar{x}^j \bar{x}^k, \qquad \bar{\bfv} = (I + \bar{x}^i S_i) \bar{\bfw} , 
 \label{change-of-variables-form}
\end{equation}
where $c^i_{jk} = c^i_{kj} \in \R$ and $S_1, \ldots S_n$ are constant $n \times n$ matrices.
\end{theorem}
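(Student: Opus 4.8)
The plan is to reduce, via Proposition~\ref{key-ssp-prop}, to a purely algebraic statement about the $1$-jet of the coefficients at $\bfx=\mathbf{0}$. By the second assertion of that proposition it suffices to produce a change of variables of the form \eqref{change-of-variables-form} after which the system \eqref{equation:unsymmetrized-system} is strongly symmetric positive at $\bfx=\mathbf{0}$; the transformed system will be symmetric by construction, as explained below, and the positivity of the forms $Q_0$ and $Q_1$ in Definition~\ref{ssp-def} is an open condition on the $1$-jet of the coefficients, so strong symmetric positivity on a full neighborhood of $\bfx=\mathbf{0}$ follows automatically once it holds at the point.

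First I would record how a change of variables of the form \eqref{change-of-variables-form} acts on the coefficient matrices. Substituting $\bfx=\phi(\bar{\bfx})$ with $\phi^i(\bar{\bfx})=\bar{x}^i+\tfrac{1}{2} c^i_{jk}\bar{x}^j\bar{x}^k$ and $\bar{\bfv}=T\bar{\bfw}$ with $T=I+\bar{x}^iS_i$, and then left-multiplying the resulting system by $T^{\sf T}$ --- which preserves symmetry, since every $A^i$, and hence every scalar combination $\big((d\phi)^{-1}\big)^m_i A^i$, is symmetric --- yields a symmetric system with coefficients $\tilde{A}^i,\tilde{B}$. Using $d\phi(\mathbf{0})=I$ and $T(\mathbf{0})=I$, a direct computation gives
\[
\tilde{A}^i(\mathbf{0})=A^i(\mathbf{0}),\qquad \partial_j\tilde{A}^i(\mathbf{0})=\partial_j A^i(\mathbf{0})+S_j^{\sf T}A^i(\mathbf{0})+A^i(\mathbf{0})S_j-c^i_{jk}A^k(\mathbf{0}),
\]
so the normal forms \eqref{A-normal-form-2D}, \eqref{A-normal-form-3D} are unaffected at $\bfx=\mathbf{0}$, while the derivatives $\partial_j A^i(\mathbf{0})$ are shifted by quantities under our control through $c$ and $S$. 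Since $\bfx$ is a normal coordinate system, $B(\mathbf{0})=0$ by \eqref{define-matrices}, so $\tilde{B}(\mathbf{0})=A^m(\mathbf{0})S_m$; on substituting into \eqref{define-Q0} the $S$-dependent terms cancel, and the matrix of $\tilde{Q}_0(\mathbf{0})$ equals
\[
-\sum_{m=1}^n\partial_m A^m(\mathbf{0})+\sum_{i=1}^n\tau_i A^i(\mathbf{0}),\qquad \tau_i:=\sum_{m=1}^n c^m_{im}.
\]
Hence both $c$ and $S$ may be used to adjust $\tilde{Q}_1(\mathbf{0})$, whereas only the traces $\tau_i$ of $c$ reach $\tilde{Q}_0(\mathbf{0})$.

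With these formulas in hand the strategy is as follows. The nonvanishing of the curvature enters first through the propositions of the preceding section: when $\hat{R}\neq0$ (resp.\ $K\neq0$) the embedding $\bfy_0$ may be chosen nondegenerate with $A^i(\mathbf{0})$ in the normal form \eqref{A-normal-form-3D}, $0<|\sigma|<\tfrac{1}{2}$, the sign of $\sigma$ determined by the signature of $\hat{R}$ (resp.\ in the normal form \eqref{A-normal-form-2D}). The remaining ingredient is the freedom that survives the compatibility equations: the numbers $\partial_\ell A^k(\mathbf{0})$ are constrained through the differentiated annihilator equations \eqref{d-annihilator-eqns} by the Codazzi jet $h^\alpha_{ijk}$, which is in turn constrained through the differentiated Gauss equations \eqref{d-Gauss-eqns} by the derivatives of the Riemann tensor, but once these relations are accounted for there remains genuine freedom both in the $1$-jet of $\bfy_0$ and in the choice of the smoothly varying annihilator basis $A^1,\dots,A^n$. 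Combining that freedom with the parameters $c^i_{jk}$ and $S_j$, I would then exhibit --- by separate explicit computations for $n=2$ and for $n=3$ --- a choice for which both $\tilde{Q}_1(\mathbf{0})$ and $\tilde{Q}_0(\mathbf{0})$ are positive definite.

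The main obstacle is precisely this simultaneous positivity. Here the nonvanishing of the curvature is indispensable: in the normal form the curvature invariant ($K$ when $n=2$, $\sigma$ when $n=3$) enters multiplicatively among the coefficients of the compatibility equations, and this is what lets those equations be solved for the entries of $\partial_\ell A^k(\mathbf{0})$ appearing in $\tilde{Q}_0(\mathbf{0})$ and $\tilde{Q}_1(\mathbf{0})$; when the curvature vanishes one has $\sigma=0$, the cubic form $A$ becomes reducible, the rank of \eqref{d-annihilator-eqns} drops, and the construction breaks down --- the obstruction already noted in the remark above concerning the case $\hat{R}=0$. The case $n=3$, where the $\sigma$-dependent normal form \eqref{A-normal-form-3D} must be carried through all of \eqref{Gauss-eqns}--\eqref{d-annihilator-eqns}, requires substantially heavier bookkeeping than $n=2$ and is the technical core of the argument.
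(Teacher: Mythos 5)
Your setup is correct and coincides with the paper's: the reduction via Proposition \ref{key-ssp-prop} to an algebraic statement about the $1$-jet at $\bfx=\mathbf{0}$, the transformation formulas for $\partial_j\tilde{A}^i(\mathbf{0})$ and $\tilde{Q}_0(\mathbf{0})$ (which agree with Lemma \ref{Q0-Q1-transformation-lemma}, including the cancellation of the $S$-terms in $\tilde{Q}_0$), and the observation that only the traces of $c$ reach $\tilde{Q}_0$ while both $c$ and $S$ reach $\tilde{Q}_1$. But from that point on the proposal is a plan rather than a proof, and the omitted steps are exactly where the argument could fail. Three things are asserted without verification. First, that the parameters $S_j$ and the non-trace part of $c$ suffice to normalize $\tilde{Q}_1(\mathbf{0})$ to $\mu I$ \emph{for arbitrary} values of the derivatives $a^k_\ell=\partial_\ell A^k(\mathbf{0})$; this requires checking that the relevant linear system in the $s^{ij}_k$ and $c^i_{jk}$ is solvable, which uses $K\neq 0$ (for $n=2$) and $0<|\sigma|<\tfrac12$ (for $n=3$) in an essential way. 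Second, that after $\tilde{Q}_1$ is normalized, $\tilde{Q}_0(\mathbf{0})=\lambda I$ can be achieved; this is possible if and only if $\sum_i a^i_i+\lambda I$ lies in the span of $A^1,\dots,A^n$, a nontrivial trace constraint on the $a^{kij}_\ell$ (equations \eqref{aijkl-condition-2D}, \eqref{aijkl-condition-3D}).

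Third, and most seriously, you never confront whether this trace constraint is compatible with the compatibility equations. Via the differentiated annihilator equations \eqref{d-annihilator-eqns} the constraint on $a$ becomes an affine condition on the Codazzi jet $h^\alpha_{ijk}$ (equations \eqref{h-condition-2D}, \eqref{h-condition-3D}), and one must then show that the differentiated Gauss equations \eqref{d-Gauss-eqns}, restricted to that affine subspace, still realize \emph{every} admissible value of $\partial_m R_{ijk\ell}(\mathbf{0})$. For $n=2$ this is a two-equation check, but for $n=3$ it is the surjectivity of a $15\times 27$ matrix onto the $15$-dimensional space of curvature derivatives, which the paper establishes by exhibiting a $15\times 15$ minor with determinant $-\tfrac{64}{27}\sigma^3(\sigma-1)^3(\sigma^2+\sigma+1)^2$ --- nonzero precisely because $\sigma\neq 0$. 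The phrase ``once these relations are accounted for there remains genuine freedom'' is exactly the claim that needs proof; nothing in your write-up establishes it, and the remark about the rank drop when $\hat{R}=0$ shows it is not automatic. Until the solvability of the $(s,c)$ system, the derivation of the trace constraint, and the surjectivity of the restricted differentiated-Gauss map are carried out explicitly, the proof is incomplete.
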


In order to prove Theorem \ref{main-ssp-theorem}, we will show that, when $n=2$ or $n=3$, 
for any given real numbers $R_{ijk\ell}$ and $r_{ijk\ell, m}$ satisfying the necessary symmetries 
with $R_{ijk\ell}$ not all equal to zero, there exist real 
numbers $H^\alpha_{ij}, h^\alpha_{ijk}, A^{kij}, a^{kij}_\ell$ that satisfy 
equations \eqref{Gauss-eqns}--\eqref{d-annihilator-eqns}, 
together with a change of variables of the form \eqref {change-of-variables-form}, 
that renders the system \eqref{equation:unsymmetrized-system} strongly symmetric positive at $\bfx=\mathbf{0}$.

At first glance, the strong symmetric positivity condition might appear impossible to achieve: 
From the expressions \eqref{define-matrices} and the normal coordinates condition $\Gamma^k_{ij}(\mathbf{0}) = 0$, 
we have $B(\mathbf{0}) = 0$.  Therefore, symmetric positivity for the system \eqref{equation:unsymmetrized-system} would require that the matrix
\[ \bar{Q}_0 = - \sum_{i=1}^n a^i_i   \]
be positive definite, while strong symmetric positivity would require that each of the diagonal
sub-blocks $(\bar{Q}_1)_{ii} = 2 a^i_i$ (no sum on $i$) of $\bar{Q}_1$ (cf. equation \eqref{define-Q1-matrix}) be positive definite.
Clearly, these two conditions are mutually exclusive, and the situation appears hopeless.  
However, it turns out that a change of variables provides some unexpected flexibility:

\begin{lemma}\label{Q0-Q1-transformation-lemma}
Under the change of variables \eqref{change-of-variables-form}, 
the symmetric linear system \eqref{equation:unsymmetrized-system} with associated quadratic 
forms $\bar{Q}_0$ and $\bar{Q}_1$ at $\bfx = \mathbf{0}$ is transformed to a symmetric system
\begin{equation}
\tilde{A}^i \partial_i \bar{\bfw} + \tilde{B}\bar{\bfw} = \tilde{\bfh}, \label{transformed-system}
\end{equation}
with associated quadratic form $\tilde{\bar{Q}}_0$ at $\bfx=\mathbf{0}$ given by
\begin{equation}
\tilde{\bar{Q}}_0 = - a^i_i + c^i_{ij} A^j, \label{new-Q0}
\end{equation}
and the $(i,j)$th block of $\tilde{\bar{Q}}_1$ {\rm (}cf. equation \eqref{define-Q1-matrix}{\rm )} at $\bfx=\mathbf{0}$ given by
\begin{equation}
\begin{aligned}
(\tilde{\bar{Q}}_1)_{ij} & = \partial_i \tilde{A}^j(\mathbf{0}) + \partial_j \tilde{A}^i(\mathbf{0}) \\
& = a^i_j + a^j_i - (c^i_{jk} + c^j_{ik}) A^k + S_i^{\sf T} A^j + A^j S_i + S_j^{\sf T} A^i + A^i S_j .
\end{aligned} \label{new-Q1-blocks}
\end{equation}
\end{lemma}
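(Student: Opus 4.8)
The plan is to carry out the change of variables \eqref{change-of-variables-form} in two stages, tracking only the values and first derivatives at $\bfx=\mathbf{0}$ of the coefficient matrices, since these are all that enter $\bar Q_0$ and $\bar Q_1$. Observe that \eqref{change-of-variables-form} is an instance of the general form \eqref{general-change-of-vars}: its independent-variable part is the local diffeomorphism $\bfx = \phi(\bar\bfx)$, $\phi^i(\bar\bfx) = \bar x^i + \tfrac12 c^i_{jk}\bar x^j\bar x^k$, which fixes $\mathbf{0}$, and its dependent-variable part is the linear substitution $\bar\bfv = M(\bar\bfx)\bar\bfw$ with $M(\bar\bfx) = I + \bar x^i S_i$ and $M(\mathbf{0})=I$.

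\emph{First stage.} First I would invert $\phi$: writing $\psi=\phi^{-1}$, comparison of $2$-jets gives $\partial_{x^i}\psi^a(\mathbf{0})=\delta^a_i$ and $\partial^2_{x^i x^j}\psi^a(\mathbf{0})=-c^a_{ij}$. Rewriting the operator in the $\bar\bfx$-coordinates turns $A^i(\bfx)\partial_i$ into $\check A^a(\bar\bfx)\,\partial_{\bar x^a}$, where $\check A^a = (\partial_{x^i}\psi^a\circ\phi)\,(A^i\circ\phi)$ (summed on $i$), and turns the zeroth-order term into $\check B = B\circ\phi$. Since the Jacobian factors are scalars and the $A^i$ are symmetric (here $A^{kij}$ is assumed fully symmetric, as in the preceding sections), each $\check A^a$ is again symmetric. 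Evaluating at the origin gives $\check A^a(\mathbf{0})=A^a$; one differentiation, using the $2$-jet of $\psi$ and the identity $\partial_b A^i(\mathbf{0})=a^i_b$ (which again uses the full symmetry of $A^{kij}$), gives $\partial_b\check A^a(\mathbf{0})=a^a_b-c^a_{bk}A^k$; and $\check B(\mathbf{0})=B(\mathbf{0})=0$ by the normal-coordinates hypothesis (cf. \eqref{define-matrices}).

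\emph{Second stage and assembly.} Substituting $\bar\bfv=M\bar\bfw$ into $\check A^a\partial_a\bar\bfv+\check B\bar\bfv=\check\bfh$ and multiplying on the left by $M^{\sf T}$ — the multiplier that preserves the symmetric structure — produces the symmetric system \eqref{transformed-system} with $\tilde A^a=M^{\sf T}\check A^a M$ and $\tilde B=M^{\sf T}(\check A^a\partial_a M+\check B M)$. From $M(\mathbf{0})=I$, $\partial_b M(\mathbf{0})=S_b$, $\check B(\mathbf{0})=0$ and the first stage, one reads off $\tilde B(\mathbf{0})=A^aS_a$ and $\partial_b\tilde A^a(\mathbf{0})=S_b^{\sf T}A^a+A^aS_b+a^a_b-c^a_{bk}A^k$. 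Feeding these into $\tilde{\bar Q}_0=\tilde B(\mathbf{0})+\tilde B^{\sf T}(\mathbf{0})-\sum_a\partial_a\tilde A^a(\mathbf{0})$, the terms $S_a^{\sf T}A^a+A^aS_a$ cancel and \eqref{new-Q0} drops out; feeding them into $(\tilde{\bar Q}_1)_{ij}=\partial_i\tilde A^j(\mathbf{0})+\partial_j\tilde A^i(\mathbf{0})$ (cf. \eqref{define-Q1-matrix}) yields \eqref{new-Q1-blocks} at once.

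The computation is entirely routine; the only thing requiring real care is the index bookkeeping — in particular keeping straight the various symmetrizations of $A^{kij}$, using that the Jacobian factors are scalar, and verifying the cancellation of the $S$-dependent terms in $\tilde{\bar Q}_0$. That cancellation is less an obstacle than the point of the lemma: the matrices $S_i$ leave $\bar Q_0$ untouched at $\bfx=\mathbf{0}$ but do deform the off-diagonal blocks of $\bar Q_1$, and it is exactly this asymmetry between the two quadratic forms that will be exploited in the rest of the section to render \eqref{equation:unsymmetrized-system} strongly symmetric positive.
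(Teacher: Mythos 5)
Your proof is correct and follows essentially the same route as the paper's: both are the chain-rule computation of the 1-jets of $\tilde{A}^i$ and $\tilde{B}$ at the origin after conjugating by $I+\bar{x}^\ell S_\ell$, with the paper performing the substitution in a single step (arriving at \eqref{transformed-matrices}) while you factor it into the coordinate change followed by the dependent-variable change. The values you obtain, $\partial_b\tilde{A}^a(\mathbf{0}) = S_b^{\sf T}A^a + A^aS_b + a^a_b - c^a_{bk}A^k$ and $\tilde{B}(\mathbf{0}) = A^aS_a$ (using $B(\mathbf{0})=0$), agree with the paper's and yield \eqref{new-Q0} and \eqref{new-Q1-blocks} exactly as you describe.
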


\begin{proof}
According to the chain rule, up to first order at $\bfx=\mathbf{0}$, we have
\[ 
\frac{\partial}{\partial \bar{x}^i} = \frac{\partial}{\partial x^i} + c^j_{ik} \bar{x}^k \frac{\partial}{\partial x^j}, \qquad
\frac{\partial}{\partial x^i} = \frac{\partial}{\partial \bar{x}^i} - c^j_{ik} \bar{x}^k \frac{\partial}{\partial \bar{x}^j}.
\]
Therefore, at $\bfx = \mathbf{0}$, we have
\begin{equation}\label{dv-to-dw}
\begin{aligned}
\frac{\partial}{\partial x^i}\bar{\bfv} 
& = \left( \frac{\partial}{\partial \bar{x}^i} - c^j_{ik} \bar{x}^k \frac{\partial}{\partial \bar{x}^j} \right) \left((I + \bar{x}^\ell S_\ell) \bar{\bfw} \right) \\
& = (I + \bar{x}^\ell S_\ell) \left( \frac{\partial}{\partial \bar{x}^i}\bar{\bfw} - c^j_{ik} \bar{x}^k 
\frac{\partial}{\partial \bar{x}^j}\bar{\bfw} \right) + (S_i - c^j_{ik}\bar{x}^k S_j) \bar{\bfw}.
\end{aligned}
\end{equation}
Substitution of \eqref{dv-to-dw} and \eqref{change-of-variables-form} into the linear system \eqref{equation:unsymmetrized-system} yields
\[ 
A^i \left( (I + \bar{x}^\ell S_\ell) \left( \frac{\partial}{\partial \bar{x}^i}\bar{\bfw} 
- c^j_{ik} \bar{x}^k \frac{\partial}{\partial \bar{x}^j}\bar{\bfw} \right) \right) 
+ \left( B (I + \bar{x}^\ell S_\ell) + A^i (S_i - c^j_{ik}\bar{x}^k S_j) \right) \bar{\bfw} = \bfh.
\]
Multiply on the left by $(I + \bar{x}^\ell S_\ell)^{\sf T}$, collect the terms and then relabel them to obtain the system \eqref{transformed-system}, where
\begin{equation} \label{transformed-matrices}
\begin{aligned}
\tilde{A}^i & = (I + \bar{x}^\ell S_\ell)^{\sf T} \left( A^i - c^i_{jk} \bar{x}^k A^j \right) (I + \bar{x}^\ell S_\ell), \\
\tilde{B} & = (I + \bar{x}^\ell S_\ell)^{\sf T}\left( B (I + \bar{x}^\ell S_\ell) + A^i (S_i - c^j_{ik}\bar{x}^k S_j) \right), \\
\tilde{\bfh} & = (I + \bar{x}^\ell S_\ell)^{\sf T}\bfh,
\end{aligned}
\end{equation}
and $\partial_i$ now represents $\frac{\partial}{\partial \bar{x}^i}$.  Finally, computation of
\[ 
\tilde{Q}_0 = \tilde{B} + \tilde{B}^{\sf T} -\sum_{i=1}^n \partial_i \tilde{A}^i, \qquad (\tilde{Q}_1)_{ij} = \partial_i \tilde{A}^j + \partial_j \tilde{A}^i, 
\]
and evaluating at $\bfx = \mathbf{0}$ yields equations \eqref{new-Q0} and \eqref{new-Q1-blocks}.
\end{proof}

In light of Lemma \ref{Q0-Q1-transformation-lemma}, our strategy for proving Theorem \ref{main-ssp-theorem} will be as follows:
\begin{enumerate}
\item[1.] By applying the $GL(n,\R)$ action \eqref{GL(n)-action}, we may assume that $A^{kij}$ and $H^\alpha_{ij}$ 
are as in equations \eqref{H-normal-form-2D}--\eqref{A-normal-form-2D} when $n=2$ and as in equations \eqref{H-normal-form-3D}--\eqref{A-normal-form-3D} when $n=3$.

\item[2.] Identify the values of $a^{kij}_\ell$ for which the system \eqref{equation:unsymmetrized-system} can be transformed to a strongly symmetric 
positive system \eqref{transformed-system} via a change of variables of the form \eqref{change-of-variables-form}.

\item[3.] Identify the values of $h^\alpha_{ijk}$ that satisfy equations \eqref{d-annihilator-eqns} for some $a^{kij}_\ell$ from Step 2.

\item[4.] Show that all possible values of $r_{ijk\ell,m}$ satisfy equations \eqref{d-Gauss-eqns} for some $h^\alpha_{ijk}$ from Step 3.

\item[5.] Conclude that, for any $R_{ijk\ell}$ not all equal to zero and any $r_{ijk\ell,m}$,
there exist $H^\alpha_{ij}$, $A^{kij}$, $h^\alpha_{ijk}$, and $a^{kij}_\ell$ that satisfy equations \eqref{Gauss-eqns}--\eqref{d-annihilator-eqns} 
and for which the system \eqref{equation:unsymmetrized-system} can be transformed to a strongly symmetric positive system \eqref{transformed-system}.
\end{enumerate}

\noindent {\em Proof of Theorem {\rm \ref{main-ssp-theorem}}.}

First we give the proof for the case $n=2$.  We begin by identifying the values of $a^{kij}_\ell$ for which we can arrange that
\begin{equation}
 \tilde{\bar{Q}}_0 = \lambda I_2, \qquad \tilde{\bar{Q}}_1 =  \mu I_4, \label{optimal-Qs-2d}
\end{equation}
for given real numbers $\lambda, \mu >0$, where $I_2$ and $I_4$ denote the $2\times 2$ and $4\times 4$ identity matrices, respectively.

By applying the $GL(2)$ action \eqref{GL(n)-action}, we can assume that
\[ A^1 = \begin{bmatrix} 0 & 1 \\[0.1in] 1 & 0 \end{bmatrix}, \qquad
A^2 = \begin{bmatrix} 1 & 0 \\[0.1in] 0 & -K \end{bmatrix}, \qquad
H^3 = \begin{bmatrix}  K & 0 \\[0.1in] 0 & 1
\end{bmatrix}. \]
Set
\[ D = \begin{bmatrix} 0 & 0 \\[0.1in] 0 & 1 \end{bmatrix}, \]
so that the matrices $A^1, A^2$, and $D$ form a basis for $\mathcal{S}_2$,
and write the matrices $S_1$ and $S_2$ as
\[ S_1 = \begin{bmatrix} s^{11}_1 & s^{12}_1 \\[0.1in] s^{21}_1 & s^{22}_1 \end{bmatrix}, \qquad S_2 
= \begin{bmatrix} s^{11}_2 & s^{12}_2 \\[0.1in] s^{21}_2 & s^{22}_2 \end{bmatrix}. \]
Then, after some computation, equation \eqref{new-Q0} can be written as
\begin{equation}\label{new-Q0-details-2D}
\tilde{\bar{Q}}_0 = -(a^1_1 + a^2_2) + (c^1_{11} + c^2_{12}) A^1 + (c^1_{12} + c^2_{22}) A^2,
\end{equation}
and the equations \eqref{new-Q1-blocks} can be written as
\begin{equation} \label{new-Qij-details-2D}
\begin{aligned}
(\tilde{\bar{Q}}_1)_{11} & = 2 a^1_1 - 2(c^1_{11} - s^{11}_1 - s^{22}_1) A^1 - 2(c^1_{12} + Z_{112}(s)) A^2 + 4(s^{12}_1 + K s^{21}_1) D ,\\
(\tilde{\bar{Q}}_1)_{22} & = 2 a^2_2 - 2(c^2_{12} + Z_{221}(s)) A^1 - 2(c^2_{22} - 2s^{11}_2) A^2 + 4K(s^{11}_2 - s^{22}_2) D, \\
(\tilde{\bar{Q}}_1)_{12} & = a^1_2 + a^2_1 - (c^1_{12} + c^2_{11} + Z_{121}(s)) A^1 - (c^1_{22} + c^2_{12} + Z_{122}(s)) A^2 \\
& \qquad   + 2(s^{12}_2 + K(s^{11}_1 + s^{21}_2 - s^{22}_1)) D,
\end{aligned}
\end{equation}
where $Z_{ijk}(s)$ represents a linear combination of the $s^{ij}_k$ whose precise form is irrelevant.  
Regardless of the values of $a^{kij}_\ell$, we can set
\[ 
(\tilde{\bar{Q}}_1)_{ij} = \delta_{ij} \mu I_2 
\]
and solve equations \eqref{new-Qij-details-2D} for the variables $s^{12}_1, s^{22}_2, s^{12}_2$ (from the coefficients of $D$)
and $s^{22}_1,  s^{11}_2, c^1_{12}, c^2_{12}, c^2_{11}, c^1_{22}$ (from the coefficients of $A^1$ and $A^2$).  
Note that this solution makes use of the assumption that $K \neq 0$.
Then we can set
\[ 
\tilde{\bar{Q}}_0 = \lambda I_2 
\]
and solve equations \eqref{new-Q0-details-2D} for the variables $c^1_{11}$ and $c^2_{22}$ if and only if the matrix
\[ 
a^1_1 + a^2_2 + \lambda I_2  = \begin{bmatrix} a^{111}_1 + a^{112}_2 + \lambda & a^{112}_1 + a^{122}_2 \\[0.1in] 
a^{112}_1 + a^{122}_2 & a^{122}_1 + a^{222}_2 + \lambda \end{bmatrix}
 \]
is a linear combination of $A^1$ and $A^2$, which in turn is true if and only if
\begin{equation}\label{aijkl-condition-2D}
 (a^{122}_1 + a^{222}_2 + \lambda) + K(a^{111}_1 + a^{112}_2 + \lambda) = 0.
\end{equation}
Thus, the strong symmetric positivity condition \eqref{optimal-Qs-2d} can be realized 
if and only if the $a^{kij}_\ell$ satisfy equation \eqref{aijkl-condition-2D}.

The next step is to identify the values of $h^3_{ijk}$ that satisfy equations \eqref{d-annihilator-eqns} for some $a^{kij}_\ell$ 
satisfying equation \eqref{aijkl-condition-2D}.  Equations \eqref{d-annihilator-eqns} may be written in matrix form as
\begin{equation}
 \langle A^k, h^3_\ell \rangle + \langle H^3, a^k_\ell \rangle = 0. \label{d-annihilator-eqns-matrixform-2D}
\end{equation}
The condition \eqref{aijkl-condition-2D} is equivalent to
\[ \langle H^3, a^1_1 + a^2_2 \rangle =  -(K+1)\lambda; \]
therefore, \eqref{d-annihilator-eqns-matrixform-2D} implies that we must have
\begin{equation*}
 \langle A^1, h^3_1 \rangle + \langle A^2, h^3_2 \rangle = -\langle H^3, a^1_1 + a^2_2 \rangle = (K+1)\lambda,
\end{equation*}
or, equivalently,
\begin{equation}
3 h^3_{112} - K h^3_{222} = (K+1)\lambda.  \label{h-condition-2D}
\end{equation}
Conversely, for any values of $h^3_{ijk}$ that satisfy the condition \eqref{h-condition-2D}, 
there exist values of $a^{kij}_\ell$ that satisfy the condition \eqref{aijkl-condition-2D}.

Finally, consider equations \eqref{d-Gauss-eqns}, which can be written as
\begin{equation}\label{d-Gauss-eqns-details-2D}
\begin{aligned}
K h^3_{122} + h^3_{111} & = k_1, \\
K h^3_{222} + h^3_{112} & = k_2.
\end{aligned}
\end{equation}
The values of $h^3_{ijk}$ may be chosen arbitrarily, subject only to the condition \eqref{h-condition-2D}; 
therefore, any given values of $k_1$ and $k_2$ may be realized by an appropriate choice of $h^3_{ijk}$.

We conclude that, for any $K \neq 0$ and any $k_1, k_2$, there exist solutions $h^3_{ijk}$ and $a^{kij}_\ell$ to 
equations \eqref{Gauss-eqns}--\eqref{d-annihilator-eqns} that satisfy the conditions \eqref{aijkl-condition-2D} and \eqref{h-condition-2D}, 
and hence the linearized system \eqref{equation:unsymmetrized-system} can be transformed to a strongly symmetric positive system via a change of variables 
of the form \eqref{change-of-variables-form}.  This completes the proof for $n=2$.

Now consider the case $n=3$.  The argument is essentially the same as for $n=2$, but the linear algebra requires a bit more effort.  
We begin by identifying the values of $a^{kij}_\ell$ for which we can arrange that
\begin{equation}
 \tilde{\bar{Q}}_0 = \lambda I_3, \qquad \tilde{\bar{Q}}_1 =  \mu I_9, \label{optimal-Qs-3d}
\end{equation}
for given real numbers $\lambda, \mu >0$, where $I_3$ and $I_9$ denote the $3\times 3$ and $9\times 9$ identity matrices, respectively.

By applying the $GL(3)$ action \eqref{GL(n)-action}, we can assume that
\[ A^1 = \begin{bmatrix} -2\sigma & 0 & 0 \\[0.05in] 0 & 0 & 1\\[0.05in] 0 & 1  & 0 \end{bmatrix}, \qquad
A^2 = \begin{bmatrix} 0 & 0 & 1 \\[0.05in] 0 & -2\sigma & 0 \\[0.05in] 1 & 0 & 0 \end{bmatrix}, \qquad
A^3 = \begin{bmatrix} 0 & 1 & 0 \\[0.05in] 1 & 0 & 0 \\[0.05in] 0 & 0  & -2\sigma \end{bmatrix} \]
with $0 < |\sigma| < \tfrac{1}{2}$.
Let $\bar{H}^4, \bar{H}^5, \bar{H}^6$ denote the basis
\begin{equation}\label{Hbar-basis}
 \bar{H}^4 = \begin{bmatrix} 1 & 0 & 0 \\[0.05in] 0 & 0 & \sigma \\[0.05in] 0 & \sigma  & 0 \end{bmatrix}, \qquad
\bar{H}^5 = \begin{bmatrix} 0 & 0 & \sigma \\[0.05in] 0 & 1 & 0 \\[0.05in] \sigma & 0 & 0 \end{bmatrix}, \qquad
\bar{H}^6 = \begin{bmatrix} 0 & \sigma & 0 \\[0.05in] \sigma & 0 & 0 \\[0.05in] 0 & 0  & 1 \end{bmatrix}
\end{equation}
for $\II_\mathbf{0}$.  Then, for $\alpha=4,5,6$, we can write
\begin{equation} \label{fancy-H}
 H^\alpha = \gamma^\alpha_\beta \bar{H}^\beta
\end{equation}
for some invertible matrix $[\gamma^\alpha_\beta]$.
Set
\[ 
D_1 = \begin{bmatrix} 1 & 0 & 0 \\[0.1in] 0 & 0 & 0 \\[0.1in] 0 & 0 & 0 \end{bmatrix}, \qquad
D_2 = \begin{bmatrix} 0 & 0 & 0 \\[0.1in] 0 & 1 & 0 \\[0.1in] 0 & 0 & 0 \end{bmatrix}, \qquad
D_3 = \begin{bmatrix} 0 & 0 & 0 \\[0.1in] 0 & 0 & 0 \\[0.1in] 0 & 0 & 1 \end{bmatrix},
\]
so that the matrices $A^1, A^2, A^3, D_1, D_2, D_3$ form a basis for $\mathcal{S}_3$,
and write the matrices $S_1, S_2, S_3$ as
\[ 
S_1 = \begin{bmatrix} s^{11}_1 & s^{12}_1 & s^{13}_1 \\[0.1in] s^{21}_1 & s^{22}_1 & s^{23}_1 \\[0.1in] s^{31}_1 & s^{32}_1 & s^{33}_1 \end{bmatrix}, \qquad
S_2 = \begin{bmatrix} s^{11}_2 & s^{12}_2 & s^{13}_2 \\[0.1in] s^{21}_2 & s^{22}_2 & s^{23}_2 \\[0.1in] s^{31}_2 & s^{32}_2 & s^{33}_2 \end{bmatrix}, \qquad
S_3 = \begin{bmatrix} s^{11}_3 & s^{12}_3 & s^{13}_3 \\[0.1in] s^{21}_3 & s^{22}_3 & s^{23}_3 \\[0.1in] s^{31}_3 & s^{32}_3 & s^{33}_3 \end{bmatrix}. 
\]
Then, after some computation, equation \eqref{new-Q0} can be written as
\begin{equation}\label{new-Q0-details-3D}
\tilde{\bar{Q}}_0 = -(a^1_1 + a^2_2 + a^3_3) + (c^1_{11} + c^2_{12} + c^3_{13}) A^1 + (c^1_{12} + c^2_{22} + c^3_{23}) A^2 + (c^1_{13} + c^2_{23} + c^3_{33}) A^3,
\end{equation}
and the equations \eqref{new-Q1-blocks} can be written as
\begin{equation} \label{new-Qij-details-3D}
\begin{aligned}
(\tilde{\bar{Q}}_1)_{11} & = 2 a^1_1 - 2(c^1_{11} - s^{22}_1 - s^{33}_1) A^1 - 2(c^1_{12} + Z_{112}(s)) A^2 - 2(c^1_{13} + Z_{113}(s)) A^3 \\
& \   + 4\sigma(s^{22}_1 + s^{33}_1 - 2 s^{11}_1) D_1 + 4(s^{32}_1 + \sigma s^{21}_1 - 2\sigma^2 s^{13}_1) D_2 + 4(s^{23}_1 + \sigma s^{31}_1 - 2\sigma^2 s^{12}_1) D_3, \\
(\tilde{\bar{Q}}_1)_{22} & = 2 a^2_2 - 2(c^2_{12} + Z_{221}(s)) A^1 - 2(c^2_{22} - s^{11}_2 - s^{33}_2) A^2 - 2(c^2_{23} + Z_{223}(s)) A^3 \\
& \   + 4(s^{31}_2 + \sigma s^{12}_2 - 2\sigma^2 s^{23}_2) D_1 + 4\sigma(s^{11}_2 + s^{33}_2 - 2 s^{22}_2) D_2 + 4(s^{13}_2 + \sigma s^{32}_2 - 2\sigma^2 s^{21}_2) D_3, \\
(\tilde{\bar{Q}}_1)_{33} & = 2 a^3_3 - 2(c^3_{13} + Z_{331}(s)) A^1 - 2(c^3_{23} + Z_{332}(s)) A^2 - 2(c^3_{33} - s^{11}_3 - s^{22}_3) A^3 \\
& \   + 4(s^{21}_3 + \sigma s^{13}_3 - 2\sigma^2 s^{32}_3) D_1 + 4(s^{12}_3 + \sigma s^{23}_3 - 2\sigma^2 s^{31}_3) D_2 + 4\sigma(s^{11}_3 + s^{22}_3 - 2 s^{33}_3) D_3, \\
(\tilde{\bar{Q}}_1)_{12} & = a^1_2 + a^2_1 - (c^1_{12} + c^2_{11} + Z_{121}(s)) A^1 - (c^1_{22} + c^2_{12} + Z_{122}(s)) A^2 \\
& \  - (c^1_{23} + c^2_{13} + Z_{123}(s)) A^3 + 2\left(s^{31}_1 + \sigma(s^{12}_1 + s^{22}_2 + s^{33}_2 - 2s^{11}_2) - 2\sigma^2 s^{23}_1\right) D_1 \\ 
& \   + 2\left( s^{32}_2 + \sigma(s^{21}_2 + s^{11}_1 + s^{33}_1 - 2s^{22}_1) - 2\sigma^2 s^{13}_2 \right) D_2 \\
& \ \ +
2\left( s^{13}_1 + s^{23}_2 + \sigma( s^{32}_1 + s^{31}_2) - 2\sigma^2(s^{12}_2 + s^{21}_1)\right) D_3, \\
(\tilde{\bar{Q}}_1)_{23} & = a^2_3 + a^3_2 - (c^2_{13} + c^3_{12} + Z_{231}(s)) A^1 - (c^2_{23} + c^3_{22} + Z_{232}(s)) A^2 \\
& \  - (c^2_{33} + c^3_{23} + Z_{233}(s)) A^3 +
2\left( s^{21}_2 + s^{31}_3 + \sigma( s^{13}_2 + s^{12}_3) - 2\sigma^2(s^{32}_2 + s^{23}_3)\right) D_1 \\
& \ + 2\left( s^{12}_2 + \sigma(s^{23}_2 + s^{11}_3 + s^{33}_3 - 2s^{22}_3) - 2\sigma^2 s^{31}_2 \right) D_2 \\
& \ \ + 2\left(s^{13}_3 + \sigma(s^{11}_2 + s^{22}_2 + s^{32}_3 - 2s^{33}_2) - 2\sigma^2 s^{21}_3\right) D_3, \\
(\tilde{\bar{Q}}_1)_{31} & = a^3_1 + a^1_3 - (c^3_{11} + c^1_{13} + Z_{311}(s)) A^1 - (c^3_{12} + c^1_{23} + Z_{312}(s)) A^2 \\
& \  - (c^3_{13} + c^1_{33} + Z_{313}(s)) A^3 + 2\left( s^{21}_1 + \sigma(s^{13}_1 + s^{22}_3 + s^{33}_3 - 2s^{11}_3) - 2\sigma^2 s^{32}_1 \right) D_1 \\
& \ + 2\left( s^{12}_1 + s^{32}_3 + \sigma( s^{23}_1 + s^{21}_3) - 2\sigma^2(s^{31}_1 + s^{13}_3)\right) D_2 \\
& \ \ + 2\left(s^{23}_3 + \sigma(s^{11}_1 + s^{22}_1 + s^{31}_3 - 2s^{33}_1) - 2\sigma^2 s^{12}_3\right) D_3,
\end{aligned}
\end{equation}
where $Z_{ijk}(s)$ represents a linear combination of the $s^{ij}_k$ whose precise form is irrelevant.  Regardless of the values of $a^{kij}_\ell$, we can set
\[ (\tilde{\bar{Q}}_{ij}) = \delta_{ij} \mu I_3 \]
and solve equations \eqref{new-Qij-details-3D} for the variables
\[ 
s_1^{1,1}, s_1^{2,3}, s_1^{3,2}, s_2^{2,2}, s_2^{1,3}, s_2^{3,1}, s_3^{3,3}, s_3^{1,2}, s_3^{2,1}, s_1^{3,1}, s_2^{3,2}, 
s_2^{2,3}, s_2^{1,2}, s_3^{1,3}, s_3^{3,1}, s_1^{2,1}, s_3^{2,3}, s_3^{3,2} 
\]
(from the coefficients of $D_1, D_2, D_3$) and
\[ 
s_1^{3,3}, s_2^{1,1}, s_3^{2,2}, c^1_{1,2}, c^1_{1,3}, c^2_{1,2}, c^2_{2,3}, c^3_{1,3}, c^3_{2,3}, c^2_{1,1},
 c^1_{2,2}, c^2_{3,3}, c^3_{2,2}, c^1_{3,3}, c^3_{1,1}, c^1_{2,3}, c^2_{1,3}, c^3_{1,2} 
 \]
(from the coefficients of $A^1, A^2, A^3$).  This solution makes use of the fact that $0 < |\sigma| < \tfrac{1}{2}$,
and while the explicit solution is rather complicated, it should be fairly clear that such a solution exists for $|\sigma|>0$ sufficiently small.  
Then we can set
\[ 
\tilde{\bar{Q}}_0 = \lambda I_3 
\]
and solve equations \eqref{new-Q0-details-3D} for the variables $c^1_{11}, c^2_{22}, c^3_{33}$ if and only if the matrix
\[ 
a^1_1 + a^2_2 + a^3_3 + \lambda I_3  = \begin{bmatrix} a^{111}_1 + a^{112}_2 + a^{113}_3 + \lambda & a^{112}_1 + a^{122}_2 + a^{123}_3 & a^{113}_1 + a^{123}_2 + a^{133}_3 \\[0.1in] 
a^{112}_1 + a^{122}_2 + a^{123}_3 & a^{122}_1 + a^{222}_2 + a^{223}_3 + \lambda & a^{123}_1 + a^{223}_2 + a^{233}_3 \\[0.1in] 
a^{113}_1 + a^{123}_2 + a^{133}_3 & a^{123}_1 + a^{223}_2 + a^{233}_3 & a^{133}_1 + a^{233}_2 + a^{333}_3 + \lambda
\end{bmatrix}
 \]
is a linear combination of $A^1$, $A^2$, and $A^3$, which in turn is true if and only if
\begin{equation}\label{aijkl-condition-3D}
\begin{gathered}
a^{111}_1 + a^{112}_2 + a^{113}_3 + 2\sigma(a^{123}_1 + a^{223}_2 + a^{233}_3) + \lambda = 0, \\
a^{122}_1 + a^{222}_2 + a^{223}_3 + 2\sigma(a^{113}_1 + a^{123}_2 + a^{133}_3) + \lambda = 0, \\
a^{133}_1 + a^{233}_2 + a^{333}_3 + 2\sigma(a^{112}_1 + a^{122}_2 + a^{123}_3) + \lambda = 0.
\end{gathered}
\end{equation}
Thus, the strong symmetric positivity condition \eqref{optimal-Qs-3d} can be realized if and only if the $a^{kij}_\ell$ satisfy equations \eqref{aijkl-condition-3D}.

The next step is to identify the values of $h^\alpha_{ijk}$ that satisfy equations \eqref{d-annihilator-eqns} 
for some $a^{kij}_\ell$ satisfying equation \eqref{aijkl-condition-3D}.  Equations \eqref{d-annihilator-eqns} may be written in matrix form as
\begin{equation}
 \langle A^k, h^\alpha_\ell \rangle + \langle H^\alpha, a^k_\ell \rangle = 0. \label{d-annihilator-eqns-matrixform-3D}
\end{equation}
The conditions \eqref{aijkl-condition-3D} are equivalent to
\[ \langle \bar{H}^\alpha, a^1_1 + a^2_2 + a^3_3 \rangle =  -\lambda, \qquad \alpha=4,5,6; \]
therefore,
\[ \langle H^\alpha, a^1_1 + a^2_2 + a^3_3 \rangle = \langle \gamma^\alpha_\beta \bar{H}^\beta, a^1_1 + a^2_2 + a^3_3 \rangle =
  -(\gamma^\alpha_4 + \gamma^\alpha_5 + \gamma^\alpha_6) \lambda, \qquad \alpha=4,5,6. \]
Then equation \eqref{d-annihilator-eqns-matrixform-3D} implies that we must have
\begin{equation*}
 \langle A^1, h^\alpha_1 \rangle + \langle A^2, h^\alpha_2 \rangle +  \langle A^3, h^\alpha_3 \rangle 
 = -\langle H^\alpha, a^1_1 + a^2_2 + a^3_3 \rangle = (\gamma^\alpha_4 + \gamma^\alpha_5 + \gamma^\alpha_6)\lambda, \qquad \alpha=4,5,6,
\end{equation*}
or, equivalently,
\begin{equation}\label{h-condition-3D}
6 h^\alpha_{123} - 2\sigma(h^\alpha_{111} + h^\alpha_{222} + h^\alpha_{333}) = (\gamma^\alpha_4 + \gamma^\alpha_5 + \gamma^\alpha_6)\lambda, \qquad \alpha=4,5,6.
\end{equation}
Conversely, for any values of $h^\alpha_{ijk}$ that satisfy the conditions \eqref{h-condition-3D}, 
there exist values of $a^{kij}_\ell$ that satisfy the conditions \eqref{aijkl-condition-3D}.

Equations \eqref{d-Gauss-eqns} are considerably more complicated here than in the $n=2$ case.  
Taking the second Bianchi equations into account, there are 15 equations for the 15 components $r_{ijk\ell,m}$, 
with left-hand sides that are linear functions of the 30 components $h^\alpha_{ijk}$.  
We will regard equations \eqref{d-Gauss-eqns} as defining a linear map $\tilde{G}$ from the$30$-dimensional space $\mathcal{H}$ of $h^\alpha_{ijk}$ values 
to the $15$-dimensional space $\mathcal{R}$ of $r_{ijk\ell,m}$ values; 
what remains to show is that the restriction of $\tilde{G}$ to the 27-dimensional affine subspace defined by equations \eqref{h-condition-3D} is surjective onto $\mathcal{R}$.

First, observe that we can write equations \eqref{d-Gauss-eqns} in matrix form as
\begin{equation}\label{d-Gauss-eqns-matrix-form}
\sum_{\alpha = 4}^6 \tilde{G}^\alpha \hat{h}^\alpha = \hat{r},
\end{equation}
where $\tilde{G}^\alpha$ denotes the $15 \times 10$ matrix
\[ \tilde{G}^{\alpha} = \begin{bmatrix}
H^{\alpha}_{22} & 0 & 0 & -2 H^{\alpha}_{12} & 0 & 0 & H^{\alpha}_{11} & 0 & 0 & 0 \\[0.1in]
H^{\alpha}_{33} & 0 & 0 & 0 & -2 H^{\alpha}_{31} & 0 & 0 & H^{\alpha}_{11} & 0 & 0  \\[0.1in]
0 & 0 & 0 & -H^{\alpha}_{33} & H^{\alpha}_{23} & 0 & 0 & -H^{\alpha}_{12} & 0 & H^{\alpha}_{31}  \\[0.1in]
-H^{\alpha}_{23} & 0 & 0 & H^{\alpha}_{31} & H^{\alpha}_{12} & 0 & 0 & 0 & 0 & -H^{\alpha}_{11} \\[0.1in]
0 & 0 & 0 & H^{\alpha}_{23} & -H^{\alpha}_{22} & 0 & -H^{\alpha}_{31} & 0 & 0  & H^{\alpha}_{12}  \\[0.1in]
0 & H^{\alpha}_{33} & 0 & 0 & 0 & -2 H^{\alpha}_{23} & 0 & 0 & H^{\alpha}_{22} & 0 \\[0.1in]
0 & H^{\alpha}_{11} & 0 & H^{\alpha}_{22} & 0 & 0 & -2 H^{\alpha}_{12} & 0 & 0 & 0 \\[0.1in]
0 & 0 & 0 & -H^{\alpha}_{23} & 0 & -H^{\alpha}_{11} & H^{\alpha}_{31} & 0 & 0 & H^{\alpha}_{12}   \\[0.1in]
0 & -H^{\alpha}_{31} & 0 & 0 & 0 & H^{\alpha}_{12} & H^{\alpha}_{23} & 0 & 0 & -H^{\alpha}_{22} \\[0.1in]
0 & 0 & 0 & 0 & 0 & H^{\alpha}_{31} & -H^{\alpha}_{33} & 0 & -H^{\alpha}_{12} & H^{\alpha}_{23}  \\[0.1in]
0 & 0 & H^{\alpha}_{11} & 0 & H^{\alpha}_{33} & 0 & 0 & -2 H^{\alpha}_{31} & 0 & 0 \\[0.1in]
0 & 0 & H^{\alpha}_{22} & 0 & 0 & H^{\alpha}_{33} & 0 & 0 & -2 H^{\alpha}_{23} & 0  \\[0.1in]
0 & 0 & 0 & 0 & 0 & -H^{\alpha}_{31} & 0 & -H^{\alpha}_{22} & H^{\alpha}_{12} & H^{\alpha}_{23} \\[0.1in]
0 & 0 & -H^{\alpha}_{12} & 0 & 0 & 0 & 0 & H^{\alpha}_{23} & H^{\alpha}_{31} & -H^{\alpha}_{33}  \\[0.1in]
0 & 0 & 0 & 0 & -H^{\alpha}_{23} & 0 & 0 & H^{\alpha}_{12} & -H^{\alpha}_{11} & H^{\alpha}_{31}
\end{bmatrix}; \]
$\hat{h}^\alpha$ denotes the vector
\[ 
\hat{h}^{\alpha} = 
\begin{bmatrix} h^{\alpha}_{111} & h^{\alpha}_{222} & h^{\alpha}_{333} & h^{\alpha}_{112} & h^{\alpha}_{311} 
& h^{\alpha}_{223} & h^{\alpha}_{122} & h^{\alpha}_{331} & h^{\alpha}_{233} & h^{\alpha}_{123}   
\end{bmatrix}^{\sf T} ,  
\]
and $\hat{r}$ denotes the vector
\begin{align*}
 \hat{r} & = [
r_{1212,1}\  \ r_{3131,1} \  \  r_{2331,1} \  \  r_{3112,1} \  \  r_{1223,1} \  \ r_{2323,2} \  \  r_{1212,2} \  \  r_{3112,2}   \\
& \qquad   r_{1223,2}\ \ r_{2331,2} \  \  r_{3131,3} \  \  r_{2323,3} \  \   r_{1223,3}\ \ r_{2331,3} \  \  r_{3112,3}
]^{\sf T}  .
\end{align*}
Thus the map $\tilde{G}$ is represented by the $15 \times 30$ matrix
\[ \begin{bmatrix} \tilde{G}^4 & \tilde{G}^5 & \tilde{G}^6 \end{bmatrix} \]
acting on the vector
\[ \hat{h} = \begin{bmatrix} \hat{h}^4 \\[0.1in] \hat{h}^5 \\[0.1in] \hat{h}^6 \end{bmatrix}. \]

Now, let $G$ denote the restriction of $\tilde{G}$ to the 27-dimensional subspace defined by equations \eqref{h-condition-3D}.  
By solving equations \eqref{h-condition-3D} for $h^\alpha_{123}$ and substituting into equation \eqref{d-Gauss-eqns-matrix-form}, we can represent $G$ as
\[  
G(\hat{h}) = \sum_{\alpha = 4}^6 G^\alpha \hat{\bar{h}}^\alpha + \hat{r}_0, 
\]
where $G^\alpha$ denotes the $15 \times 9$ matrix
\begin{multline}\label{G-alpha-matrix}
 G^{\alpha} = \\
 \begin{bmatrix}
H^{\alpha}_{22} \! \! & \!  \!  0 \! \! & \!  \!  0 \! \! & \!  \!  -2 H^{\alpha}_{12} \! \! & \!  \!  0 \! \! & \!  \!  0 \! \! 
   & \!  \!  H^{\alpha}_{11} \! \! & \!  \!  0 \! \! & \!  \!  0  \\[0.1in]
H^{\alpha}_{33} \! \! & \!  \!  0 \! \! & \!  \!  0 \! \! & \!  \!  0 \! \! & \!  \!  -2 H^{\alpha}_{31} \! \! & \!  \!  0 \! \! 
  & \!  \!  0 \! \! & \!  \!  H^{\alpha}_{11} \! \! & \!  \!  0   \\[0.1in]
\tfrac{1}{3}\sigma H^{\alpha}_{31} \! \! & \!  \!  \tfrac{1}{3}\sigma H^{\alpha}_{31} \! \! & \!  \!  \tfrac{1}{3}\sigma H^{\alpha}_{31} \! \! 
   & \!  \!  -H^{\alpha}_{33} \! \! & \!  \!  H^{\alpha}_{23} \! \! & \!  \!  0 \! \! & \!  \!  0 \! \! & \!  \!  -H^{\alpha}_{12} \! \! & \!  \!  0   \\[0.1in]
-H^{\alpha}_{23} \! - \! \tfrac{1}{3}\sigma H^{\alpha}_{11} \! \! & \!  \!  - \tfrac{1}{3}\sigma H^{\alpha}_{11} \! \! 
    & \!  \!  - \tfrac{1}{3}\sigma H^{\alpha}_{11} \! \! & \!  \!  H^{\alpha}_{31} \! \! & \!  \!  H^{\alpha}_{12} \! \! & \!  \!  0 \! \! 
      & \!  \!  0 \! \! & \!  \!  0 \! \! & \!  \!  0  \\[0.1in]
\tfrac{1}{3} \sigma H^{\alpha}_{12} \! \! & \!  \!  \tfrac{1}{3} \sigma H^{\alpha}_{12} \! \! 
   & \!  \!  \tfrac{1}{3} \sigma H^{\alpha}_{12} \! \! & \!  \!  H^{\alpha}_{23} \! \! & \!  \!  -H^{\alpha}_{22} \! \! & \!  \!  0 \! \! 
       & \!  \!  -H^{\alpha}_{31} \! \! & \!  \!  0 \! \! & \!  \!  0    \\[0.1in]
0 \! \! & \!  \!  H^{\alpha}_{33} \! \! & \!  \!  0 \! \! & \!  \!  0 \! \! & \!  \!  0 \! \! 
   & \!  \!  -2 H^{\alpha}_{23} \! \! & \!  \!  0 \! \! & \!  \!  0 \! \! & \!  \!  H^{\alpha}_{22}  \\[0.1in]
0 \! \! & \!  \!  H^{\alpha}_{11} \! \! & \!  \!  0 \! \! & \!  \!  H^{\alpha}_{22} \! \! & \!  \!  0 \! \! 
   & \!  \!  0 \! \! & \!  \!  -2 H^{\alpha}_{12} \! \! & \!  \!  0 \! \! & \!  \!  0  \\[0.1in]
\tfrac{1}{3}\sigma H^{\alpha}_{12} \! \! & \!  \!  \tfrac{1}{3}\sigma H^{\alpha}_{12} \! \! 
  & \!  \!  \tfrac{1}{3}\sigma H^{\alpha}_{12} \! \! & \!  \!  -H^{\alpha}_{23} \! \! & \!  \!  0 \! \! 
     & \!  \!  -H^{\alpha}_{11} \! \! & \!  \!  H^{\alpha}_{31} \! \! & \!  \!  0 \! \! & \!  \!  0    \\[0.1in]
-\tfrac{1}{3}\sigma H^{\alpha}_{22} \! \! & \!  \!  -H^{\alpha}_{31} \! -\! \tfrac{1}{3}\sigma H^{\alpha}_{22} \! \! 
   & \!  \!  -\tfrac{1}{3}\sigma H^{\alpha}_{22} \! \! & \!  \!  0 \! \! & \!  \!  0 \! \! & \!  \!  H^{\alpha}_{12} \! \! 
     & \!  \!  H^{\alpha}_{23} \! \! & \!  \!  0 \! \! & \!  \!  0  \\[0.1in]
\tfrac{1}{3}\sigma H^{\alpha}_{23} \! \! & \!  \!  \tfrac{1}{3}\sigma H^{\alpha}_{23} \! \! 
  & \!  \!  \tfrac{1}{3}\sigma H^{\alpha}_{23} \! \! & \!  \!  0 \! \! & \!  \!  0 \! \! & \!  \!  H^{\alpha}_{31} \! \! 
    & \!  \!  -H^{\alpha}_{33} \! \! & \!  \!  0 \! \! & \!  \!  -H^{\alpha}_{12}   \\[0.1in]
0 \! \! & \!  \!  0 \! \! & \!  \!  H^{\alpha}_{11} \! \! & \!  \!  0 \! \! & \!  \!  H^{\alpha}_{33} \! \! & \!  \!  0 \! \! 
  & \!  \!  0 \! \! & \!  \!  -2 H^{\alpha}_{31} \! \! & \!  \!  0  \\[0.1in]
0 \! \! & \!  \!  0 \! \! & \!  \!  H^{\alpha}_{22} \! \! & \!  \!  0 \! \! & \!  \!  0 \! \! & \!  \!  H^{\alpha}_{33} \! \! 
  & \!  \!  0 \! \! & \!  \!  0 \! \! & \!  \!  -2 H^{\alpha}_{23}   \\[0.1in]
\tfrac{1}{3}\sigma H^{\alpha}_{23} \! \! & \!  \!  \tfrac{1}{3}\sigma H^{\alpha}_{23}  \! \! & \!  \!  \tfrac{1}{3}\sigma H^{\alpha}_{23}  \! \! 
  & \!  \!  0 \! \! & \!  \!  0 \! \! & \!  \!  -H^{\alpha}_{31} \! \! & \!  \!  0 \! \! & \!  \!  -H^{\alpha}_{22} \! \! & \!  \!  H^{\alpha}_{12} \\[0.1in]
-\tfrac{1}{3}\sigma H^{\alpha}_{33} \! \! & \!  \!  -\tfrac{1}{3}\sigma H^{\alpha}_{33} \! \! & \!  \!  -H^{\alpha}_{12} \! - \! \tfrac{1}{3}\sigma H^{\alpha}_{33} \! \! 
   & \!  \!  0 \! \! & \!  \!  0 \! \! & \!  \!  0 \! \! & \!  \!  0 \! \! & \!  \!  H^{\alpha}_{23} \! \! & \!  \!  H^{\alpha}_{31}   \\[0.1in]
\tfrac{1}{3}\sigma H^{\alpha}_{31} \! \! & \!  \!  \tfrac{1}{3}\sigma H^{\alpha}_{31} \! \! & \!  \!  \tfrac{1}{3}\sigma H^{\alpha}_{31} \! \! 
  & \!  \!  0 \! \! & \!  \!  -H^{\alpha}_{23} \! \! & \!  \!  0 \! \! & \!  \!  0 \! \! & \!  \!  H^{\alpha}_{12} \! \! & \!  \!  -H^{\alpha}_{11}
\end{bmatrix};
\end{multline}
$\hat{\bar{h}}^\alpha$ denotes the vector
\[ 
\hat{\bar{h}}^{\alpha} =  \begin{bmatrix} h^{\alpha}_{111} & h^{\alpha}_{222} & h^{\alpha}_{333} & h^{\alpha}_{112} 
& h^{\alpha}_{311} & h^{\alpha}_{223} & h^{\alpha}_{122} & h^{\alpha}_{331} & h^{\alpha}_{233}   \end{bmatrix}^{\sf T} ,  
\]
and $\hat{r}_0$ is the vector obtained by evaluating $G$ on the vector $\hat{h}$ with
\[ 
h^\alpha_{123} = \tfrac{1}{6} (\gamma^\alpha_4 + \gamma^\alpha_5 + \gamma^\alpha_6)\lambda, \qquad \alpha = 4,5,6, 
\]
and all other $h^\alpha_{ijk}$ equal to $0$.  Thus, it suffices to show that the $15 \times 27$ matrix
\[ 
\begin{bmatrix} G^4 & G^5 & G^6 \end{bmatrix} 
\]
has rank 15.

In order to compute the rank of this matrix, observe that equation \eqref{fancy-H} implies that
\[ 
\begin{bmatrix} G^4 & G^5 & G^6 \end{bmatrix}  =[\gamma^\alpha_\beta] \begin{bmatrix} \bar{G}^4 & \bar{G}^5 & \bar{G}^6 \end{bmatrix} , 
\]
where $\bar{G}^\alpha$ represents the matrix $G^\alpha$ with all entries $H^\alpha_{ij}$ replaced by $\bar{H}^\alpha_{ij}$.  
Thus, the rank of the matrix $\begin{bmatrix} G^4 & G^5 & G^6 \end{bmatrix}$ is equal to the rank of the matrix 
$\begin{bmatrix} \bar{G}^4 & \bar{G}^5 & \bar{G}^6 \end{bmatrix}$.  
We can compute this rank explicitly: Substitution of  the expressions \eqref{Hbar-basis} 
for $\bar{H}^4, \bar{H}^5, \bar{H}^6$ into \eqref{G-alpha-matrix} yields
\[
 \bar{G}^4 =
 \begin{bmatrix}
0   &     0   &     0   &     0   &     0   &     0   &     1   &     0   &     0  \\[0.1in]
0   &     0   &     0   &     0   &     0   &     0   &     0   &     1   &     0   \\[0.1in]
0   &     0   &     0   &     0   &     \sigma   &     0   &     0   &     0   &     0   \\[0.1in]
- \tfrac{4}{3}\sigma    &     - \tfrac{1}{3}\sigma    &     - \tfrac{1}{3}\sigma   &     0   &     0   &     0   &     0   &     0   &     0  \\[0.1in]
0   &     0   &     0   &     \sigma   &     0   &     0   &     0   &     0   &     0    \\[0.1in]
0   &     0   &     0   &     0   &     0   &     -2 \sigma   &     0   &     0   &     0  \\[0.1in]
0   &     1   &     0   &     0   &     0   &     0   &     0   &     0   &     0  \\[0.1in]
0   &     0   &     0   &     -\sigma   &     0   &     -1   &     0   &     0   &     0    \\[0.1in]
0   &     0    &     0   &     0   &     0   &     0   &     \sigma   &     0   &     0  \\[0.1in]
\tfrac{1}{3}\sigma^2   &     \tfrac{1}{3}\sigma^2   &     \tfrac{1}{3}\sigma^2   &     0   &     0   &     0   &     0   &     0   &     0   \\[0.1in]
0   &     0   &     1   &     0   &     0   &     0   &     0   &     0   &     0  \\[0.1in]
0   &     0   &     0   &     0   &     0   &     0   &     0   &     0   &     -2 \sigma   \\[0.1in]
\tfrac{1}{3}\sigma^2   &     \tfrac{1}{3}\sigma^2    &     \tfrac{1}{3}\sigma^2    &     0   &     0   &     0   &     0   &     0   &     0 \\[0.1in]
0   &     0   &     0    &     0   &     0   &     0   &     0   &     \sigma   &     0   \\[0.1in]
0   &     0   &     0   &     0   &     -\sigma   &     0   &     0   &     0   &     -1
\end{bmatrix},
\]
\[ \bar{G}^5 =
 \begin{bmatrix}
1 &   0 &   0 &   0 &   0 &   0 &   0 &   0 &   0  \\[0.1in]
0 &   0 &   0 &   0 &   -2 \sigma &   0 &   0 &   0 &   0   \\[0.1in]
\tfrac{1}{3}\sigma^2 &   \tfrac{1}{3}\sigma^2 &   \tfrac{1}{3}\sigma^2 &   0 &   0 &   0 &   0 &   0 &   0   \\[0.1in]
0  &  0  &  0  &   \sigma &   0 &   0 &   0 &   0 &   0  \\[0.1in]
0 &   0 &   0 &   0 &   -1 &   0 &   -\sigma &   0 &   0    \\[0.1in]
0 &   0 &   0 &   0 &   0 &   0 &   0 &   0 &   1  \\[0.1in]
0 &   0 &   0 &   1 &   0 &   0 &   0 &   0 &   0  \\[0.1in]
0 &   0 &   0 &   0 &   0 &  0  &   \sigma &   0 &   0    \\[0.1in]
-\tfrac{1}{3}\sigma  &   -\tfrac{4}{3}\sigma  &   -\tfrac{1}{3}\sigma  &   0 &   0 &   0 &   0 &   0 &   0  \\[0.1in]
0 &   0 &  0 &   0 &   0 &   \sigma &   0 &   0 &   0   \\[0.1in]
0 &   0 &   0 &   0 &   0 &   0 &   0 &   -2 \sigma &   0  \\[0.1in]
0 &   0 &   1 &   0 &   0 &   0 &   0 &   0 &   0   \\[0.1in]
0 &   0  &   0  &   0 &   0 &   -\sigma &   0 &   -1 &   0 \\[0.1in]
0 &   0 &   0  &   0 &   0 &   0 &   0 &   0 &   \sigma   \\[0.1in]
\tfrac{1}{3}\sigma^2 &   \tfrac{1}{3}\sigma^2 &   \tfrac{1}{3}\sigma^2 &   0 &   0 &   0 &   0 &   0 &   0
\end{bmatrix},
\]
\[ \bar{G}^6 =
\begin{bmatrix}
0   &     0   &     0   &     -2 \sigma   &     0   &     0   &     0   &     0   &     0  \\[0.1in]
1   &     0   &     0   &     0   &     0   &     0   &     0   &     0   &     0   \\[0.1in]
0  &   0  &   0  &     -1   &     0   &     0   &     0   &     -\sigma   &     0   \\[0.1in]
0  &   0   &   0   &     0   &     \sigma   &     0   &     0   &     0   &     0  \\[0.1in]
\tfrac{1}{3} \sigma^2   &     \tfrac{1}{3} \sigma^2   &     \tfrac{1}{3} \sigma^2   &     0   &     0   &     0   &     0   &     0   &     0    \\[0.1in]
0   &     1   &     0   &     0   &     0   &     0   &     0   &     0   &     0  \\[0.1in]
0   &     0   &     0   &     0   &     0   &     0   &     -2 \sigma   &     0   &     0  \\[0.1in]
\tfrac{1}{3}\sigma^2   &     \tfrac{1}{3}\sigma^2   &     \tfrac{1}{3}\sigma^2   &    0   &     0   &     0   &     0   &     0   &     0    \\[0.1in]
0 &   0   &    0   &     0   &     0   &     \sigma   &    0   &     0   &     0  \\[0.1in]
0   &    0   &    0   &     0   &     0   &     0   &     -1   &     0   &     -\sigma   \\[0.1in]
0   &     0   &     0   &     0   &     1   &     0   &     0   &     0   &     0  \\[0.1in]
0   &     0   &     0   &     0   &     0   &     1   &     0   &     0   &     0   \\[0.1in]
0   &    0   &    0    &     0   &     0   &    0   &     0   & 0   &     \sigma \\[0.1in]
-\tfrac{1}{3}\sigma    &     -\tfrac{1}{3}\sigma    &      -\tfrac{4}{3}\sigma    &     0   &     0   &     0   &     0   &     0   &     0   \\[0.1in]
0   &    0   &   0   &     0   &     0   &     0   &     0   &     \sigma   &     0
\end{bmatrix}.
\]
Then a direct computation shows that the matrix $\begin{bmatrix} \bar{G}^4 & \bar{G}^5 & \bar{G}^6 \end{bmatrix}$ 
has rank $15$; for example, the submatrix consisting of columns $(2,3,6,7,9,10,12,14,17,18,19,20,22,23,25)$ has determinant equal to
$-\tfrac{64}{27} \sigma^3 (\sigma - 1)^3 (\sigma^2 + \sigma + 1)^2 \neq 0$.
Therefore, $G$ is surjective onto $\mathcal{R}$, and any given values of $r_{ijk\ell,m}$ may be realized by an appropriate choice of $h^\alpha_{ijk}$.

We conclude that, for any $\hat{R} \neq 0$ and any $r_{ijk\ell,m}$, there exist solutions $H^\alpha_{ij}$, $A^{kij}$, $h^\alpha_{ijk}$, 
and $a^{kij}_\ell$ to equations \eqref{Gauss-eqns}--\eqref{d-annihilator-eqns} that satisfy the conditions \eqref{aijkl-condition-3D} 
and \eqref{h-condition-3D}, and hence the linearized system \eqref{equation:unsymmetrized-system} can be transformed 
to a strongly symmetric positive system via a change of variables of the form \eqref{change-of-variables-form}.  
This completes the proof for $n=3$.
\qed

In closing, we note that the strong symmetric positivity condition \eqref{define-Q1} is extremely fragile under changes 
of coordinates, as described in Lemma \ref{Q0-Q1-transformation-lemma}---indeed, this is precisely why
we have to choose local coordinates so carefully in our proof of Theorem \ref{main-ssp-theorem}.  
In future work, we hope to explore this condition in more depth and to obtain a more intrinsic understanding of its significance.

\appendix
\section{Theorems from analysis}\label{classic-theorems-app}

\begin{theorem}[Nash-Moser-Schwartz-Sergeraert]\label{Nash-Moser-theorem}
Let $E_0, F_0$ be real Banach spaces, and let $E_k$ {\rm (}resp. $F_k${\rm )}, $k \in \mathbb{N}$,
be vector subspaces of $E_0$ {\rm (}resp. $F_0${\rm )}, such that $E_{k+1} \subset E_k$ {\rm (}resp., $F_{k+1} \subset F_k${\rm )}.
Let each space $E_k$ {\rm (}resp. $F_k${\rm )} be equipped with a Banach norm $\| \cdot \|_k$ such that
the inclusions $E_{k+1} \hookrightarrow E_k$ {\rm (}resp. $F_{k+1} \hookrightarrow F_k${\rm )} are continuous.
Let $E_\infty = \displaystyle{\bigcap_{k=0}^\infty} E_k$ and $F_\infty = \displaystyle{\bigcap_{k=0}^\infty} F_k$ be given the intersection topology.
Moreover, suppose that there exists a family of linear ``smoothing operators" $S(t): E_0 \to E_\infty$, defined for $t \in \R^+$, satisfying
\begin{equation}\label{smoothing-estimates}
\begin{alignedat}{2}
  & \|\bfu - S(t) \bfu\|_i \leq M_{i,j} t^{i-j} \|\bfu\|_j, & \qquad &  t \in \R^+, i \leq j, \bfu \in E_j;  \qquad \qquad \qquad \qquad \\
  & \|S(t) \bfu\|_j \leq M_{i,j} t^{j-i} \|\bfu\|_i, & \qquad & t \in \R^+, i \leq j, \bfu \in E_i,
\end{alignedat}
\end{equation}
where $M_{ij}$ are positive real constants.

Let $\bfu_0 \in E_\infty${\rm ;} let $D_0 \subset E_0$ be a neighborhood of $\bfu_0$, and let $D_k = D_0 \cap E_k$ for $k \geq 0$.
%Let $\rho > 0$, and for $k \geq 0$, let $D_k \subset E_k$ denote the ball of radius $\rho$ about the origin.
Let $\Phi: D_0 \to F_0$ be a $C^2$ map, and suppose that there exists an integer $\alpha \geq 0$ satisfying the following assumptions{\rm :}
\begin{enumerate}
%\item $\Phi(\mathbf{0}) = \mathbf{0}$.
\item[\rm (i)] For any $k \geq 0$, $\Phi(D_k) \subset F_k$.
\item[\rm (ii)] There exists a constant $C'$ such that,
for any $\bfu \in D_\alpha$ and $\bfv \in E_\alpha$ such that $\bfu + \bfv \in D_\alpha$,
\begin{equation}\label{Nash-Moser-Phi-bounds}
\begin{gathered}
\| \Phi(\bfu + \bfv)  - \Phi(\bfu) \|_\alpha  \leq C' \|\bfv\|_\alpha , \\
\| \Phi(\bfu + \bfv)  - \Phi(\bfu) - \Phi'(\bfu)\bfv \|_\alpha  \leq C'\|\bfv\|^2_\alpha.
\end{gathered}
\end{equation}
\item[(iii)] There exist constants $C_k > 0$ with the property that, for any $\bfu \in D_\alpha$, 
there exists a continuous linear map $R(\bfu): F_\alpha \to E_0$ such that, for all $\bfh \in F_{\alpha}$,
\[ \Phi'(\bfu)\, R(\bfu)\, \bfh = \bfh, \]
and for all $k \geq 0$, $\bfu \in D_{k+\alpha}$, and $\bfh \in F_{k+\alpha}$,
\begin{equation}
 \|R(\bfu)\,\bfh\|_k \leq C_k(\|\bfh\|_{k+\alpha} + \|\bfh\|_\alpha \|\bfu - \bfu_0\|_{k+\alpha} ). \label{Nash-Moser-estimates}
\end{equation}
\end{enumerate}
Then there exists $\epsilon > 0$ such that, for any $\bff \in F_\infty$ with
\[ \|\bff - \Phi(\bfu_0)\|_\alpha < \epsilon, \]
there exists $\bfu \in D_\infty$ such that
\[ 
\Phi(\bfu) = \bff. 
\]
\end{theorem}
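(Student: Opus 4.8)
The plan is to solve $\Phi(\bfu)=\bff$ by the Nash--Moser modified Newton scheme, using the smoothing operators $S(t)$ to compensate for the loss of $\alpha$ derivatives in the approximate right inverse $R(\bfu)$. Fix an exponent $\kappa\in(1,2)$ and a base $t_0>1$ (to be taken large, depending on $\epsilon$), and set $t_j=t_0^{\kappa^j}$. Starting from $\bfu_0$, with $\bfe_0=\bff-\Phi(\bfu_0)$, define inductively
\[
\bfu_{j+1}=\bfu_j+\bfv_j,\qquad \bfv_j=S(t_j)\,R(\bfu_j)\,\bfe_j,\qquad \bfe_{j+1}=\bff-\Phi(\bfu_{j+1}),
\]
continuing as long as $\bfu_j$ remains in $D_\alpha$ (one output of the estimates below is that it always does, so that $R(\bfu_j)$ is defined and, by Assumption~(1), $\bfe_j\in F_\infty$). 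Writing $\Phi(\bfu_{j+1})=\Phi(\bfu_j)+\Phi'(\bfu_j)\bfv_j+Q_j$ with $\|Q_j\|_\alpha\le C'\|\bfv_j\|_\alpha^2$ by \eqref{Nash-Moser-Phi-bounds}, and using the right-inverse identity $\Phi'(\bfu_j)R(\bfu_j)\bfe_j=\bfe_j$, the error obeys
\[
\bfe_{j+1}=\Phi'(\bfu_j)\bigl(I-S(t_j)\bigr)R(\bfu_j)\bfe_j-Q_j ,
\]
so that, again by the first bound in \eqref{Nash-Moser-Phi-bounds}, $\|\bfe_{j+1}\|_\alpha\le C'\|(I-S(t_j))R(\bfu_j)\bfe_j\|_\alpha+C'\|\bfv_j\|_\alpha^2$: the first term is a ``smoothing error'' to be killed by a negative power of $t_j$, the second is a quadratically small ``Newton error.''

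\emph{The inductive estimates.} The core of the argument is a simultaneous induction on $j$ for a family of bounds: a base-level estimate $\|\bfe_j\|_\alpha\le\delta\,t_j^{-L}$ with $\delta$ proportional to $\epsilon$, together with controlled growth of the higher norms, of the form $\|\bfu_j-\bfu_0\|_{a}\le \delta\,t_j^{\theta a}$ and $\|\bfe_j\|_a\le \delta\,t_j^{\theta' a}$ on a suitable range of $a$ (and $\|\bfu_j-\bfu_0\|_a\le$ a fixed constant below a threshold). To pass from step $j$ to $j+1$ one bounds $\|(I-S(t_j))R(\bfu_j)\bfe_j\|_\alpha$ by $M_{\alpha,\alpha+p}\,t_j^{-p}\|R(\bfu_j)\bfe_j\|_{\alpha+p}$, then applies the tame estimate \eqref{Nash-Moser-estimates} for $R(\bfu_j)$ to convert this into a combination of $\|\bfe_j\|_{2\alpha+p}$ and $\|\bfe_j\|_\alpha\|\bfu_j-\bfu_0\|_{2\alpha+p}$; the quadratic term $\|\bfv_j\|_\alpha$ and the higher norms $\|\bfv_j\|_a$ are estimated the same way, combining \eqref{Nash-Moser-estimates} with smoothing estimate~(2) and the uniform bound $\|S(t)\|_{E_a\to E_a}\le 1+M_{a,a}$. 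The exponents $p,L,\theta,\theta'$, the rate $\kappa$, and the smallness of $\epsilon$ must then be fixed \emph{consistently} so that both the smoothing error and the Newton error are absorbed into $\delta\,t_{j+1}^{-L}$, and so that the increments $\|\bfv_j\|_a$ remain summable over $j$ on the relevant range of $a$; the point is that the schedule $t_j=t_0^{\kappa^j}$ makes $\|\bfe_j\|_\alpha$ decay super-exponentially, which beats the merely exponential growth of the factors $t_j^{a}$ produced by the smoothing operators.

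\emph{Passage to the limit and conclusion.} Summability of $\|\bfv_j\|_\alpha$ shows $\{\bfu_j\}$ is Cauchy in $E_\alpha$; since it stays in $D_\alpha$ it converges to some $\bfu\in D_\alpha$. The uniform higher-norm bounds $\sup_j\|\bfu_j\|_a<\infty$, combined with this convergence and the properties of the smoothing operators, place the limit in $E_a$ for every $a$, so $\bfu\in D_\infty$. Finally, $\|\bfe_j\|_\alpha\to0$ and, by the first bound in \eqref{Nash-Moser-Phi-bounds}, $\|\Phi(\bfu_j)-\Phi(\bfu)\|_\alpha\le C'\|\bfu_j-\bfu\|_\alpha\to0$, so that $\bff-\Phi(\bfu)=\lim_j\bfe_j=0$; choosing $\epsilon$ small enough that the induction runs gives the asserted $\epsilon$.

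\emph{Main obstacle.} The delicate part is exactly the bookkeeping in the simultaneous induction --- the classical ``loss of derivatives'' balance. One must arrange that the $p$ derivatives gained from $I-S(t_j)$ outweigh the derivatives lost through \eqref{Nash-Moser-estimates} (and those lost in re-expressing $R(\bfu_j)\bfe_j$ and the quadratic error $Q_j$ in the norms actually available), while simultaneously keeping the higher-norm increments summable and keeping $\|\bfu_j-\bfu_0\|_0$ small enough that $\bfu_j\in D_\alpha$ is never violated. Producing one choice of all the exponents, of $\kappa$, and of $t_0(\epsilon)$ that closes the induction is where essentially all the work lies; the error recursion, the use of the right-inverse identity, and the limiting argument are routine once it is set up.
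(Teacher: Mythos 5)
Theorem \ref{Nash-Moser-theorem} is not proved in the paper: it is quoted as a known result, with the proof referred to \cite{Schwartz69} and \cite{Sergeraert70}, and the smoothed Newton iteration you set up is indeed the method of those references; your error recursion $\mathbf{e}_{j+1}=\Phi'(\bfu_j)\bigl(I-S(t_j)\bigr)R(\bfu_j)\mathbf{e}_j-Q_j$ is correctly derived from the exact right-inverse identity and \eqref{Nash-Moser-Phi-bounds}.

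However, what you defer as ``bookkeeping'' is the actual content of the theorem, and as sketched the induction does not visibly close under the stated hypotheses. The smoothing gain $t_j^{-p}$ is bought through $\|(I-S(t_j))R(\bfu_j)\mathbf{e}_j\|_\alpha\le M\,t_j^{-p}\|R(\bfu_j)\mathbf{e}_j\|_{\alpha+p}$, and the tame estimate \eqref{Nash-Moser-estimates} converts the right-hand side into $\|\mathbf{e}_j\|_{2\alpha+p}+\|\mathbf{e}_j\|_\alpha\|\bfu_j-\bfu_0\|_{2\alpha+p}$. The second term is harmless, since all higher norms of the iterates are generated by $S(t_j)$ acting on $R(\bfu_j)\mathbf{e}_j\in E_0$ via \eqref{smoothing-estimates}(2) and \eqref{Nash-Moser-estimates} with $k=0$. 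But the first term forces you to carry inductive bounds on $\|\mathbf{e}_j\|_a$ for $a>\alpha$, which you indeed posit ($\|\mathbf{e}_j\|_a\le\delta t_j^{\theta' a}$) --- and there is no hypothesis that propagates them: \eqref{Nash-Moser-Phi-bounds} controls $\Phi(\bfu+\bfv)-\Phi(\bfu)$ and the quadratic remainder only in $\|\cdot\|_\alpha$, so neither $\Phi'(\bfu_j)(I-S(t_j))R(\bfu_j)\mathbf{e}_j$ nor $Q_j$ is bounded in any norm above $\alpha$, and hence $\|\mathbf{e}_{j+1}\|_a$, $a>\alpha$, cannot be estimated from the recursion. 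Arranging the scheme so that only level-$\alpha$ information about $\Phi$ is ever needed is precisely the delicate point of \cite{Schwartz69} and \cite{Sergeraert70}, and your outline supplies neither this device nor any concrete admissible choice of $p$, $L$, $\theta$, $\theta'$, $\kappa$, $t_0$, $\epsilon$. Two smaller points: the bound $\|\Phi'(\bfu_j)w\|_\alpha\le C'\|w\|_\alpha$ is not literally the first line of \eqref{Nash-Moser-Phi-bounds} but must be extracted from it by a difference-quotient argument; and in the limit, $\sup_j\|\bfu_j\|_a<\infty$ together with convergence in $E_\alpha$ does not place the limit in $E_a$ in a general Banach scale (no weak compactness is assumed) --- you need summability of $\|\bfv_j\|_a$ in each $E_a$, or an interpolation inequality derived from \eqref{smoothing-estimates}, to obtain Cauchy-ness in every $E_a$ and hence $\bfu\in D_\infty$.
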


The proof of this theorem can be found in \cite{Schwartz69} and \cite{Sergeraert70}.

\begin{theorem}[Stein]\label{Stein-theorem}
Let $\Omega \subset \R^n$ be a bounded Lipschitz domain.  Then there exists a linear extension operator
\[ \scE: L^1(\Omega) \to L^1(\R^n) \]
satisfying{\rm :}
\begin{enumerate}
\item[\rm (i)] $(\scE f)\vert_\Omega = f$; i.e., $\scE$ is an extension operator.
\item[\rm (ii)]  The restriction of $\scE$ to $W^{k,p}(\Omega)$ is a bounded linear operator
\[ \scE:W^{k,p}(\Omega) \to W^{k,p}(\R^n), \qquad 1 \leq p \leq \infty, \ \ 0 \leq k < \infty. \]
\end{enumerate}
\end{theorem}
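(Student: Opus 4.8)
The plan is to reproduce Stein's construction from \cite{Stein70}; since the statement is the classical Stein extension theorem, the argument proceeds in three stages: build an extension operator for a half-space-like region (a \emph{special Lipschitz domain}) by reflecting across the boundary while averaging over all scales; prove that this operator is bounded on every $W^{k,p}$ simultaneously; and then patch the local constructions together with a partition of unity.

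First I would treat a special Lipschitz domain $\Omega^* = \{(x',x_n) \in \R^{n-1}\times\R : x_n > \varphi(x')\}$, where $\varphi$ is Lipschitz. The essential device is a \emph{regularized distance}: a function $\Delta^*$ on $\R^n \setminus \overline{\Omega^*}$, smooth there, comparable to $\operatorname{dist}(\cdot,\Omega^*)$, and obeying $|\partial^\alpha \Delta^*(x)| \le C_\alpha\, \Delta^*(x)^{1-|\alpha|}$ for every multi-index $\alpha$; it is obtained by a Whitney-type mollification of the ordinary distance, normalized so that moving a point below the graph vertically upward by any multiple $\lambda\ge 1$ of $\Delta^*$ lands it inside $\Omega^*$. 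I would then fix $\psi \in C^\infty([1,\infty))$ with rapid decay, $\int_1^\infty \psi\,d\lambda = 1$, and $\int_1^\infty \lambda^k\psi(\lambda)\,d\lambda = 0$ for all integers $k \ge 1$, and define, for $x_n < \varphi(x')$,
\[ (\scE_0 f)(x',x_n) = \int_1^\infty f\bigl(x', x_n + \lambda\,\Delta^*(x',x_n)\bigr)\,\psi(\lambda)\,d\lambda, \]
with $\scE_0 f = f$ on $\Omega^*$. This is linear, and Fubini together with the decay of $\psi$ show $\scE_0 f \in L^1(\R^n)$ when $f\in L^1(\Omega^*)$.

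The analytic core is the estimate $\|\scE_0 f\|_{W^{k,p}(\R^n)} \le C_{k,p}\|f\|_{W^{k,p}(\Omega^*)}$ for all $k\ge 0$ and $1 \le p\le\infty$. Differentiating under the integral, each $\partial^\gamma(\scE_0 f)$ below the graph is a finite sum of terms carrying products of factors $\partial^\beta \Delta^*$, which are singular of order $\Delta^{*\,1-|\beta|}$ near $\partial\Omega^*$; the point is that the argument of $f$ sits at height $\sim\lambda\Delta^*$ above the graph, so integrating by parts in $\lambda$ and using the vanishing moments of $\psi$ converts these singular factors into extra powers of $\lambda$ that, after the change of variables $y_n = x_n + \lambda\Delta^*$, are controlled by $\|D^k f\|_{L^p(\Omega^*)}$ via a Hardy-type inequality. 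The regularity is unlimited exactly because \emph{all} higher moments of $\psi$ vanish; had only the first $N$ moments vanished, one would obtain the bound only for $k\le N$. I expect this stage to be the main obstacle: constructing $\Delta^*$ with the sharp derivative bounds and carrying out the integration-by-parts bookkeeping is the technical heart of the proof.

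Finally, for a general bounded Lipschitz domain $\Omega$, I would cover $\partial\Omega$ by finitely many bounded open sets $U_1,\dots,U_m$ on each of which, after a rigid motion, $\Omega\cap U_i$ coincides with $\Omega_i^*\cap U_i$ for a special Lipschitz domain $\Omega_i^*$; choose $U_0\Subset\Omega$ with $\overline{\Omega}\subset\bigcup_{i=0}^m U_i$, a smooth partition of unity $\{\zeta_i\}$ with $\operatorname{supp}\zeta_i\subset U_i$ and $\sum_i\zeta_i\equiv 1$ near $\overline{\Omega}$, and cutoffs $\chi_i\in C^\infty_c(U_i)$ with $\chi_i\equiv 1$ on $\operatorname{supp}\zeta_i$. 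Setting
\[ \scE f = \zeta_0 f \;+\; \sum_{i=1}^m \chi_i\cdot\scE_0^{(i)}(\zeta_i f), \]
where $\zeta_0 f$ is extended by zero and $\scE_0^{(i)}$ is the stage-one operator for $\Omega_i^*$ conjugated by the relevant rigid motion, gives a linear operator with $\scE f = \sum_i\zeta_i f = f$ on $\Omega$ (property (1)), and the Leibniz rule plus invariance of Sobolev norms under rigid motions plus the stage-two estimate give $\|\scE f\|_{W^{k,p}(\R^n)} \lesssim \sum_i\|\zeta_i f\|_{W^{k,p}(\Omega)} \lesssim \|f\|_{W^{k,p}(\Omega)}$ (property (2)); the case $k=0$, $p=1$ then shows $\scE$ maps $L^1(\Omega)$ into $L^1(\R^n)$.
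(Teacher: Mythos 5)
Your outline is correct and is precisely the argument the paper relies on: the paper gives no proof of its own, deferring to Stein \cite{Stein70}, and your three stages (regularized distance plus the all-moments-vanishing weight $\psi$ for a special Lipschitz domain, the simultaneous $W^{k,p}$ bounds, and the partition-of-unity patching for a bounded Lipschitz domain) reproduce that cited construction faithfully.
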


The proof of this theorem can be found in \cite{Stein70}.

\section*{Acknowledgments}
The authors gratefully acknowledge the support of a SQuaRE grant from the American Institute of Mathematics, without which this project would not have been possible.
We all thank our late friend Thomas H. Otway for helpful discussions.
G.-Q. Chen was supported in part by the UK Engineering and Physical Sciences Research Council Award EP/L015811/1.
J. Clelland was supported in part by NSF grants DMS-0908456 and DMS-1206272.  
M. Slemrod was supported in part by Simons Collaborative Research Grant 232531 and a Visiting Senior Research Fellowship at Keble College (Oxford).  
D. Wang was supported in part by NSF grants DMS-1312800 and DMS-1613213.
D. Yang was supported in part by NSF grant DMS-1007347.

\nocite{Moser2}
\nocite{Otway12}

\bibliographystyle{amsplain}

\end{document}